\newtheorem{theorem}{Theorem}[section]    
\newtheorem{lemma}[theorem]{Lemma}          
\newtheorem{proposition}[theorem]{Proposition}  
\newtheorem{example-definition}[theorem]{Example-Definition} 
\newtheorem{corollary}[theorem]{Corollary} 
\theoremstyle{definition}
\newtheorem{definition}[theorem]{Definition}
\newtheorem{remark}[theorem]{Remark}  
\newtheorem{question}[theorem]{Question} 
\newtheorem{example}[theorem]{Example}    
\newtheorem*{remark-no-number}{Remark}             
\newtheorem*{definition*}{Definition}     
\newtheorem*{example*}{Example}     
\numberwithin{equation}{section}
\newcommand{\ri}{{\sf right}}
\newcommand{\Int}{{\sf int}}
\newcommand{\dep}{{\sf depth}}
\newcommand{\disj}{{\ \prec_{\sf disj}\ }}
\newcommand{\disjp}{{\ \prec_{\sf disj}^{\partial + P}\ }}
\newcommand{\Z}{\mathbb{Z}}
\newcommand{\Diff}{{\rm Diff}^+}
\newcommand{\MCG}{{\mathcal MCG}}
\newcommand{\A}{\mathcal{A}_{\mathcal B}(S, P)}
\newcommand{\AP}{\mathcal{A}^{\partial+P}_{\mathcal B}(S, P)}
\newcommand{\B}{\mathcal{B}}
\title[Twist left-veering open books]{
Twist left-veering open books, overtwistedness, looseness and virtual looseness
}
\author{Tetsuya Ito}
\address{Department of Mathematics, Kyoto University, Kyoto 606-8502, JAPAN}
\email{tetitoh@math.kyoto-u.ac.jp}
\author{Keiko Kawamuro}
\address{Department of Mathematics,   
The University of Iowa, Iowa City, IA 52242, USA}
\email{keiko-kawamuro@uiowa.edu}
\date{\today} 
\begin{document}

\begin{abstract} 
We introduce twist left-veering mapping classes of punctured surfaces. We prove that a twist left-veering open book supports an overtwisted contact structure and determine when the closed braid coming from the punctures is loose or virtually loose. 
\end{abstract}

\maketitle

\section{Introduction}

A pair $(S,\phi)$ of a compact oriented surface $S$ with boundary and diffeomorphism $\phi\in \Diff(S, \partial S)$ is called an {\em (abstract) open book}.  For an open book $(S,\phi)$ one obtains a contact 3-manifold $(M_{(S,\phi)},\xi_{(S,\phi)})$ \cite{TW}.
Open books play significant role in the study of contact structures thanks to the Giroux correspondence, \cite{Giroux}, see also \cite{Et, EV}:  
There is a one-to-one correspondence between the set of contact 3-manifolds up to isotopy and the set of open books up to stabilization and equivalence.
Here we say that two open books $(S,\phi)$ and $(S',\phi')$ are \emph{equivalent}  if $h\circ \phi=\phi' \circ h$ for some orientation preserving diffeomorphism $h: S\rightarrow S'$ fixing the boundary \cite{Et}.

Let $\mathcal T$ be a transverse link in $(M, \xi) \simeq (M_{(S, \phi)}, \phi_{(S, \phi)})$, where $\simeq$ means contactomorphic. According to Bennequin \cite{Ben} (when $(M,\xi)=(S^3,\xi_{std})$, the standard contact 3-sphere) and Pavelescu \cite{pav1, pav} (for general $(M,\xi)$), $\mathcal T$ is transversely isotopic to some closed $n$-braid $L$ with respect to the open book $(S,\phi)$.
Through the contactomorphism $(M, \xi) \simeq (M_{(S, \phi)}, \xi_{(S, \phi)})$ and braid isotopy, $L$ induces a set $P$ of $n$ points in $S$ near the boundary $\partial S$ and a diffeomorphism $\phi_L \in \Diff(S, P, \partial S)$ called the {\em distinguished monodromy} of $L$.

It is a fundamental problem to understand geometry and topology of $(M, \xi, \mathcal{T})$ in terms of the corresponding mapping classes $[\phi]\in\MCG(S)$ and $[\phi_L]\in \MCG(S, P)$. (In the following the bracket $[\cdot]$ will be omitted for simplicity, and a diffeomorphism and its mapping class will be denoted by the same symbol.)  
In this paper, we are particularly interested in detecting tight/overtwisted-ness of $(M, \xi)$ and non-loose/loose-ness of $\mathcal T$.
Three earlier works done by Goodman \cite{Goodman}, Wand \cite{Wand1}, Honda, Kazez and Mati\'c \cite{HKM} are relevant to our goal:

In \cite{Goodman} Goodman introduced a \emph{sobering arc} for an open book $(S,\phi)$ which is a properly embedded arc $\alpha$ in $S$ such that the intersection of $\alpha$ and its image $\phi(\alpha)$ satisfies certain  numerical conditions. 
He showed that {\em $(S,\phi)$ supports an overtwisted contact structure if and only if $(S, \phi)$ is stably equivalent to an open book $(S', \phi')$ admitting a sobering arc}. Here we say that two open books are \emph{stably equivalent} if they admit  stabilizations that are equivalent. 

In \cite{Wand1} Wand introduced an \emph{overtwisted region} in $(S,\phi)$, which can be seen as a generalization of Goodman's sobering arcs to arc systems.
It is a $2N$-gon formed by an $N$-arc system $\Gamma \subset S$ and its image $\phi(\Gamma)$ that satisfies certain conditions (in Definition \ref{def:OT-region}). Here, an $N$-arc system is a collection of pairwise disjoint $N$ arcs. He showed that {\em $(S,\phi)$ supports an overtwisted contact structure if and only if $\phi$ is inconsistent}; that is, there exist some arc system $\Gamma\subset S$ and \emph{stabilization} $(S', \phi')$ of $(S, \phi)$ such that $\Gamma$ and $\phi'(\Gamma)$ form an overtwisted region. This observation lead him to prove that Legendrian surgery preserves tightness.

There is an alternative refinement of Goodman's sobering arc criterion by Honda, Kazez and Mati\'c \cite{HKM}, which is the {\em non-right-veering} arc criterion. Instead of looking at the whole picture of $\gamma\cap \phi(\gamma)$ they found the importance of focussing on $\gamma\cup \phi(\gamma)$ restricted to a neighborhood of the boundary $\partial S$. 
They proved that {\em an open book is overtwisted if and only if it is stably equivalent to a non-right-veering open book.} 
Among the three criteria, the non-right-veering criterion is the most practically easy to check since it is essentially equivalent to the condition that the fractional Dehn twist coefficient (FDTC) is non-positive, see Propositions 3.1 and 3.2 in \cite{HKM}.

Note that the above three overtwistedness criteria are not at all claiming that every open book $(S,\phi)$ supporting an overtwisted contact structure admits sobering arcs, overtwisted regions or non-right-veering arcs. 

Also, in the three criteria, punctured open books are not considered. 
In \cite{IK-qveer} we studied open books $(S, \phi)$ with a set of punctures $P$ where $\phi$ is a diffeomorphism of $(S, P, \partial S)$, and we will continue the study of punctured open books in this paper. 
Since the punctures in $P$ can be permuted by $\phi$, it gives rise to a closed braid in the manifold $M_{(S, \phi)}$ that can be identified as a transverse link in the contact manifold $(M_{(S, \phi)}, \xi_{(S, \phi)})$. 
As a consequence, the overtwisted-structure detection problem is converted to a loose-link detection problem. 

It turned out that extending the results for non-punctured open books to punctured open books is not routine. 
Even though the notion of right-veering can naturally be extended to  punctured open books, the extended non-right-veering property does not imply looseness of the transverse link. To solve this problem, in \cite{IK-qveer} a notion of non-{\em quasi}-right-veering is introduced for punctured open books. This condition appears to be a `correct' generalization of non-right-veering as it implies looseness of the transverse link.

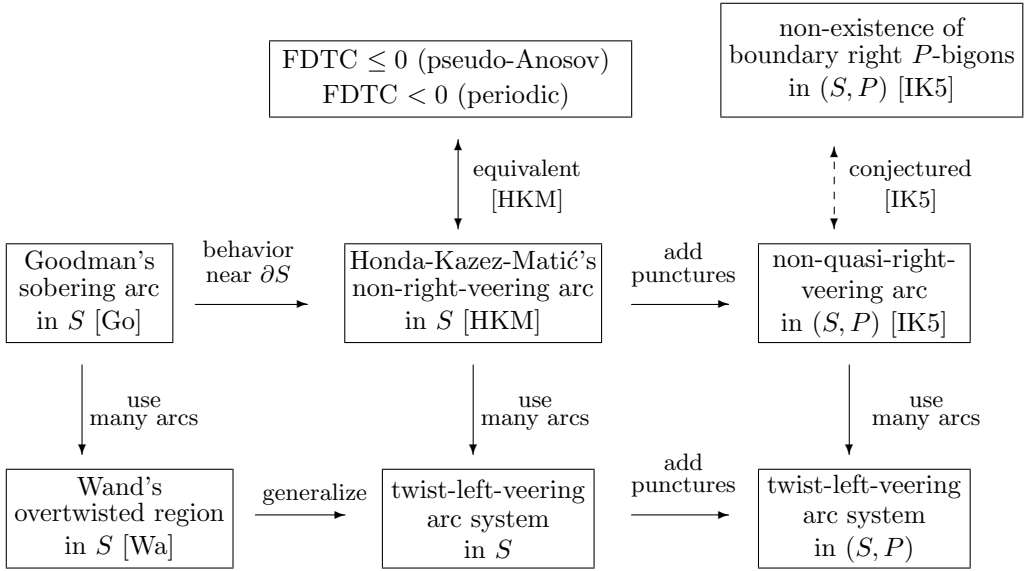
\begin{figure}[h]
\setlength{\unitlength}{1mm}
\begin{picture}(150,75)
\put(0,30){\framebox(22,13){\shortstack{Goodman's\\ sobering arc\\ in $S$ \cite{Goodman}}}}
\put(0,0){\framebox(30,13){\shortstack{Wand's\\overtwisted region\\ in $S$ 
\cite{Wand1}}}}
\put(45,30){\framebox(34,13){\shortstack{Honda-Kazez-Mati\'c's \\non-right-veering arc\\ in $S$
\cite{HKM}}}}
\put(100,30){\framebox(28,13){\shortstack{non-quasi-right- \\veering arc 
\\
in $(S, P)$ \cite{IK-qveer}
}}}
\put(50,0){\framebox(28,13){\shortstack{twist-left-veering\\ arc system \\ in $S$}}}
\put(100,0){\framebox(28,13){\shortstack{twist-left-veering\\ arc system \\in $(S,P)$}}}
\put(35,60){\framebox(48,10){\shortstack{ FDTC $\leq 0$ (pseudo-Anosov)\\ FDTC $<0$ (periodic)} } }
\put(95,60){\framebox(40,15){\shortstack{non-existence of \\
boundary right $P$-bigons \\
in $(S,P)$ \cite{IK-qveer}}}}
\put(60,45){\vector(0,1){12}}
\put(60,45){\vector(0,-1){0}}
\put(62,48){\small  \shortstack{equivalent \\ \cite{HKM}}}
\put(110,56){\vector(0,1){0}}
\put(110,45){\vector(0,-1){0}}
\multiput(110,45)(0,2){6}{\line(0,1){1}}
\put(112,48){\small  \shortstack{conjectured \\ \cite{IK-qveer}}}
\put(25,35){\vector(1,0){15}}
\put(83,35){\vector(1,0){13}}
\put(10,27){\vector(0,-1){12}}
\put(33,7){\vector(1,0){12}}
\put(83,7){\vector(1,0){13}}
\put(62,27){\vector(0,-1){12}}
\put(112,27){\vector(0,-1){12}}
\put(83,38){\small \shortstack{add\\ punctures}}
\put(63,19){\small \shortstack{use\\ many arcs}}
\put(26,38){\small \shortstack{behavior\\ near $\partial S$}}
\put(11,19){\small \shortstack{use\\ many arcs}}
\put(83,10){\small \shortstack{add\\ punctures}}
\put(34,9){\small \shortstack{generalize}}
\put(115,19){\small \shortstack{use\\ many arcs}}
\end{picture}
\caption{Criteria for overtwistedness and looseness.}
\label{fig:table of criteria}
\end{figure}

For non-quasi-right-veering, there is no numerical
characterization like the non-positive FDTC characterization for non-right-veering. 
However, a boundary right $P$-bigon, that is a certain punctured bigon at the boundary $\partial S$, plays a practically useful role to detect quasi-right-veering. In \cite{IK-qveer} it is conjectured that non-existence of boundary P-bigon is equivalent to non-quasi-right-veering.

In this paper, we extend 
the existing arc criteria in \cite{HKM, IK-qveer} to arc-system criteria. 
More precisely, we generalize the notion of non-(quasi)-right-veering to a notion of {\em twist-left-veering}. Our work may be understood as an analogue of Wand's generalization of Goodman's criterion, and also possibly can  be seen as a generalization of Wand's inconsistency criterion (Corollary~\ref{cor:stably-LV}). The schematic picture in Figure~\ref{fig:table of criteria} may be helpful. 

In \cite{IK-qveer} we introduced the \emph{right-veering ordering} $\prec_{\sf right}$ and the \emph{strong right-veering ordering} $\ll_\ri$ of arcs, see Definition~\ref{def:<<single-arc}. A mapping class $\phi \in\MCG(S, P)$ is called {\em non-right-veering} (resp. {\em non-quasi-right-veering}) if $\phi(\gamma)\prec_\ri \gamma$ (resp. $\phi(\gamma)\ll_\ri \gamma$) for some arc $\gamma$. Both orderings $\prec_\ri$ and $\ll_\ri$ can be naturally extended to $N$-arc systems.

Let $N\in \mathbb N$. 
If an $N$-arc system $\Gamma$ and its image $\phi(\Gamma)$ form a $2N$-gon then we call it a {\em boundary based region} and denoted it by $R(\Gamma, \phi(\Gamma))$ (see Definition~\ref{def-of-R} and Figure~\ref{fig:zu-boundary-twist}). 
If a $2N$-gon is formed then we define another $N$-arc system $\phi^{tw}(\Gamma)$ called the {\em left-twist} of $\Gamma$ by $\phi(\Gamma)$ (Definition~\ref{def:left-twist}). We have $\phi^{tw}(\Gamma) \prec_\ri \Gamma \prec_\ri \phi(\Gamma)$ in general. 
If no $2N$-gon is formed we define $\phi^{tw}(\Gamma):=\phi(\Gamma)$.
We say that $\phi$ is {\em $(N,k)$-twist left-veering} if $\phi^{tw}(\Gamma) \ll_\ri \Gamma$ for some $N$-arc system $\Gamma$ such that the boundary based region $R(\Gamma, \phi(\Gamma))$ contains $k$ puncture points. We also say 
that $\phi$ is $N$-twist left-veering (resp. twist left-veering) if $\phi$ is $(N,k)$-twist left-veering for some $k$ (resp. $N$ and $k$).

When $P=\emptyset$, $1$-twist left-veering and non-right-veering are equivalent, and when $P\neq \emptyset$, $(1,0)$-twist left-veering and non-quasi-right-veering are equivalent. 
Thus, twist left-veering is a generalization of non-right-veering and non-quasi-right-veering.  
Although someone might want to name it non-twist right-veering, we prefer twist left-veering to avoid the prefix `non-', and in fact, when $\phi^{tw}(\Gamma)\ll_\ri \Gamma$ the arc system $\phi^{tw}(\Gamma)$ is on the left of $\Gamma$ near the base points.

A contact 3-manifold $(M_{(S,\phi)},\xi_{(S,\phi)})$ is overtwisted if $\phi$ is non-right-veering \cite{HKM}. Similarly, we showed that a transverse link represented by a braid $L$ is loose (the complement is overtwisted) if the distinguished monodromy $\phi_L\in\MCG(S, P)$ is non-quasi-right-veering \cite{IK-qveer}. We generalize these results to $N$-twist left-veering.

\noindent {\bf Theorem~\ref{theorem:non-Nrv-OT}.} 
{\em
Let $L$ be a closed braid with respect to $(S, \phi)$ and $\phi_L \in \MCG(S,P)$ be its distinguished monodromy. 
\begin{enumerate}
\item If $\phi \in \MCG(S)$ is $N$-twist left-veering then $(M_{(S,\phi)},\xi_{(S,\phi)})$ is overtwisted, and there is an overtwisted disk that intersects the binding at $N$ points.
\item 
If $\phi_L \in \MCG(S, P)$ is $(N,k)$-twist left-veering then there is an overtwisted disk that intersects the binding at $N$ points and intersects the closed braid $L$ at $k$ points.
In particular, if $\phi_L$ is $(N,0)$-twist left-veering then $L$ is loose.

\end{enumerate}
}

For a transverse link $\mathcal{T}$ in an overtwisted contact 3-manifold $(M,\xi)$, the {\em depth}  of $\mathcal T$, $\dep(\mathcal{T};M)$,  is the minimum number of intersection of an overtwisted disk in $M$ and $\mathcal{T}$. It was  introduced by Baker and Onaran \cite{BO} and measures {\em non-looseness} of $\mathcal{T}$. 
As an application of Theorem~\ref{theorem:non-Nrv-OT} we advance the study of the depth:

\noindent {\bf Corollary~\ref{cor:depth}.} 
{\em
Let $(S,\phi)$ be an open book supporting an overtwisted contact structure, $L$ be a closed braid in $M_{(S,\phi)}$, and $B_{(S,\phi)}$ be the binding. 
\begin{itemize}
\item[(a)] $\dep(B_{(S,\phi)};M_{(S, \phi)})\leq  \min\{N\: | \: \phi \mbox{ is } N\mbox{-twist left-veering}\}$
\item[(b)] $\dep(B_{(S, \phi)};M_{(S, \phi)}\setminus L) \leq  \min\{N\: | \: \phi_L \mbox{ is } (N,0)\mbox{-twist left-veering} \}$
\item[(c)] $\dep(L;M_{(S, \phi)})\leq \min\{k\: | \: \phi_L \mbox{ is } (N,k)\mbox{-twist left-veering for some }N\}$
\item[(d)] $\dep(L\cup B_{(S,\phi)};M_{(S, \phi)})\leq \min\{N+k\: | \: \phi_L \mbox{ is } (N,k)\mbox{-twist left-veering}\}$
\end{itemize}
}

In \cite{IK2, IK-cover} we showed that the inequalities (a),(b) and (d) in Corollary ~\ref{cor:depth} become equalities when the depth is one;
\begin{itemize}
\item $\dep(B_{(S,\phi)};M_{(S,\phi)})=1$ 
if and only if $\phi$ is non-right-veering (i.e. $1$-twist left-veering), 
and 
\item 
$\dep(L\cup B_{(S,\phi)};M_{(S,\phi)})=\dep(B_{(S,\phi)};M_{(S,\phi)} \setminus L) = 1$ if and only if $\phi_L$ is non-quasi-right-veering (i.e., $(1,0)$-twist left-veering). 
\end{itemize}

We extend this result to depth $2$: 

\noindent {\bf Theorem~\ref{prop:depth2}.} 
{\em
Suppose that $\xi_{(S, \phi)}$ is overtwisted. Let $L$ be a closed braid in $M_{(S, \phi)}.$
\begin{itemize}
\item
$\dep( B_{(S,\phi)};M_{(S, \phi)})= 2$ if and only if $\phi$ is right-veering and 2-twist left-veering.
\item
$\dep(B_{(S,\phi)};M_{(S, \phi)} \setminus L)= 2$ if and only if $\phi_L$ is 
quasi-right-veering 
and $(2,0)$-twist left-veering.
\end{itemize}
}

Our boundary based region $R(\Gamma, \phi(\Gamma))$ and Wand's overtwisted region share 
some common properties. See Figure~\ref{comparison-table} for comparison.  
The following theorem shows 
exactly when they coincide.  
\begin{figure}[h]
\begin{tabular}{|l|c|c|} \hline
&{\it overtwisted region} & {\it boundary based region of} \\ 
& & {\it twist-left-veering monodromy} \\ \hline
$2N$-gon & yes & yes \\ \hline
corners alternate between $\partial S$ and $\Int(S)$ & yes & yes \\ \hline
edges alternate $\Gamma$ and $\phi(\Gamma)$ & yes & yes \\ \hline
unique & yes & yes \\ \hline  
embedded {\tiny (Definition~\ref{def-of-R})} & yes & not required \\ \hline
$\Int(\Gamma)\cap\Int(\phi(\Gamma)) \stackrel{?}{=}$ non-base corners of $R$ & yes & allowed to be $\subseteq$ \\ \hline
punctures allowed & no & yes \\ \hline
interpolating arc systems used & no & yes \\ \hline
\end{tabular}
\caption{Comparison of overtwisted region and boundary based region.}
\label{comparison-table}
\end{figure}

\noindent {\bf Theorem~\ref{thm:R=A}.} 
{\em
Let $\Gamma$ be an $N$-arc system with $N\geq 2$ such that the boundary based region $R(\Gamma,\phi(\Gamma))$ exists.  
Then  $R(\Gamma,\phi(\Gamma))$ is an overtwisted region if and only if
$\phi^{tw}(\Gamma) \ll_\ri \Gamma$, $\Int(\phi^{tw}(\Gamma))\cap\Int(\Gamma)=\emptyset$, and 
$R(\Gamma,\phi(\Gamma))$ is embedded. 
{\rm (A parallel statement holds for $N=1$, see Proposition~\ref{prop:R=A}.)}
}

In particular, by Theorem~\ref{theorem:non-Nrv-OT} and Wand's inconsistency criterion we conclude: 

\noindent {\bf Corollary~\ref{cor:stably-LV}.} 
{\em
An open book $(S,\phi)$ supports an overtwisted contact structure if and only if a stabilization of $(S,\phi)$ is $N$-twist left-veering for some $N$.
}

Note that in Corollary~\ref{cor:stably-LV} destabilizations are not required. 
This makes a sharp contrast to the non-right-veering criterion of Honda, Kazez and Mati\'c, which states that an open book $(S, \phi)$ is overtwisted if and only if it is stably equivalent to some non-right-veering open book $(S', \phi')$. 
That is, $(S, \phi)$ and $(S', \phi')$ are related to each other by a sequence of stabilizations and destabilizations (because every open book becomes right-veering after a number of stabilizations \cite[Proposition 6.1]{HKM}).

Theorem~\ref{theorem:non-Nrv-OT} also has an application to the study of contact cyclic branched covers. 
So far, we have been requiring arc systems end at $\partial S$. 
If we allow arc systems end at $\partial S \cup P$ instead, then we can define a similar ordering that we denote by $\ll_\ri^{\partial+P}$.  
It turned out that $\phi^{tw}_L(\Gamma) \ll^{\partial + P}_{\ri} \Gamma$ is a weaker condition than $\phi^{tw}_L(\Gamma) \ll_{\ri} \Gamma$. 
Using this weaker ordering $\ll_\ri^{\partial+P}$ in the place of $\ll_\ri$, we define a notion of \emph{weakly $(N,k)$-twist left-veering}.
We show that weakly $(N,0)$-twist left-veering guarantees virtual looseness of the closed braid $L$ as stated in Theorem~\ref {theorem:virtually-loose}.

\noindent {\bf Theorem~\ref{theorem:virtually-loose}.} 
{\em
Let $L$ be a closed braid with respect to an open book $(S, \phi)$. If $\phi_L$ is weakly $(N, 0)$-twist left-veering; namely, there is an $N$-arc system $\Gamma \in \A$ such that 
\[ \phi^{tw}_L(\Gamma) \ll^{\partial + P}_{\ri} \Gamma \mbox{ in } \AP\]
and the associated boundary based region $R(\Gamma, \phi(\Gamma))$ is non-punctured, then $L$ is virtually loose. 
}

Figure \ref{table:summary2} below would help us to systematically understand four results on looseness and virtually looseness. 

\begin{figure}[h]
\begin{center}
\begin{tabular}{|c|c|} 
\hline
single arc $\gamma$ & arc system $\Gamma$ 
\\ \hline
& 
\\
\cite[Theorem 4.1]{IK-qveer} & Theorem~\ref{theorem:non-Nrv-OT} 
\\
& 
\\
$\phi_L(\gamma) \ll_\ri \gamma$ (non-quasi-rightveering) & $\phi^{tw}_L(\Gamma) \ll_\ri \Gamma$ with non-punctured $R(\Gamma, \phi_L(\Gamma))$ 
\\ 
& 
($(1,0)$-twist left-veering)
\\
&
\\
$\Rightarrow$  $L$ is loose & $\Rightarrow$  $L$ is loose
\\
& 
\\ \hline
&
\\
\cite[Corollary 5.7]{IK-branch} & Theorem~\ref{theorem:virtually-loose}
\\
& 
\\
$\phi_L(\gamma) \prec_\ri \gamma$  (non-rightveering) & $\phi^{tw}_L(\Gamma) \ll_\ri^{\partial + P} \Gamma$ with non-punctured $R(\Gamma, \phi_L(\Gamma))$
\\ 
& (weakly $(N,0)$-twist left-veerting)
\\
&
\\
$\Rightarrow L$ is virtually loose & $\Rightarrow L$ is virtually loose
\\
&
\\
\hline
\end{tabular}
\caption{Four results on looseness and virtual looseness of transverse links} 
\label{table:summary2}
\end{center}
\end{figure}

In Figure \ref{table:summary3} below definitions of various left-veering type notions are summarized. 

\begin{figure}[h]
\begin{center}
\begin{tabular}{|c|c|c|} 
\hline
Notion & Definition & Property \\
\hline
& $\exists$ arc $\gamma$ s.t. & Overtwisted \ ($P=\emptyset$)\\
Non-right-veering & $\phi(\gamma) \prec_{\ri} \gamma$ & 
\\
& ($\phi_L(\gamma) \prec_{\ri} \gamma$) & Virtually loose  \ ($P \neq \emptyset$)\\
\hline
 &  $\exists$ arc $\gamma$ s.t. &\\
 Non-quasi-right-veering & $\phi_L(\gamma) \ll_{\ri} \gamma$ & Loose\\
 & &\\
\hline
 &  $\exists$ $N$-arc system $\Gamma$ s.t. &\\
$N$-twist-left-veering  & $\phi^{tw}(\Gamma) \ll_{\ri} \Gamma$ & Overtwisted\\
 & &\\
\hline
 &  $\exists$ $N$-arc system $\Gamma$ s.t. &\\
$(N,0)$-twist-left-veering  & $\phi_L^{tw}(\Gamma) \ll_{\ri} \Gamma$, & Loose \\
&  $R(\Gamma,\phi_L(\Gamma))$ contains no punctures&\\
\hline
 &  $\exists$ $N$-arc system $\Gamma$ s.t. &\\
Weakly $(N,0)$-twist-left-veering  & $\phi_L^{tw}(\Gamma) \ll_{\ri}^{\partial + P} \Gamma$, & Virtually loose \\
 &$R(\Gamma,\phi_L(\Gamma))$ contains no punctures &\\
\hline
\end{tabular}
\caption{Summary of definitions of various left-veering notions and relation to overtwisted and loose properties.} 
\label{table:summary3}
\end{center}
\end{figure}

The paper is organized as follows. 
In Section~\ref{section:preliminary} we recall basic definitions including the two orderings $\prec_\ri$ and $\ll_\ri$ of arcs in $S$ and non-quasi-right-veering monodromies.  
In Section~\ref{section3} we extend the definition of the two orderings to $N$-arc systems. 
We further define a boundary based region $R(\Gamma, \Gamma')$, left twist $\phi^{tw}(\Gamma)$ of $\Gamma$ and finally introduce the notion of $(N,k)$-twist left-veering.  
In Section~\ref{section4} we prove the main result Theorem~\ref{theorem:non-Nrv-OT}. 
In Section~\ref{section5} we discuss applications of Theorem~\ref{theorem:non-Nrv-OT} to the depths of transverse links. 
In Section~\ref{section6}, relation between $N$-twist left veering and inconsistency is discussed.
In Section~\ref{section7} we introduce $\ll_\ri^{\partial + P}$ and prove Theorem~\ref{theorem:virtually-loose}.

\section*{Acknowledgement}
T.I is partially supported by JSPS KAKENHI Grant Numbers 19K03490, 16H02145. KK was partially supported by Simons Foundation Collaboration Grants for Mathematicians. 

\section{Preliminaries}\label{section:preliminary}  

\subsection{Open books and closed braids}\label{section2.1}

We review the relation between open books and contact 3-manifolds, and the relation between closed braids and transverse links.

Let $S$ be an oriented compact surface with non-empty boundary and $P = \{p_1,\ldots,p_n\}$ be a (possibly empty) set of $n$ distinct interior points of $S$. 
The mapping class group $\MCG(S,P)$ is the group of isotopy classes of orientation preserving diffeomorphisms of the surface $S$ fixing $\partial S$ pointwise and fixing $P$ setwise. In the following, $\MCG(S)=\MCG(S,\emptyset)$.

An \emph{(abstract) open book} is a pair $(S,\phi)$ of a surface $S$ and a diffeomorphism $\phi\in\Diff(S, \partial S)$ (or a mapping class $\phi \in \MCG(S)$). Throughout, for simplicity a diffeomorphism and its mapping class are denoted by the same symbol if no confusions occur. 

For an open book $(S,\phi)$ let $M_{(S,\phi)}$ be a closed 3-manifold defined  by
\begin{equation}\label{def of M_(S phi)}
M_{(S,\phi)}:= S \times[0,1] \slash \sim 
\end{equation}
where $\sim$ is the equivalence relation defined by 
\[ 
(x,1)\sim(\phi(x),0) \ \ \mbox{for all } x \in S, \quad (x,t)\sim (x,s) \ \ \mbox{for all } x \in \partial S \mbox{ and }  t,s \in [0,1].
\] 
We denote the quotient map by $\Pi:S\times[0,1] \rightarrow M_{(S,\phi)}=S \times[0,1] \slash \sim$. 
The \emph{binding} $B$ is defined by 
$$
B=B_{(S, \phi)}:= \Pi(\partial S \times \{t\}) \subset M_{(S,\phi)}
$$
which does not depend on $t\in[0,1]$ since $\phi$ is identity near $\partial S$.  The binding $B$ is a fibered link in $M_{(S,\phi)}$ with natural fibration 
\[
\begin{array}{cccc}
\pi=\pi_{(S,\phi)} : & M_{(S,\phi)} \setminus B_{(S,\phi)} &\longrightarrow & S^{1}=[0,1]\slash 0 \sim 1\\
& \rotatebox{90}{$\in$} & &\rotatebox{90}{$\in$}\\
& \Pi(x, t)   & \longmapsto & t
\end{array}
\]
For each $t \in [0,1]$ the closure of the fiber $S_t:=\overline{\pi^{-1}(t)}$ is called a \emph{page} of the open book. Note that $S_0$ and $S_1$ represent the same page in $M_{(S, \phi)}$. 

For $t \in (0,1)$ let $p_t=\Pi^{-1}|_{S_t}: S_t \to S\times \{t\}=S$ be the canonical diffeomorphism arising from the definition (\ref{def of M_(S phi)}). To define $p_0:S_0 \rightarrow S$, for $x \in S_0$ we choose a sequence of points $\{x_n\in S_{t_n} \subset M_{(S,\phi)}\: | \: 0<t_{n+1}<t_n\}$ that converges to $x$, and define $p_{0}(x)=\lim_{n \to \infty} p_{t_n}(x_n)$ (this is well-defined and independent of a choice of $\{x_n\}$). The diffeomorphism $p_1:S_1 \rightarrow S$ is defined similarly. Thus, $p_0(x)=\phi \circ p_1(x)$ if $x \in S_0=S_1\subset M_{(S, \phi)}$.

A contact structure $\xi=\ker\,\alpha$ on $M_{(S,\phi)}$ is \emph{supported} by the open book $(B_{(S, \phi)}, \pi)$ if $d\alpha>0$ on every page $S_t$ and $\alpha>0$ on $B_{(S, \phi)}$. 
Up to isotopy there exists a unique contact structure $\xi_{(S,\phi)}$ on $M_{(S,\phi)}$ supported by $(B_{(S, \phi)}, \pi)$ \cite{Giroux}. 
If $(M, \xi)$ and $(M_{(S,\phi)},\xi_{(S,\phi)})$ are contactomorphic then we say that $(M, \xi)$ is supported by the open book $(S,\phi)$ or, $(S,\phi)$ is an open book of $(M, \xi)$.

A link $L$ in $M_{(S,\phi)}$ is a {\em closed braid} with respect to $(S,\phi)$ if $L \subset M_{(S,\phi)} \setminus B_{(S, \phi)}$ and $L$ is positively transverse to every page. 
For a fixed $(S, \phi)$ two closed braids $L_0$ and $L_1$ are called \emph{braid isotopic} if they are isotopic through a continuous family $\{L_t\}_{0\leq t \leq 1}$ of closed braids with respect to $(S, \phi)$.

Suppose that $(M, \xi)$ is supported by $(S, \phi)$. 
We say that a transverse link $\mathcal T$ in $(M, \xi)$ is {\em represented by} a closed braid $L$ with respect to $(S, \phi)$ if a contactomorphism $(M_{(S,\phi)},\xi_{(S,\phi)}) \to (M, \xi)$ takes $L$ to $\mathcal T$ (up to transverse isotopy). 
In fact, due to Bennequin \cite{Ben} and Pavelescu \cite{pav1, pav}, given a contact manifold $(M, \xi)$ and its open book $(S, \phi)$, every transverse link $\mathcal T$ in $(M, \xi)$ is represented by some closed braid $L$ with respect to $(S, \phi)$ and such $L$ is unique up to braid isotopy, positive braid stabilization and positive braid destabilization.

Let $L \subset M_{(S,\phi)}$ be a closed braid with respect to $(S,\phi)$. Take a collar neighborhood $\nu(\partial S)$ so that $\phi|_{\nu(\partial S)}=id_{\nu(\partial S)}$, and move $L$ by braid isotopy so that $P:= p_0(L\cap S_0) \subset \nu(\partial S)$. Then $\phi$ is regarded as a diffeomorphism $(S,P)\rightarrow (S,P)$ hence it gives an element $j(\phi) \in \MCG(S,P)$.
By cutting $M_{(S,\phi)}$ along the page $S_0$ we get a cylinder $S\times[0,1]$ and $L$ gives rise to an element $\beta_L$ of the surface braid group $B_n(S)$. We define the the \emph{distinguished monodromy} of $L$ by 
\[ \phi_L=j(\phi)\ i(\beta_L)  \]
where $i$ is the push map in the generalized Birman exact sequence \cite[Theorem 9.1]{FM}
\begin{equation*}
\label{eqn:Birman}
1 \rightarrow B_n(S)  \stackrel{i}{\rightarrow} \MCG(S,P)  \stackrel{f}{\rightarrow} \MCG(S) \rightarrow 1.
\end{equation*}

The distinguished monodromy $\phi_L$ is well-defined up to point-changing isomorphism \cite{IK-qveer}: 
If two closed braids $L$ and $L'$ with respect to $(S, \phi)$ are braid isotopic, then there is a point-changing isomorphism $\Theta: \MCG(S,P)\rightarrow \MCG(S,P')$ where $P=p_0(S_0\cap L)$, $P'=p_0(S_0\cap L')$ such that $\Theta(\phi_L)=\phi_{L'}$. 
Here the point-changing isomorphism $\Theta$ is an isomorphism defined by $\Theta([\psi])= [\theta^{-1} \circ \psi \circ \theta] $ for some orientation-preserving diffeomorphism $\theta: (S, P')\to (S, P)$ such that $\theta|_{\partial S}=id_{\partial S}$ and $\theta$ is isotopic to $id_S$ if we forget the sets of marked points $P$ and $P'$. When $P=P'$, this simply means that $\phi_L$ and $\phi_{L'}$ are conjugate in $\MCG(S,P)$.

\subsection{Strong right-veering ordering and quasi-right-veering}

We review the right-veering orderings $\prec_\ri$ and $\ll_\ri$ of arcs and the definition of quasi-right-veering. 

Take a base point $v \in \partial S$.
Let $\mathcal{A}_{v}(S,P)$ be the set of isotopy classes of oriented properly embedded arcs $\gamma:[0,1] \rightarrow S \setminus P$ satisfying $\gamma(0) = v$ and $\gamma(1) \in \partial S \setminus \{v\}$. Here we allow arcs to be boundary-parallel and by an isotopy we mean isotopy fixing $\partial S$. Let $\underline{\gamma}$ denote the arc $\gamma$ with the reversed orientation so that $\underline{\gamma}(t)=\gamma(1-t)$.

We call an element of $\mathcal{A}_{v}(S,P)$ an \emph{arc} based on $b$.
Abusing the notation, by an arc $\gamma \in \mathcal{A}_{v}(S,P)$ we will mean three different objects.
\begin{itemize}
\item a map $\gamma: [0,1] \rightarrow S \setminus P$,
\item The image of the map (viewed as a submanifold) $\gamma([0,1]) \subset S \setminus P$.
\item The isotopy class of the submanifold $\gamma$. 
\end{itemize}

We say that arcs $\alpha$ and $\beta$ intersect \emph{efficiently} if they realize the geometric intersection number. 
We will always assume that all arcs intersect pairwise efficiently. In particular, when $\alpha$ and $\beta$ have the same isotopy class then $\Int(\alpha)\cap \Int(\beta) = \emptyset$.

The following orderings are important in 3-dimensional contact topology.

\begin{definition}[Right-veering ordering $\prec_{\sf right}$]
\label{def:<single-arc}
For arcs $\alpha,\beta \in \mathcal{A}_{v}(S,P)$, we denote $\alpha \prec_{\sf right} \beta$ if $\alpha \neq \beta$ and the arc $\beta$ lies on the right side of $\alpha$ in a small neighborhood of the base point $v$. 
\end{definition}

\begin{definition}[Strong right-veering ordering $\ll_\ri$ \cite{IK-qveer}] 
\label{def:<<single-arc}
For arcs $\alpha,\alpha' \in \mathcal{A}_{v}(S,P)$, we denote $\alpha\disj\alpha'$ if $\alpha\prec_\ri\alpha'$ and $\alpha\cap\alpha'=\{v\}$. 
We denote
$\alpha \ll_{\sf right} \alpha'$ if $\alpha \neq \alpha'$ and there exists a sequence of arcs $\alpha_0,\ldots,\alpha_k \in \mathcal{A}_{v}(S,P)$ such that 
\begin{equation}
\label{eqn:sequence-<<singlearc}
\alpha = \alpha_0 \disj \alpha_1 \disj  \cdots \disj \alpha_k = \alpha'.
\end{equation}
\end{definition}

The relation $\prec_\ri$ is a strict total order and $\ll_\ri$ is a strict partial order (strict in the sense that it is irreflexive; $a \not\prec_{\ri} a$ for all $a$). The relation $\disj$ is not a strict partial order since it is not transitive; $a \disj b$ and $b \disj c \not\Rightarrow a \disj c$.

\begin{remark}
The definitions of $\ll_{\ri}$ in Definition \ref{def:<<single-arc} and \cite{IK-qveer} are slightly different. 
In this paper for the sequence of arcs (\ref{eqn:sequence-<<singlearc}) we require $\alpha_i \cap \alpha_{i+1}=\{v\}$ (so $\alpha_{i}$ and $\alpha_{i+1}$ only shares the common base point $v$), whereas in \cite{IK-qveer} we require a weaker condition $\Int(\alpha_i) \cap \Int(\alpha_{i+1})=\emptyset$ (so we allow $\alpha_{i}$ and $\alpha_{i+1}$ have the same start point and end point).

However, as for the definition of quasi-right-veering, Definition \ref{def:not-quasi-right-veering} below and the one given in \cite{IK-qveer} are equivalent. This is because when $\alpha$ and $\beta$ share the same endpoint, we can perturb the endpoints of arcs to make them distinct as long as $\alpha \cup \beta$ does not cut a punctured disk (bigon).
\end{remark}

When $P=\emptyset$ the orderings $\prec_{\sf right}$ and $\ll_{\sf right}$ are the same, which can be proved using a work of Honda, Kazez and Mati\'c \cite[Lemma 4.1]{HKM}. However, when $P\neq \emptyset$, $\prec_{\sf right}$ and $\ll_{\sf right}$ are different as studied in \cite{IK-qveer}.

The mapping class group $\MCG(S,P)$ acts on $\mathcal{A}_{v}(S,P)$ and the action preserves both $\prec_{\sf right}$ and $\ll_{\sf right}$. Using the total ordering $\prec_{\sf right}$ the right-veering property is defined as follows.

\begin{definition}[Right-veering \cite{HKM}]
\label{def:right-veering}
An element $\phi\in \MCG(S,P)$ is \emph{right-veering} if for any $v \in \partial S$ and any arc $\alpha \in \mathcal{A}_v(S,P)$, $\alpha \preceq_{\sf right} \phi(\alpha)$; that is, $\alpha \prec_{\sf right} \phi(\alpha)$ or $\alpha = \phi(\alpha)$.
\end{definition}

For our purpose, however, it is much more convenient to accept \emph{non-right-veering} as the basic concept. We view right-veering as {\em not non-right-veering}. Definition~\ref{def:right-veering} is rephrased as follows: 

\begin{definition}[Non-right-veering]
\label{def:not-right-veering}
An element $\phi\in \MCG(S,P)$ is \emph{non-right-veering} if there exists $v \in \partial S$ and an arc $\alpha \in \mathcal{A}_v(S,P)$ such that $\phi(\alpha) \prec_{\sf right}\alpha$. Otherwise, we say that $\phi$ is \emph{right-veering}.
\end{definition}

Using the strong right-veering ordering $\ll_{\sf right}$ in the place of   $\prec_{\sf right}$, we define quasi-right-veering.

\begin{definition}[Non-quasi-right-veering \cite{IK-qveer}]
\label{def:not-quasi-right-veering}
An element $\phi \in \MCG(S,P)$ is \emph{non-quasi-right-veering} if there exists $v \in \partial S$ and an arc $\alpha \in \mathcal{A}_v(S,P)$ such that $\phi(\alpha) \ll_{\sf right}\alpha$. 
Otherwise, we say that $\phi$ is \emph{quasi-right-veering}.
\end{definition}

Since $\ll_{\sf right}$ and $\prec_{\sf right}$ are the same when $P=\emptyset$, $\phi \in \MCG(S)$ is right-veering if and only if it is quasi-right-veering. On the other hand, when $P\neq \emptyset$, $\phi$ is quasi-right-veering if $\phi$ is right-veering \cite[Proposition 3.14]{IK-qveer}, but the converse is not true in general.

The following is a list of properties of non-quasi-rightveering.

\begin{theorem}\label{thm:previous}
Let $(M, \xi)$ be a closed contact 3-manifold and $\mathcal T$ be a transverse link in $(M, \xi)$. 
\begin{enumerate}
\item 

$(M, \xi)$ is overtwisted if and only if there exists a non-right-veering open book $(S, \phi)$ that supports $(M, \xi)$.  
\cite{HKM} 

\item 

$\mathcal T$ is loose if and only if there exist an open book $(S, \phi)$ supporting $(M, \xi)$ and a closed braid representative $L$ of $\mathcal T$ with respect to $(S, \phi)$  such that the distinguished monodromy $\phi_L$ is non-quasi-right-veering. \cite{IK-qveer} 

\item 

If $\mathcal T$ is represented by a closed braid $L$ with respect to an open book $(S, \phi)$ whose distinguished monodromy $\phi_L$ is non-right-veering then $\mathcal T$ is virtually loose; that is, there is a finite covering of $M \setminus \mathcal T$ on which the lifted contact structure is overtwisted.  \cite{IK-branch} 
\end{enumerate}
\end{theorem}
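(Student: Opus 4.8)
The plan is short: all three parts are theorems already in the literature, so I would prove Theorem~\ref{thm:previous} simply by citing them, and then recall the shape of each argument for orientation. Part (1) is the Honda--Kazez--Mati\'c criterion \cite{HKM}; part (2) is \cite[Theorem 4.1]{IK-qveer}; part (3) is \cite[Corollary 5.7]{IK-branch}.

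For part (1) I would note that ``there exists a non-right-veering open book supporting $(M,\xi)$ $\Rightarrow$ $(M,\xi)$ overtwisted'' is the contrapositive of the main theorem of \cite{HKM}, namely that every open book supporting a tight structure is right-veering; the converse ``overtwisted $\Rightarrow$ there exists a non-right-veering open book'' is obtained from Goodman's sobering-arc characterization of overtwistedness \cite{Goodman}, using that a sobering arc can be arranged to be non-right-veering after a stabilization.

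For part (2) I would cite \cite[Theorem 4.1]{IK-qveer} for the substantive ``if'' direction: a non-quasi-right-veering distinguished monodromy $\phi_L$ is used to construct an overtwisted disk in $M$ disjoint from $L$ (the condition $\phi_L(\alpha)\ll_\ri\alpha$, as opposed to merely $\phi_L(\alpha)\prec_\ri\alpha$, is exactly what lets the disk be pushed off the puncture set $P$), so $\mathcal T$ is loose; the ``only if'' direction applies a relative version of part (1) to an open book of the complement $M\setminus\mathcal T$. I would emphasize the one delicate point -- keeping the overtwisted disk away from $P$ -- since that is precisely the single-arc construction which Theorem~\ref{theorem:non-Nrv-OT} below upgrades to $N$-arc systems. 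For part (3) I would cite \cite[Corollary 5.7]{IK-branch}: starting from a non-right-veering $\phi_L$ one forms a cyclic branched cover of $M$ whose branch locus contains $\mathcal T$, chosen so that the lifted open book carries a non-right-veering monodromy (indeed with negative fractional Dehn twist coefficient on a boundary component), and then part (1) applied upstairs makes the lifted contact structure overtwisted on the resulting finite cover of $M\setminus\mathcal T$, i.e.\ $\mathcal T$ is virtually loose.

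There is no genuine obstacle in assembling these citations. The conceptual point worth flagging -- and the motivation for everything that follows -- is that none of (1)--(3) asserts that an arbitrary overtwisted, loose, or virtually loose example is detected by a \emph{single} arc; the twist-left-veering arc-system criteria developed in the rest of the paper are designed precisely to shrink that gap.
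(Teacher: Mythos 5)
Your proposal is correct and matches the paper's approach exactly: Theorem~\ref{thm:previous} is stated in the paper as a survey of prior results with no proof given beyond citing \cite{HKM}, \cite{IK-qveer}, and \cite{IK-branch}, which is precisely what you do. One small caveat on your background sketch for part~(1): in \cite{HKM} the easy implication (non-right-veering $\Rightarrow$ overtwisted) is the one that passes through Goodman's sobering-arc criterion \cite{Goodman}, whereas the direction overtwisted $\Rightarrow$ some supporting open book is non-right-veering is obtained via a Lutz-twist argument rather than from sobering arcs, but this does not affect the validity of citing \cite{HKM} for the full equivalence.
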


\section{Right-veering ordering on arc systems and $N$-twist left-veering}
\label{section3}

\subsection{Right-veering ordering on arc systems}

In this section we extend the strong right-veering ordering $\ll_\ri$ to  arc systems.

\begin{definition}[$N$-arc system]
\label{defn:Narcsystem}
Let $\mathcal{B}=\{v_1,\ldots,v_N\} \subset \partial S$ be distinct $N$ boundary points. Let 
$$ 
\mathcal{A}_{\mathcal{B}}(S,P) = 
\left\{\Gamma=(\gamma_1,\ldots,\gamma_N) \in \mathcal{A}_{v_1}(S, P)\times \cdots \times \mathcal{A}_{v_N}(S,P)\: | \:
 \gamma_i \cap \gamma_j = \emptyset \ \ (i \neq j) 
\right\}. 
$$ 
We call an element of $\mathcal{A}_{\mathcal{B}}(S, P)$ an \emph{$N$-arc system} of $(S, P)$ based on $\mathcal{B}$. 
\end{definition}

We may abuse the symbol $\Gamma$ for different objects such as:
\begin{itemize}
\item A collection of maps $\{\gamma_i:[0,1]\to S \ | \ i=1,\dots N \}$. 
\item The submanifold $\gamma_1 \cup \gamma_2 \cup \cdots \cup \gamma_N \subset S$
\item The isotopy class (rel. $\partial S$) of the submanifold $\gamma_1 \cup \gamma_2 \cup \cdots \cup \gamma_N \subset S$.
\end{itemize}
We denote $\Gamma(1):=\{\gamma_1(1), \gamma_2(1), \ldots, \gamma_N(1)\}=\partial \Gamma \setminus \mathcal B.$

We naturally generalize $\prec_{\sf right}$ and $\ll_{\sf right}$ of Definitions~\ref{def:<single-arc} and \ref{def:<<single-arc} as follows:

\begin{definition}[Right-veering ordering $\prec_{\sf right}$]
For $N$-arc systems $\Gamma=(\gamma_1,\ldots,\gamma_N)$ and $\Gamma'=(\gamma'_1,\ldots,\gamma'_N) \in  \A$, we denote $\Gamma \prec_{\sf right} \Gamma'$ if and only if $\Gamma \neq \Gamma'$ and $\gamma_i \preceq_{\sf right} \gamma'_i$ for all $i=1,\ldots,N$. 
\end{definition}

\begin{definition}[Strong right-veering ordering $\ll_\ri$]\label{def:SO} 
Let $\Gamma$ and $\Gamma' \in  \A$.
\begin{itemize}
\item
We denote $\Gamma \disj \Gamma'$ if $\Gamma \prec_\ri \Gamma'$ and $\Gamma\cap\Gamma'=\B.$ 
\item
We denote $\Gamma \ll_\ri \Gamma'$ if $\Gamma \neq \Gamma'$ and there exists a finite sequence of $N$-arc systems $\Gamma_0,\dots,\Gamma_k \in \A$ such that 
$\Gamma = \Gamma_0 \disj \Gamma_1 \disj  \cdots \disj \Gamma_k = \Gamma'. 
$
\end{itemize}
\end{definition}

While $\ll_\ri$ is an strict partial ordering, $\disj$ is not.

\begin{example}
See Figure \ref{fig:arc-order}. 
Sketch (i) depicts single arcs $\alpha$ and $\beta$ satisfying $\alpha\ll_{\sf right} \beta.$
Sketch (ii) shows $(\alpha_1,\alpha_2) \disj(\gamma_1,\gamma_2)\disj (\beta_1,\beta_2)$; thus, $(\alpha_1,\alpha_2) \ll_{\sf right} (\beta_1,\beta_2)$.
\begin{figure}[htbp]
\includegraphics*[width=100mm,bb=152 583 447 716]{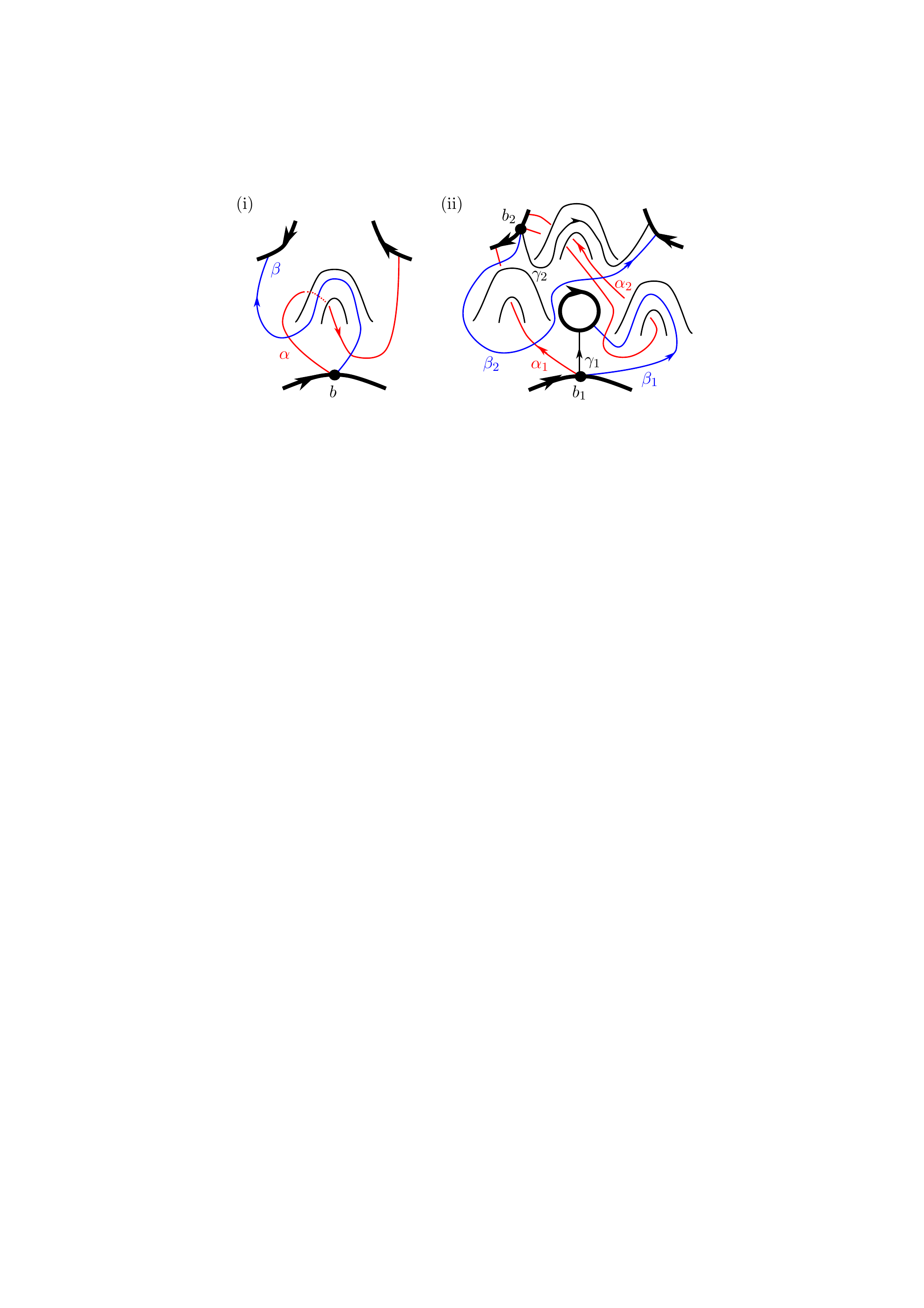}
\caption{(i) $N=1$ and $\alpha \ll_\ri \beta$. 
(ii) $N=2$ and $(\alpha_1,\alpha_2) \ll_{\sf right} (\beta_1,\beta_2)$.} 
\label{fig:arc-order}
\end{figure} 
\end{example}

If $N>1$ both $\prec_{\sf right}$ and $\ll_{\sf right}$ are strict partial orderings but they are distinct since $\Gamma \prec_{\sf right} \Gamma'$ does not imply $\Gamma \ll_{\sf right} \Gamma'$ even if $P=\emptyset$. 
Indeed, $\gamma_i \ll_{\sf right} \gamma'_{i}$ for all $i=1,\ldots,N$ (which is stronger than $\Gamma\prec_\ri \Gamma'$ when $P \neq \emptyset$) does not imply $\Gamma \ll_{\sf right} \Gamma'$ as the next example shows.

\begin{example}
\label{example:prec-does-not-imply-ll}
For $\Gamma=(\gamma_1,\gamma_2),\Gamma'=(\gamma'_1,\gamma'_2) \in \mathcal{A}_{\{b_1,b_2\}}(S)$ in Figure \ref{fig:arc-order-example}, we have $\gamma_1 \ll_{\sf right}\gamma'_1$ and $\gamma_2 \ll_{\sf right}\gamma'_2$. 
That is, $\Gamma\prec_\ri\Gamma'$. 
However, $\Gamma \not \ll_{\sf right} \Gamma'$ because due to the shaded region enclosed by $\Gamma \cup \Gamma'$, any arc system $\Gamma''=(\gamma''_1,\gamma''_2)\in\mathcal{A}_{\{b_1,b_2\}}(S)$ with $\gamma_i \prec_{\sf right} \gamma''_i \prec_{\sf right} \gamma'_i$ $(i=1,2)$, $\gamma''_1$ must intersect $\gamma_2\cup\gamma'_2$ and $\gamma''_2$ must intersect $\gamma_1\cup \gamma'_1$.
\end{example}
\begin{figure}[htbp]
\includegraphics*[width=30mm,bb=254 560 355 722]{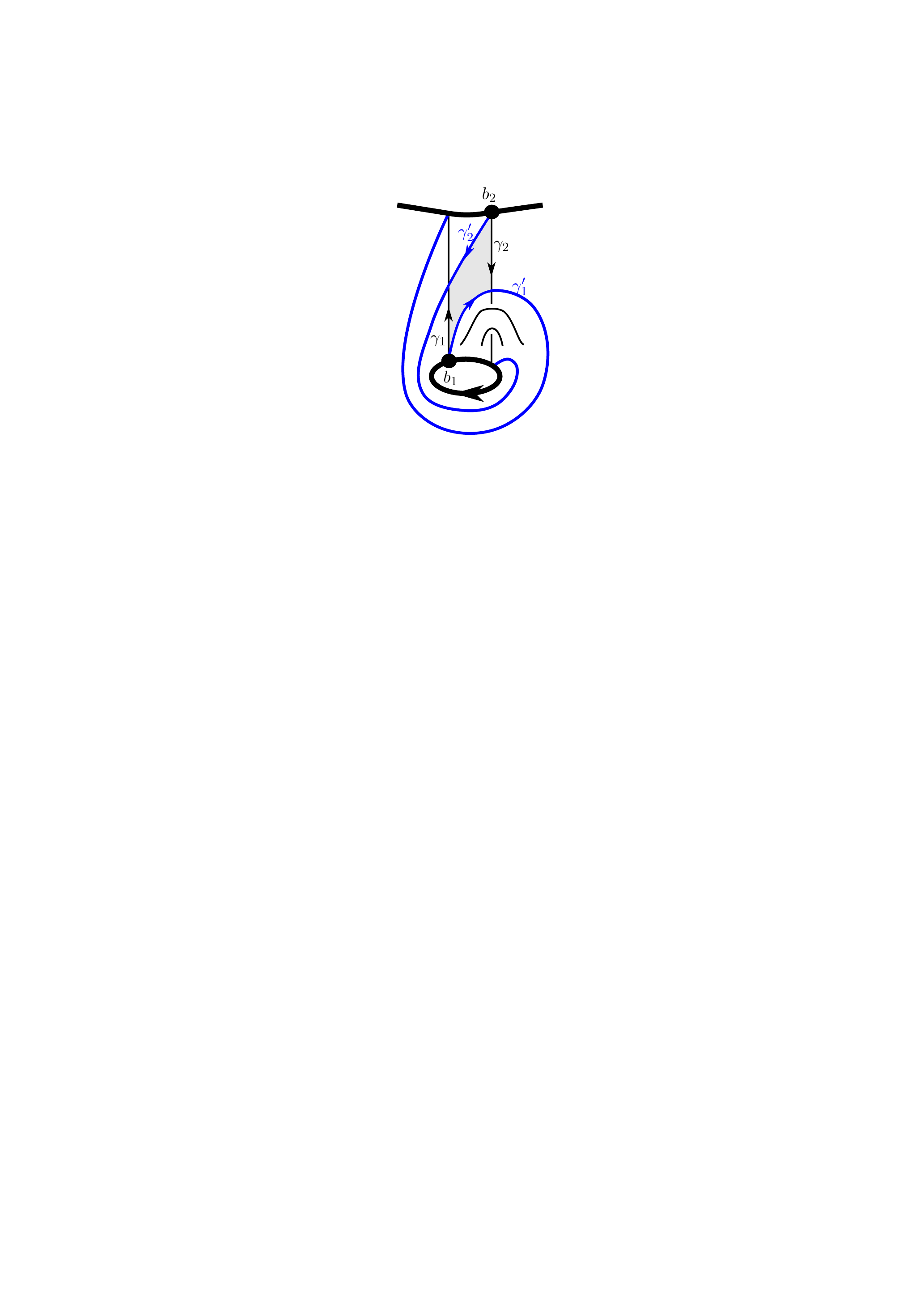}
\caption{
$\Gamma\prec_\ri \Gamma'$ (moreover, $\gamma_i \ll_{\ri} \gamma'_i$ for every $i$) but $\Gamma\not\ll_\ri\Gamma'$.
The orderings $\prec_{\sf right}$ and $\ll_{\sf right}$ are different when $N>1$.} 
\label{fig:arc-order-example}
\end{figure}

\subsection{$N$-twist left-veering}
In this section we define $N$-twist left-veering as a generalization of non-right-veering.

\begin{definition}[Boundary based region]
\label{def-of-R} 
Suppose that $N$-arc systems $\Gamma,\Gamma' \in \A$ efficiently intersect. 
The \emph{boundary based region} $R=R(\Gamma,\Gamma')$ for $\Gamma$ and $\Gamma'$ is an embedded $2N$-gon in $S$ with the boundary $\partial R \subset \Gamma \cup \Gamma'$ (see Figure \ref{fig:boundary-obstruction}, \ref{fig:zu-boundary-twist}) such that: 
\begin{itemize}
\item[(i)] The interior of $R$ is disjoint from $\Gamma'$
\item[(ii)] The orientation of $\partial R$ agrees with that of $\Gamma'$ and disagrees with that of $\Gamma$.
\item[(iii)] 
The corners of $R$ are read $v_1, q_1, v_2, q_2,\ldots,v_N, q_N$ with respect to the orientation of $\partial R$ where $\{v_1,\ldots, v_N\} =\mathcal B \subset \partial S$ and $\{q_1,\ldots,q_N \} \subset (\Gamma\cap \Gamma') \setminus \mathcal{B}.$
\end{itemize}
\end{definition}

Here are some terminologies we use: 

\begin{itemize}
\item
Renumbering the arcs of $\Gamma=(\gamma_1,\ldots,\gamma_N)$ and $\Gamma'=(\gamma'_1,\ldots,\gamma'_N)$ we may assume that $v_i=\gamma_i(0)$ and $q_i \in \gamma'_i \cap \gamma_{i+1}$ for $i=1,\ldots,N$ (mod $N)$. 
Corners $v_1,\ldots,v_N$ are called {\em base corners} and depicted by black dots $\bullet_{v_i}$ in figures, and  
$q_1,\ldots,q_N$ are called {\em non-base corners} and depicted by hollow circles $\circ_{q_i}$. 

\item
When $\Gamma$ and $\Gamma'$ do not form any boundary based region we denote $R(\Gamma,\Gamma')=\emptyset$.

\item If $\Int(R(\Gamma,\Gamma'))\cap \Gamma =\emptyset$ then we say that $R(\Gamma,\Gamma')$ is {\em embedded}.

\item 
If $R(\Gamma, \Gamma') \neq \emptyset$ contains $k$ puncture points then it is said to have type $(N, k)$.
\item 
When $N=1$, we say that $R(\Gamma, \Gamma')=R(\gamma, \gamma')$ has type $(1,0)$ if and only if $\gamma' \prec_\ri \gamma$.
\end{itemize}

Here are some remarks we want to highlight: 
\begin{itemize}
\item 
All the non-base corners $q_i$ are negative intersections of $\Gamma$ and $\Gamma'$.

\item
Given $\Gamma,\Gamma'$, the boundary based region is unique if it exists. 

\item
Although $\Gamma'$ and $\Int(R)$ are disjoint, $\Gamma$ may intersect $\Int(R)$.

\item 
If $R(\Gamma, \Gamma')$ has type $(1,0)$ then $R(\Gamma, \Gamma')=\emptyset$. 
 
\item 
For $N=1$, if $\Gamma=(\gamma)$ and $\Gamma'=(\gamma')$ form a boundary based region $R(\Gamma, \Gamma')$ then it is a punctured bigon as depicted in Figure \ref{fig:N1case} since $\Gamma$ and $\Gamma'$ intersect efficiently. 
In \cite{IK-qveer}, it is called a \emph{boundary right $P$-bigon} from $\gamma$ to $\gamma'$.
\end{itemize}

With the above remark, boundary based regions can be viewed as generalization of boundary right $P$-bigons.
However,  since a boundary right $P$-bigon may be immersed, it is not always a boundary based region. 
In \cite[Proposition 3.5]{IK-qveer}, it is shown that a boundary right P-bigon serves as an obstruction for $\gamma \ll_\ri \gamma'$. 
Similarly, a boundary based region serves as an obstruction for $\Gamma \ll_{\ri} \Gamma'$:

\begin{figure}[htbp]
\includegraphics*[width=25mm,bb=267 618 343 717]{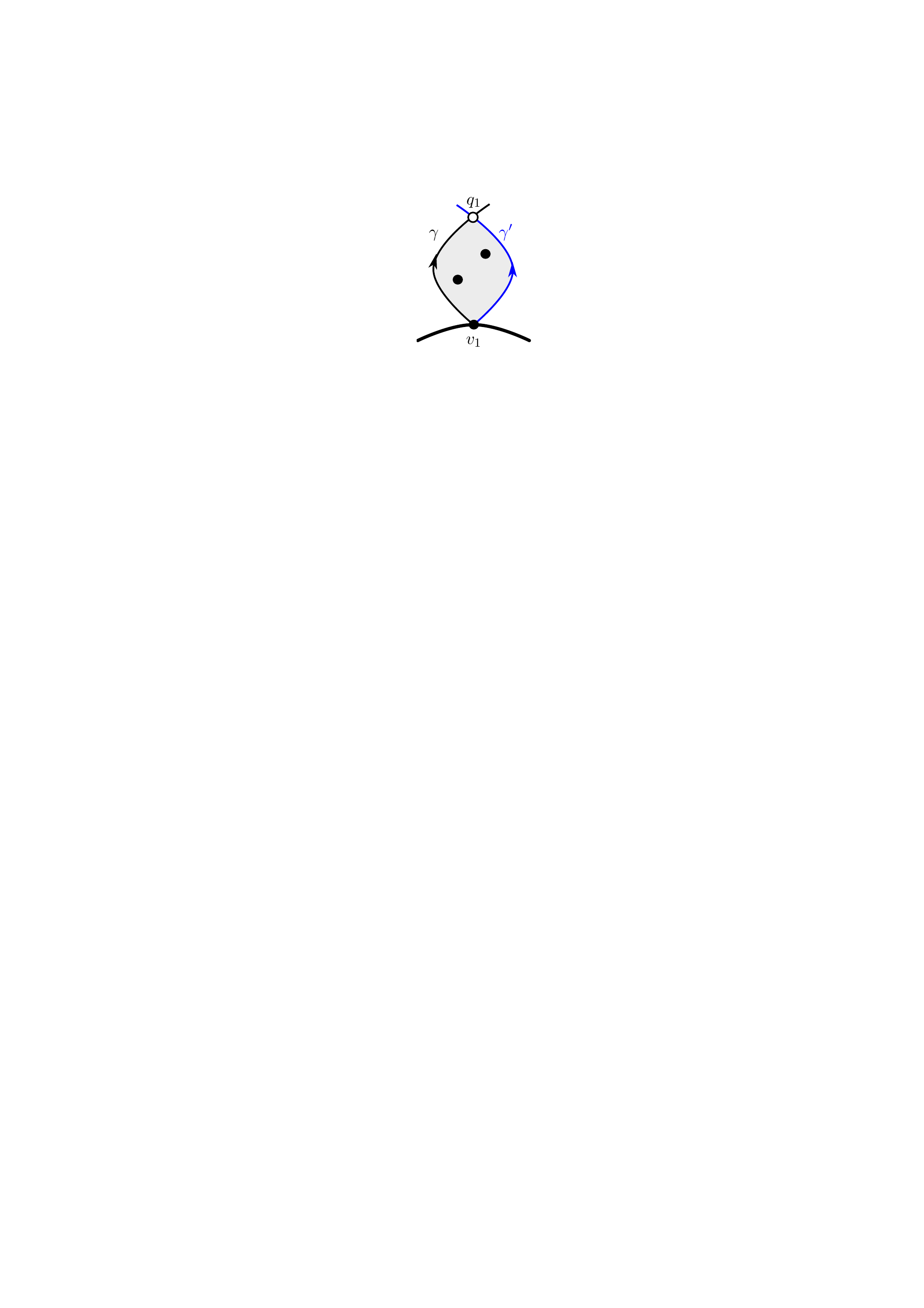}
\caption{A boundary based region where $N=1$. (In \cite{IK-qveer} it is called a boundary right $P$-bigon.)}
\label{fig:N1case}
\end{figure} 

\begin{proposition}
\label{prop:Robstruction}
If $R(\Gamma,\Gamma') \neq \emptyset$ then $\Gamma \prec_\ri \Gamma'$ but $\Gamma \not\ll_{\sf right} \Gamma'$. 
\end{proposition}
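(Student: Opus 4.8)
\textbf{Proof proposal for Proposition~\ref{prop:Robstruction}.}

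The plan is to separate the two assertions. The inequality $\Gamma\prec_\ri\Gamma'$ should follow almost directly from the defining properties of the boundary based region $R=R(\Gamma,\Gamma')$: by Definition~\ref{def-of-R}(ii) the orientation of $\partial R$ agrees with $\Gamma'$ and disagrees with $\Gamma$, and the base corners are exactly $v_1,\dots,v_N$. Reading $\partial R$ near a base corner $v_i$, the arc $\gamma_i$ of $\Gamma$ and the arc $\gamma'_i$ of $\Gamma'$ both emanate from $v_i$, and the region $R$ sits between them; the agreement/disagreement of orientations forces $\gamma'_i$ to lie on the right of $\gamma_i$ at $v_i$. Hence $\gamma_i\prec_\ri\gamma'_i$ for each $i$ (or they could coincide, but the existence of the $2N$-gon and the non-base corners $q_i$ being genuine intersection points rules out equality for at least one index; in any case $\Gamma\neq\Gamma'$), giving $\Gamma\prec_\ri\Gamma'$. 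The only care needed here is to record that all $q_i$ are \emph{negative} intersections, which is already noted in the remarks following Definition~\ref{def-of-R} and is what pins down which side $R$ lies on.

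The substantive part is $\Gamma\not\ll_\ri\Gamma'$. I would argue by contradiction, following the template of \cite[Proposition 3.5]{IK-qveer} (the $N=1$ case) but adapted to arc systems. Suppose there is a chain $\Gamma=\Gamma_0\disj\Gamma_1\disj\cdots\disj\Gamma_k=\Gamma'$ in $\A$. The key invariant to track is how an intermediate arc system $\Gamma_j$ meets the region $R$. Since each step $\Gamma_{j}\disj\Gamma_{j+1}$ satisfies $\Gamma_j\cap\Gamma_{j+1}=\B$, consecutive arc systems in the chain are disjoint away from the base points; combined with $\Gamma_j\prec_\ri\Gamma_{j+1}$ this constrains $\Gamma_j$ to sweep monotonically from $\Gamma$ toward $\Gamma'$. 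I would isolate the $i$-th arc $\gamma^{(j)}_i$ of $\Gamma_j$ and the two boundary edges of $R$ meeting at $v_i$, namely $\gamma_i\subset\Gamma$ and $\gamma'_i\subset\Gamma'$; because $\gamma_i\preceq_\ri\gamma^{(j)}_i\preceq_\ri\gamma'_i$ near $v_i$ and $\gamma^{(j)}_i$ starts at $v_i$, the arc $\gamma^{(j)}_i$ must enter the region $R$ (at least initially) through the corner $v_i$. The contradiction comes from the $2N$-gon structure: the non-base corner $q_i$ lies on $\gamma'_i\cap\gamma_{i+1}$, so as one follows the edges of $R$ around, the arc $\gamma^{(j)}_i$ that enters $R$ at $v_i$ cannot exit $R$ except by crossing an edge belonging to another arc of $\Gamma$ or $\Gamma'$, and in a chain step that crossing must actually occur in $\Gamma_j$ itself against a \emph{different} index — i.e. $\gamma^{(j)}_i$ is forced to intersect $\gamma^{(j)}_{i'}$ for some $i'\neq i$. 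This is exactly the obstruction illustrated in Example~\ref{example:prec-does-not-imply-ll}: the shaded $2N$-gon cannot be crossed by a disjoint arc system sliding through it. One can make this rigorous by an innermost-arc / minimal-position argument on $\Gamma_j\cap\partial R$, or by a winding-number / algebraic-intersection count of $\Gamma_j$ with a fixed properly embedded arc dual to $R$.

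The main obstacle I anticipate is making the ``$\Gamma_j$ must pass through $R$'' step precise for $N>1$: for a single arc ($N=1$) the region $R$ is a bigon and the argument is the clean one from \cite{IK-qveer}, but for a $2N$-gon the monotonicity has to be asserted simultaneously for all $N$ arcs of $\Gamma_j$, and one has to rule out the possibility that $\Gamma_j$ ``routes around'' $R$ using the topology of $S\setminus P$ outside $R$. The right way to close this gap is probably to pick, for each $i$, the sub-arc of $\partial R$ from $v_i$ to $q_i$ along $\gamma'_i$ together with the sub-arc from $q_i$ back toward $v_{i+1}$ along $\gamma_{i+1}$, and observe that $\Gamma_j\cap R$ consists of arcs each of which, by the disjointness conditions in the chain and the efficiency assumption, either is boundary-parallel in $R$ or connects a base corner to an edge — and a short case analysis shows the latter forces a self-intersection of $\Gamma_j$, contradicting $\Gamma_j\in\A$. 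Everything else (the first inequality, the bookkeeping of corners and orientations) is routine once the figures are set up.
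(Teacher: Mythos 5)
Your first paragraph, establishing $\Gamma\prec_\ri\Gamma'$, is fine and consistent with what the paper records only as a remark after Definition~\ref{def-of-R}; the paper's proof does not even bother to restate it. The issue is with the second part.

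The contradiction you aim for is the wrong one. You claim that some intermediate arc system $\Gamma_j$ must self-intersect, i.e.\ that $\gamma^{(j)}_i$ is ``forced to intersect $\gamma^{(j)}_{i'}$ for some $i'\neq i$'', contradicting $\Gamma_j\in\A$. That claim is false in general: already for $N=2$, two disjoint arcs can enter the $4$-gon $R$ at $v_1,v_2$ and exit through the $\Gamma$-edges without meeting each other. What the $2N$-gon structure really forces (and what your own Example~\ref{example:prec-does-not-imply-ll} illustrates) is that an intermediate arc system must meet an arc of a \emph{different} index of $\Gamma$ or $\Gamma'$, not of itself. The constraint that gets violated is one of the $\disj$ links in the chain, not the pairwise disjointness of the arcs of a single $\Gamma_j$. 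The paper's argument is short but pivots precisely on this: it takes the last step $\Gamma_{k-1}\disj\Gamma'$, observes that since $\Gamma\prec_\ri\Gamma_{k-1}\prec_\ri\Gamma'$ every arc of $\Gamma_{k-1}$ enters $R$ at a base corner, and concludes that sub-arc-systems $\Gamma_{k-1}^{sb}\subset\Gamma_{k-1}$ and $\Gamma'^{sb}\subset\Gamma'$ bound a nonempty boundary based region $R(\Gamma_{k-1}^{sb},\Gamma'^{sb})\subset R(\Gamma,\Gamma')$. The non-base corners of this smaller region lie in $\Gamma_{k-1}\cap\Gamma'\setminus\mathcal B$, which directly contradicts $\Gamma_{k-1}\disj\Gamma'$ (i.e.\ $\Gamma_{k-1}\cap\Gamma'=\mathcal B$). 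Your closing remarks about innermost-arc arguments and winding numbers would not repair the logic, because they still aim at self-intersection of $\Gamma_j$; what is needed is to redirect the endgame so that the forced crossing lands on $\Gamma'$ (or on $\Gamma_{j+1}$), violating a $\disj$ relation rather than the arc-system condition.
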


The proof is related to Example \ref{example:prec-does-not-imply-ll}.

\begin{proof}
Assume to the contrary that $R(\Gamma,\Gamma') \neq \emptyset$ and $\Gamma \ll_{\sf right} \Gamma'$. 
Then there exists an interpolating sequence: 
$$\Gamma = \Gamma_0 \disj \Gamma_1 \disj \cdots \disj \Gamma_k = \Gamma'$$
for some $k\geq 2$. 
The $N$-arc system $\Gamma_{k-1}$ cuts $R(\Gamma,\Gamma')$ into smaller pieces. There exist sub $M$-arc systems $\Gamma_{k-1}^{sb} \subset \Gamma_{k-1}$ and $\Gamma'^{sb} \subset \Gamma'$ with $2 \leq M \leq N$ we have $ \emptyset \neq R(\Gamma_{k-1}^{sb}, \Gamma'^{sb}) \subset R(\Gamma, \Gamma'),$  
see Figure~\ref{fig:boundary-obstruction}. 
This contradicts the assumption that $\Gamma_{k-1}\disj \Gamma'$.
\end{proof}

\begin{figure}[htbp]
\includegraphics*[width=120mm,bb=129 592 495 721]{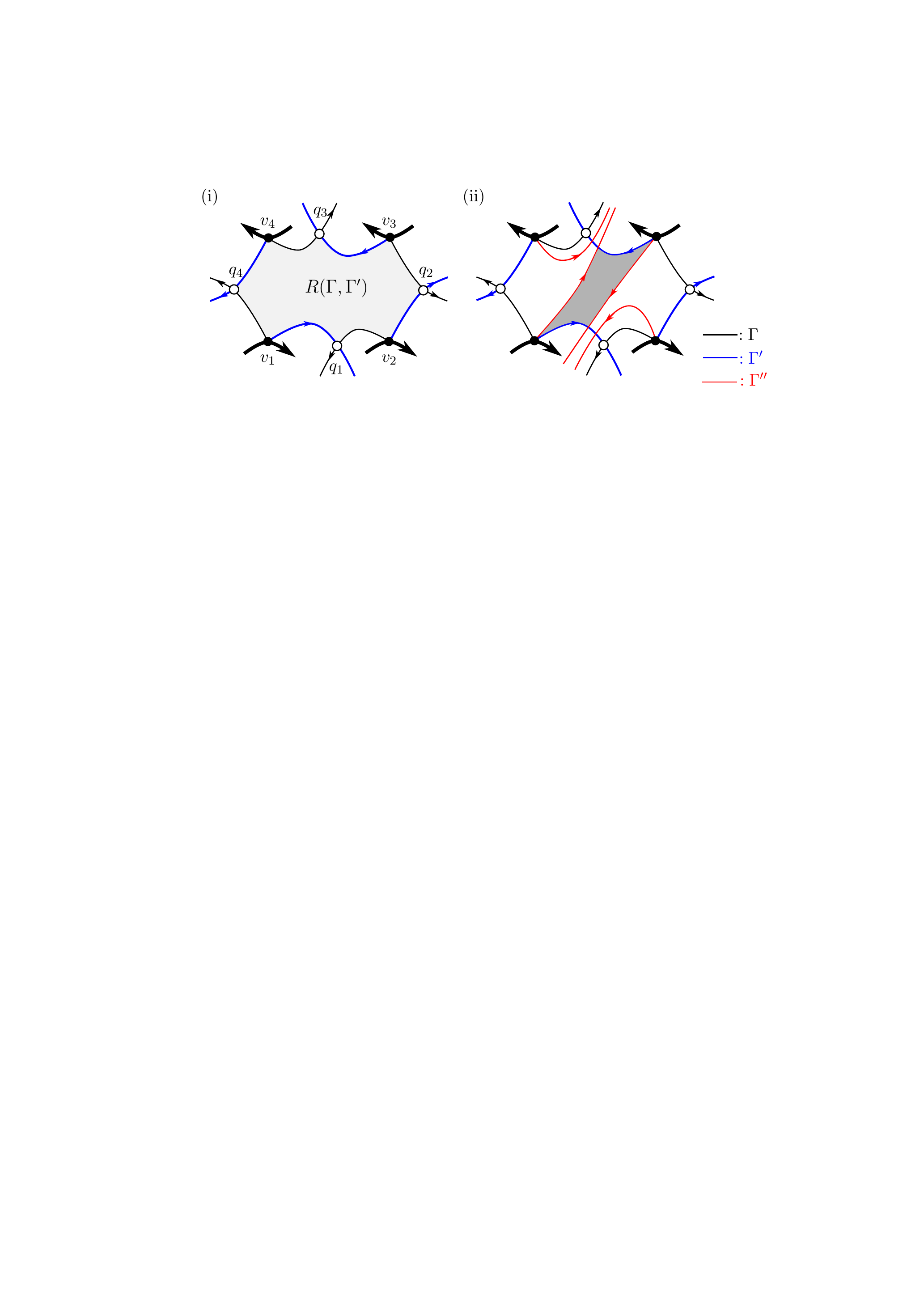}
\caption{(i): Boundary based region $R(\Gamma,\Gamma')$. (ii) When $R(\Gamma,\Gamma')\neq \emptyset$, $\Gamma \prec_{\ri} \Gamma'$ but $\Gamma \not\ll_{\ri} \Gamma'$ since for $\Gamma \prec^{\sf disj} \Gamma'' \prec_{\ri} \Gamma'$, sub arc systems of $\Gamma''$ and $\Gamma'$ form another boundary based region. }
\label{fig:boundary-obstruction}
\end{figure}

In the following, we mainly study boundary based regions $R(\Gamma,\phi(\Gamma))$ where $\Gamma'=\phi(\Gamma)$ for $\phi \in \MCG(S,P)$.

\begin{definition}[Left twist]\label{def:left-twist}
Let $\mathcal{B}=(v_1,\ldots,v_N)$.
For $\Gamma \in \mathcal{A}_{\mathcal{B}}(S, P)$ and $\phi \in \MCG(S, P)$ we define $\phi^{tw}(\Gamma)$ the {\em left-twist} of $\Gamma$ for $\phi$ as follows: 
\begin{itemize}
\item
When $R(\Gamma,\phi(\Gamma)) =\emptyset$ we define $\phi^{tw}(\Gamma)=\phi(\Gamma)$.
\item
When $R(\Gamma,\phi(\Gamma)) \neq \emptyset$, we take $t_i,s_i \in [0,1]$ so that $q_i=\phi(\gamma_i(t_i))=\gamma_{i+1}(s_i)$ and define 
\[ \phi^{tw}(\Gamma)= (\phi^{tw}(\gamma_1), \ldots, \phi^{tw}(\gamma_N))
\]
where 
\[ 
\phi^{tw}(\gamma_i)=\gamma_{i}|_{[0,s_{i-1}]}\ast (\phi\circ \gamma_{i-1})|_{[t_{i-1},1]} \]
start from $v_i$ and go along $\gamma_i$ until reaching $q_{i-1}$ then turn left and switch to $\phi(\gamma_{i-1})$ to the end. Here $\ast$ represents the concatenation of paths.
See Figure \ref{fig:zu-boundary-twist}. 
(When $N=1$ we have $\gamma_i=\gamma_{i+1}$.) 
\end{itemize}
\end{definition}

\begin{figure}[htbp]
\includegraphics*[width=90mm,bb=172 550 463 716]{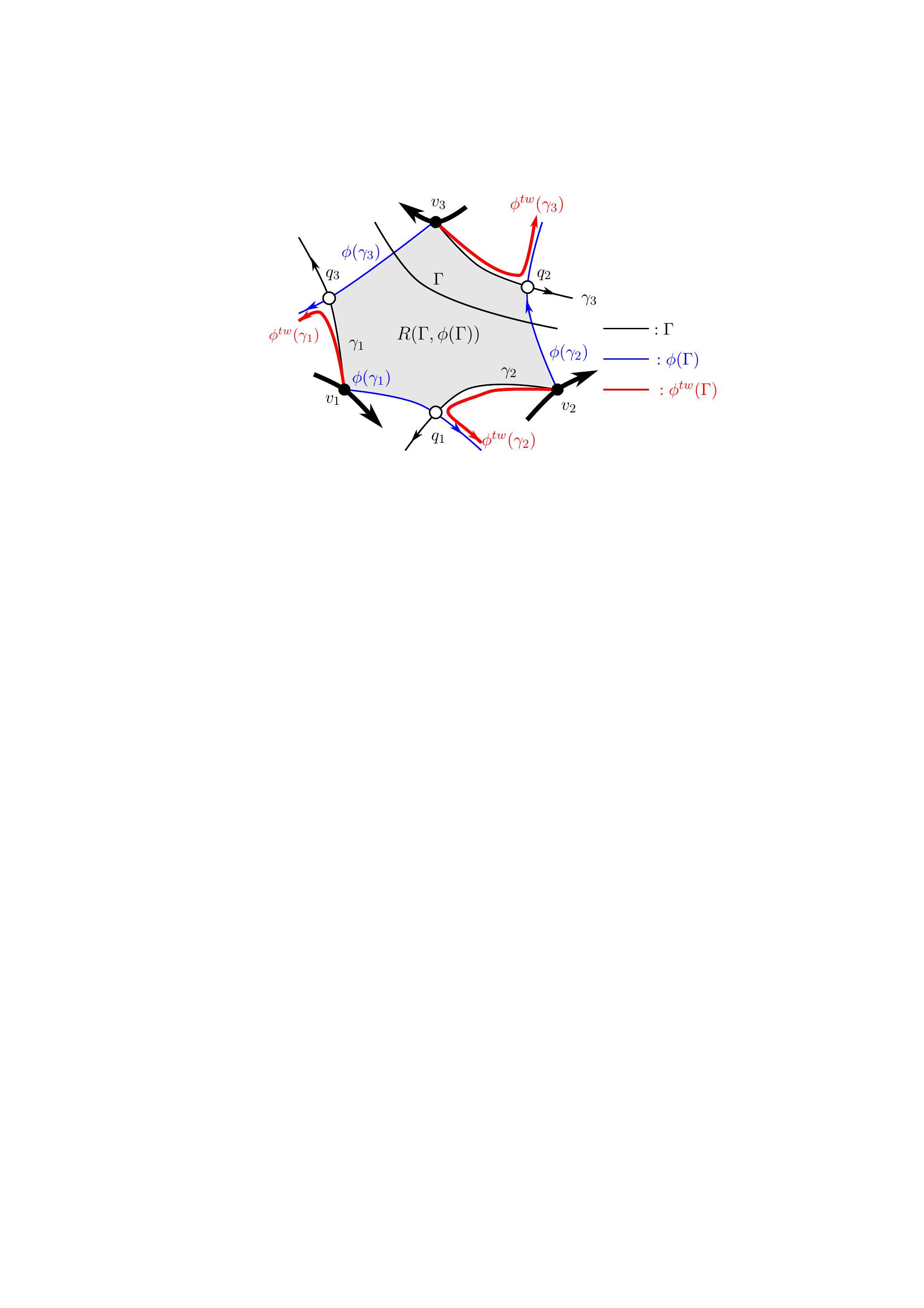}
\caption{The boundary based region $R(\Gamma,\phi(\Gamma))$ and the left twist $\phi^{tw}(\Gamma)$: from each base point $v_i$ walk along $\gamma_i$ until hitting $q_{i-1}$ then turn left and walk along $\phi(\gamma_{i-1})$.} 
\label{fig:zu-boundary-twist}
\end{figure}

Using the left-twist $\phi^{tw}(\Gamma)$ we define $(N,k)$-twist left-veering.

\begin{definition}($(N,k)$-twist left-veering)
An element $\phi \in MCG(S,P)$ is \emph{$(N,k)$-twist left-veering} if 
there exist a set of $N$ base points $\mathcal{B}$ and an $N$-arc system $\Gamma \in \A$ such that
\begin{itemize}
\item $R(\Gamma, \phi(\Gamma))$ has type $(N, k)$, and 
 
\item $\phi^{tw}(\Gamma) \ll_{\sf right} \Gamma.$
\end{itemize}

We say that $\phi \in MCG(S,P)$ is \emph{$N$-twist left-veering} if $\phi$ is $(N,k)$-twist left-veering for some $k$.
\end{definition}

\begin{remark}
\label{remark:N=1}
When $N=1$, we have the following: 
\begin{itemize}
\item 
$\phi \in \MCG(S)$ is non-right-veering if and only if 
$\phi$ is $1$-twist left-veering.
\item
$\phi \in \MCG(S, P)$ is  non-quasi-right-veering if and only if $\phi$ is $(1,0)$-twist left-veering.
\end{itemize}
\end{remark}

\section{Overtwisted disks and twist left-veering monodromies} 
\label{section4}

\begin{theorem}
\label{theorem:non-Nrv-OT}
Let $L$ be a closed braid with respect to $(S, \phi)$ and $\phi_L \in \MCG(S,P)$ be its distinguished monodromy. 
\begin{enumerate}
\item If $\phi \in \MCG(S)$ is $N$-twist left-veering then $(M_{(S,\phi)},\xi_{(S,\phi)})$ is overtwisted, and there is an overtwisted disk $D$ that intersects the binding $B$ at $N$ points.
\item 
If $\phi_L \in \MCG(S, P)$ is $(N,k)$-twist left-veering then there is an overtwisted disk $D$ that intersects the binding $B$ at $N$ points and intersects the closed braid $L$ at $k$ points.
In particular, if $\phi_L$ is $(N,0)$-twist left-veering then $L$ is loose.
\end{enumerate}
\end{theorem}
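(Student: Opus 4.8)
The plan is to construct an overtwisted disk directly from the combinatorial data of the twist-left-veering arc system, following the strategy that was used for single arcs in \cite[Theorem 4.1]{IK-qveer} but now carried out in a "multi-arc" version. First I would set up the local model: since $\phi^{tw}(\Gamma)\ll_\ri\Gamma$, there is an interpolating sequence $\phi^{tw}(\Gamma)=\Gamma_0\disj\Gamma_1\disj\cdots\disj\Gamma_m=\Gamma$ of $N$-arc systems, and I want to realize this entire sequence geometrically inside the mapping torus $M_{(S,\phi)}$. The key picture is that the $2N$-gon $R=R(\Gamma,\phi(\Gamma))$ sitting in the page $S_0$ records how $\Gamma$ compares with its monodromy image, and the left-twist operation $\phi^{tw}$ replaces the "outward swing" of $\phi(\Gamma)$ near each base corner $v_i$ by a path that turns left at the non-base corner $q_{i-1}$. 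The disk will be assembled from (i) a piece of the page $S_0$ bounded essentially by $R$ (this is where the $N$ binding intersections and the $k$ puncture/braid intersections come from — the corners $v_1,\dots,v_N$ meet $\partial S$ hence the binding, and the $k$ punctures inside $R$ meet $L$), glued to (ii) a "staircase" region built from the suspension of the interpolating arc systems $\Gamma_0,\dots,\Gamma_m$, which fills in the discrepancy between $\Gamma$ pushed forward by $\phi$ and $\phi^{tw}(\Gamma)$.

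The core of the argument is the reduction to the non-quasi-right-veering case. The cleanest route: show that $(N,k)$-twist left-veering of $\phi$ (or $\phi_L$) on $(S,P)$ implies non-quasi-right-veering of the same monodromy after a controlled modification — specifically, I expect that $\phi^{tw}(\Gamma)\ll_\ri\Gamma$ combined with the geometry of $R$ lets one produce a single arc $\delta$ (perhaps after a change of base point, or in a subsurface $S'\subset S$ obtained by cutting along part of $\Gamma$) with $\phi(\delta)\ll_\ri\delta$, together with enough control on which punctures $\delta$ separates to track the $k$. Then Theorem~\ref{thm:previous}(2) (equivalently \cite[Theorem 4.1]{IK-qveer}) produces the overtwisted disk, and one checks by construction that it meets $B$ in exactly $N$ points and $L$ in exactly $k$ points. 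For part (1), the $P=\emptyset$ case, this specializes: $N$-twist left-veering forces non-right-veering of $\phi\in\MCG(S)$ (since with no punctures $\ll_\ri=\prec_\ri$ for single arcs, and the left-twist construction hands back an arc $\delta$ with $\phi(\delta)\prec_\ri\delta$), so Theorem~\ref{thm:previous}(1) applies and the count of binding intersections tracks $N$. Alternatively — and this may be the form actually needed to keep the intersection counts honest — I would build the disk by hand: take the union over $t\in[0,1]$ of the interpolating arcs suspended in $S_t$, cap it off in $S_0$ and $S_1=S_0$ using $R$ and its $\phi$-image, smooth the corners, and verify the boundary is transverse to the pages (hence a Legendrian unknot after Legendrian realization) with the correct Thurston--Bennequin invariant $0$, which is exactly Goodman's/Wand's criterion for an overtwisted disk.

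The main obstacle I anticipate is the transition from arc \emph{systems} back to a single arc — or, if one insists on staying with arc systems, controlling the self-intersections of the candidate disk. Unlike the $N=1$ case where $R$ is just a (possibly punctured) bigon, for $N\geq 2$ the region $R(\Gamma,\phi(\Gamma))$ need \emph{not} be embedded (the page-piece $\Gamma$ may re-enter $\Int R$, as flagged right after Definition~\ref{def-of-R}), and the interpolating arc systems $\Gamma_j$ can intersect each other's suspensions in $M_{(S,\phi)}$. So the delicate point is arranging the interpolation and the gluing so that the resulting 2-complex is genuinely an embedded disk, or else invoking the Eliashberg/Wand machinery that lets a \emph{singular} overtwisted disk still certify overtwistedness. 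I would handle this by working in the \emph{branched-surface} picture from \cite{IK-qveer}: the disjointness conditions $\Gamma_{j}\cap\Gamma_{j+1}=\mathcal B$ in the definition of $\disj$ are precisely what is needed to see consecutive suspended layers as disjoint (away from the common boundary), so the "staircase" is embedded layer by layer, and the only place embeddedness can fail is along $\partial\Gamma\setminus\mathcal B$, which a small pushoff into distinct boundary points of $\partial S$ resolves. Finally, the puncture bookkeeping — that $R$ has type $(N,k)$ forces exactly $k$ transverse intersections of $D$ with $L$ and no more — should follow because the rest of $D$ (the staircase part) lies in the complement of $P$ by construction, the arcs of every $\Gamma_j$ being in $\mathcal{A}_{\mathcal B}(S,P)$.
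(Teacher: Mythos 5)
Your proposal correctly identifies the broad strategy the paper uses — build a transverse overtwisted disk whose movie presentation realizes the interpolating sequence $\phi^{tw}(\Gamma)=\Gamma_0\disj\cdots\disj\Gamma_n=\Gamma$ in later pages and uses the boundary based region $R(\Gamma,\phi(\Gamma))$ near $t=0$ to bridge $\phi(\Gamma)$ to $\phi^{tw}(\Gamma)$ — and you correctly flag that the intersections with $B$ come from the base corners $v_i$ and the intersections with $L$ come from the $k$ punctures in $R$. Your second, ``build the disk by hand'' route is in spirit the paper's argument, which is carried out precisely in the open book foliation framework: Step~1 introduces $N-1$ negative hyperbolic points whose describing arcs lie inside $R$ (this is where the $k$ puncture intersections are picked up), and Step~2 introduces $N$ simultaneous positive hyperbolic points at each stage $\Gamma_j\disj\Gamma_{j+1}$, which is legitimate exactly because the $\disj$ relation forces the describing arcs to be pairwise disjoint.

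However, the route you call the ``cleanest'' — reducing to non-quasi-right-veering by producing a single arc $\delta$ with $\phi(\delta)\ll_\ri\delta$ — cannot work and is refuted by the paper's own Theorem~\ref{prop:depth2}. That theorem exhibits monodromies that are $(2,0)$-twist left-veering yet quasi-right-veering (depth $2$), so no such $\delta$ exists in general; the whole point of passing to arc systems is that the $N\geq 2$ criterion is strictly stronger. The subsurface variant (``cut along part of $\Gamma$'') is also a dead end: $\phi$ need not preserve $\Gamma$, so there is no induced monodromy on a cut-open surface, and the resulting open book would support a different contact manifold anyway. You should simply drop this route.

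Two further technical gaps in the by-hand construction: (i) the disk you assemble has additional positive elliptic points $w_i^j$ on $\partial S$ (one for each endpoint of each interpolating arc system), so the raw transverse overtwisted disk meets $B$ at more than $N$ points; the paper invokes the shrinking argument of \cite[Theorem 3]{IK-cover} to cancel positive elliptic/hyperbolic pairs and arrive at an overtwisted disk meeting $B$ in exactly $N$ points, and you need that step to ``keep the intersection counts honest.'' (ii) Your worry about embeddedness of $R$ and of the suspended layers is actually dissolved by the open book foliation framework — a transverse overtwisted disk is specified by its singular foliation and need not start out embedded — rather than by the ad hoc pushoff you describe, so I'd recommend stating that you work with open book foliations from the outset.
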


For the proof, we use {\em open book foliations} introduced by the authors in \cite{IK1-1} and we will assume the readers
are familiar with the definition and basic properties of open book foliations.
See the research monograph \cite{LM} by LaFountain and Menasco for a gentle introduction to the techniques of open book foliations that is central to the new work in this paper. 
Open book foliations had their origins in the work of Birman and Menasco in a series of papers about {\it braid foliations}. See Birman and Finkelstein's article \cite{BF} for a useful guide to the work of Birman and Menasco on braid foliations, and \cite{bm2} for their key paper that is relevant for us. It is the first place where braid foliations were used to solve a major then-open problem in contact topology.

In the proof, we will construct a transverse overtwisted disk $D$ which has $N$ negative elliptic points that are exactly the negative intersection points of $B \cap D$. 
For simplicity, both b-arc $\gamma \subset S_t$ of the open book foliation of $D$ and its image $p_t(\gamma) \subset S$ under  the canonical diffeomorphism $p_t:S_t\to S$ (defined in Section~\ref{section2.1}) are denoted by the same letter $\gamma$. This convention also applies in the proof of Theorem~\ref{prop:depth2}.
The idea of the construction of $D$ can be found in the proof of Theorem 2.4 in \cite{IK1-2}.  
As shown in \cite[Theorem 3]{IK-cover},  see also \cite[Theorem 5.2]{IK-qveer}, shrinking such a transverse overtwisted disk $D$ gives a usual overtwisted disk $D'$ with the geometric intersection $|D'\cap B| = N$ as desired.

\begin{proof}
We prove (2) since (1) is a special case of (2) where $P=\emptyset$ (i.e., the closed braid $L$ is empty).

Assume that $\phi_L \in \MCG(S, P)$ is $(N,k)$-twist left-veering. 

If $(N, k)=(1,0)$ then Remark \ref{remark:N=1} states that $\phi_L$ is non-quasi-right-veering. By \cite[Theorem 4.4]{IK-qveer} $L$ is loose. 
Moreover in the proof, an overtwisted disk that intersects the binding at one point is constructed.

If $N=1$ and $k>0$ then there is an arc $\gamma$ such that the bigon $R(\gamma, \phi_L(\gamma))$ contains $k$ punctures and $\phi_L^{tw}(\gamma) \ll_{\ri} \gamma$.
Suppose that $\phi_L^{tw}(\gamma) = \gamma_0 \disj \gamma_1 \disj \cdots \disj \gamma_k=\gamma$.
Using the interpolating sequence, by exactly the same way as in the proof of \cite[Theorem 4.4]{IK-qveer}, we construct a transverse overtwisted disk until we come to the moment of identifying the b-arcs $\phi_L^{tw}(\gamma)$ in the page $S_0$ and $\gamma$ in the page $S_1$ with the diffeomorphism $\phi_L$. 
See the gray cut disk in Figure~\ref{fig:cut-disk} (1). 
\begin{figure}[htbp]
\includegraphics*[width=110mm,bb=118 400 494 716]{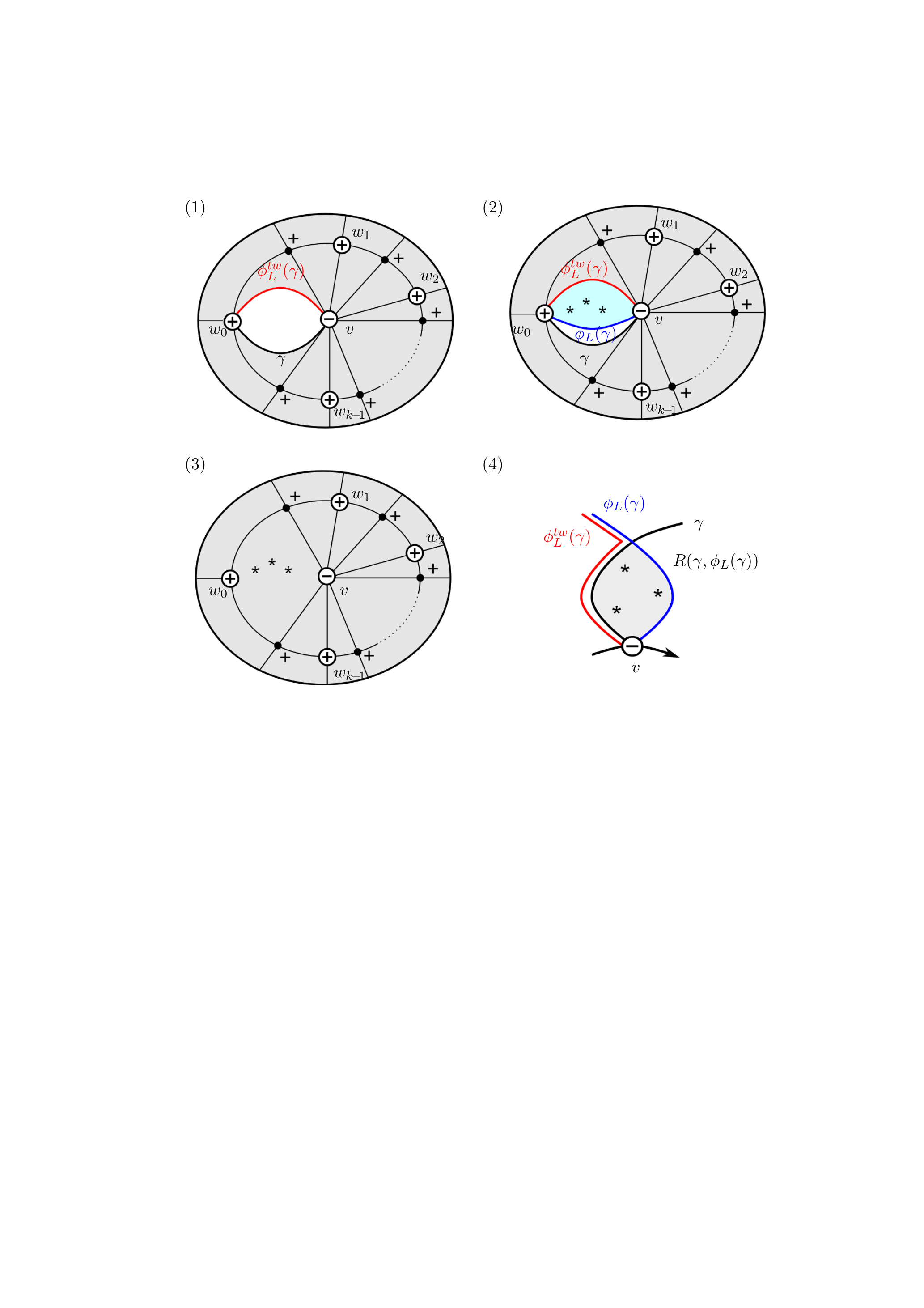}
\caption{Construction of a transverse overtwisted disk for $N=1$ and $k>0$.} 
\label{fig:cut-disk}
\end{figure} 

Clearly $\phi_L^{tw}(\gamma)\neq \phi_L(\gamma)$ in $\A$, which means we cannot identify the b-arcs $\phi_L^{tw}(\gamma)$ and $\gamma$ via $\phi_L$. 
To get around this issue, we 
\begin{itemize}
\item
place the b-arc $\phi_L^{tw}(\gamma)$ in the page $S_{\varepsilon}$ for some small $\varepsilon>0$,  
\item 
place the b-arc $\phi_L(\gamma)$ in the page $S_0$, and 
\item 
between the pages $S_0$ and $S_\varepsilon$ insert a family of b-arcs 
coming from an isotopy from $\phi_L(\gamma)$ to $\phi_L^{tw}(\gamma)$ that sweeps the $k$-punctured bigon $R(\gamma, \phi_L(\gamma))$, see Figure~\ref{fig:cut-disk} (4).
\end{itemize}
The open book foliation of the cut disk is changed to Figure~\ref{fig:cut-disk} (2) where the blue added bigon corresponds to the inserted family of b-arcs. 
Since the isotopy is done by passing the $k$ punctures of $R(\gamma, \phi_L(\gamma))$, the blue bigon is also punctured $k$ times, where the braid $L$ intersects the disk. 
Now we can successfully glue the b-arcs $\phi_L(\gamma)$ in $S_0$ and $\gamma$ in $S_1$ and obtain a transverse overtwisted disk with $k$ punctures, see Figure~\ref{fig:cut-disk} (3).

Next assume that $N\geq 2$. 
Since $\phi_L^{tw}(\Gamma) \ll_{\sf right} \Gamma$ we have an interpolating sequence of $N$-arc systems for some $n$: 
\[ 
\phi_L^{tw}(\Gamma)=\Gamma_0 \disj \Gamma_1 \disj \cdots \disj \Gamma_n=\Gamma.
\]
Since $\phi_L^{tw}(\Gamma)(1)=\Gamma(1)$ (as an unordered set of points) we note that $n\geq 2$. 
Let $\Gamma_j=(\gamma^j_1,\ldots,\gamma^j_N)$.
For $i=1,\ldots,N$ and $j=0,\ldots,n$, let $w^{j}_i:=\gamma_i^j(1)$ be the endpoint of the arc $\gamma^{j}_i$ different from the base point $v_i:=\gamma_i^j(0)$. When $\gamma^j_i=\gamma^{j+1}_{i}$ we place $w_i^j$ and $w_i^{j+1}$ next to each other so that $\gamma^j_i \prec_\ri \gamma^{j+1}_{i}$.
Thus, points $w_i^j$ are pairwise distinct except for $w^{n}_{i-1}=w_{i}^{0}$. See Figure~\ref{figA}.
\begin{figure}[htbp]
\includegraphics*[width=85mm,bb=166 593 447 727]{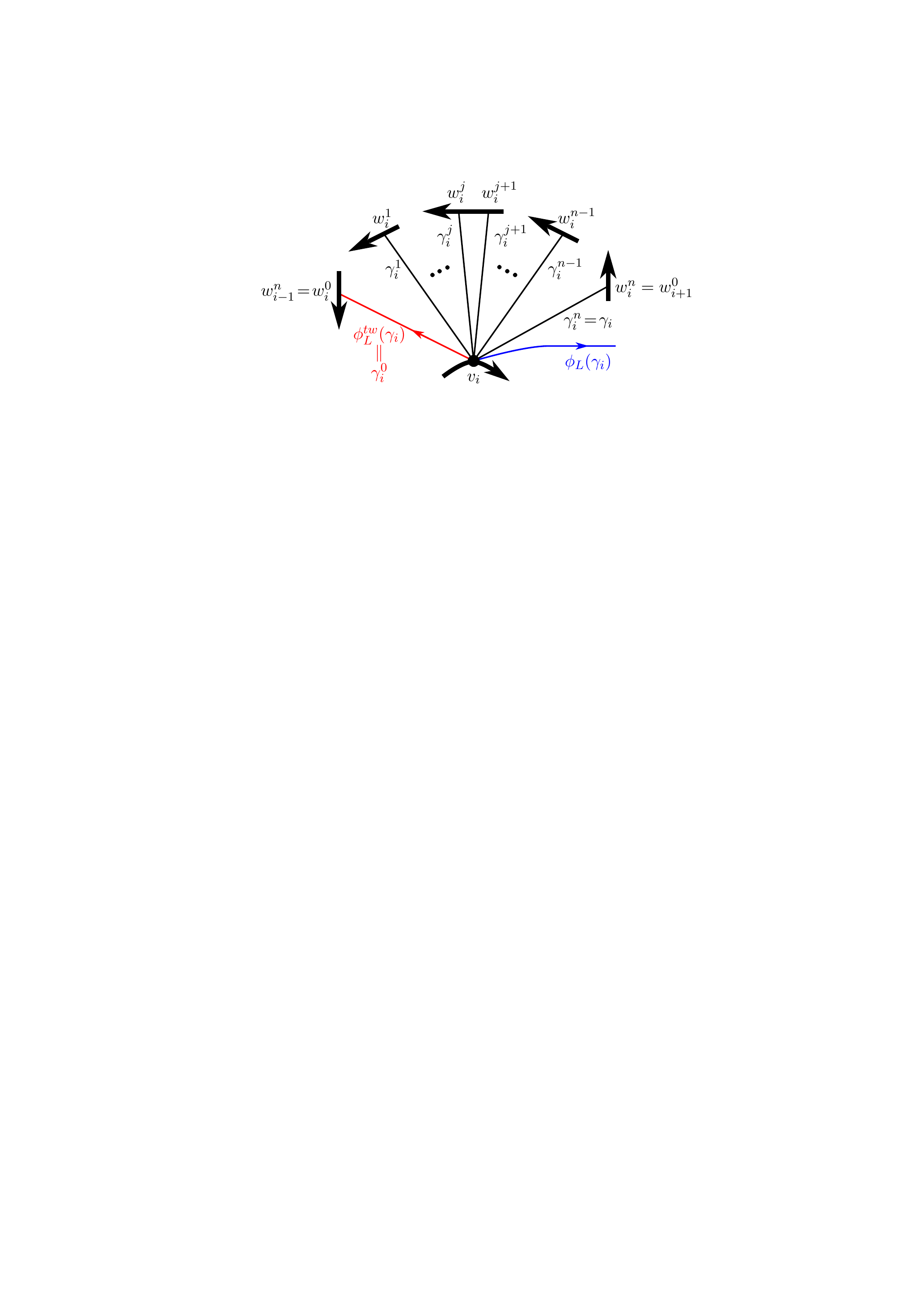} 
\caption{Interpolating sequence of arcs and their endpoints} 
\label{figA}
\end{figure}

We define a transverse overtwisted disk $D$ on which $v_i$ (resp. $w_i^j$) serves as a negative (resp. positive) elliptic point in the open book foliation of $D$. 

For the page parameter $t\in [0,1]$, let $b_{i}(t)$ denote a $b$-arc in the page $S_t$ that connects the negative elliptic point $v_i$ and positive elliptic point $w_i^j$ for some 
$j\in\{0,\ldots,n\}$ depending on $t$. 
Let $a_i^j(t)$ denote an a-arc in the page $S_t$ emanating from the positive elliptic point $w_i^j$ when $w_i^j$ is not taken by the b-arc $b_i(t)$.
(If $w_i^j$ is already taken by $b_i(t)$ then $a_i^j(t)$ does not exist.) 

Here is a movie presentation of $D$, that is a family of slices $\{\ D\cap S_t\ | \ t\in [0,1]\ \}$ such that 
$$
D\cap S_t =\left\{ b_i(t),\ a_i^j(t) \mid i=1,\ldots,N \mbox{ and }  j=1,\ldots, n\ \right\}. 
$$

On the page $S_0$ we define the b-arcs $b_i(0)$ and a-arcs $a_i^j(0)$ as follows: 
For $i=1,\ldots,N$ and $j=1,\ldots,n-1$ let $b_i(0):=\phi_L(\gamma_i^n)$ connecting the negative elliptic point $v_i$ and  the positive elliptic point $w^{n}_i$ and let  
$a_i^j(0)$ be an a-arc emanating from the positive elliptic point $w^{j}_i$. We obtain 
\[ D\cap S_0= \phi_L(\Gamma) \cup \left\{ a_i^j(0) \mid i=1,\ldots,N \mbox{ and } j=1,\ldots,n-1\right\}.\]
See the left sketch in Figure~\ref{figAA}.
\begin{figure}[htbp]
\includegraphics*[width=120mm,bb= 121 598 489 725]{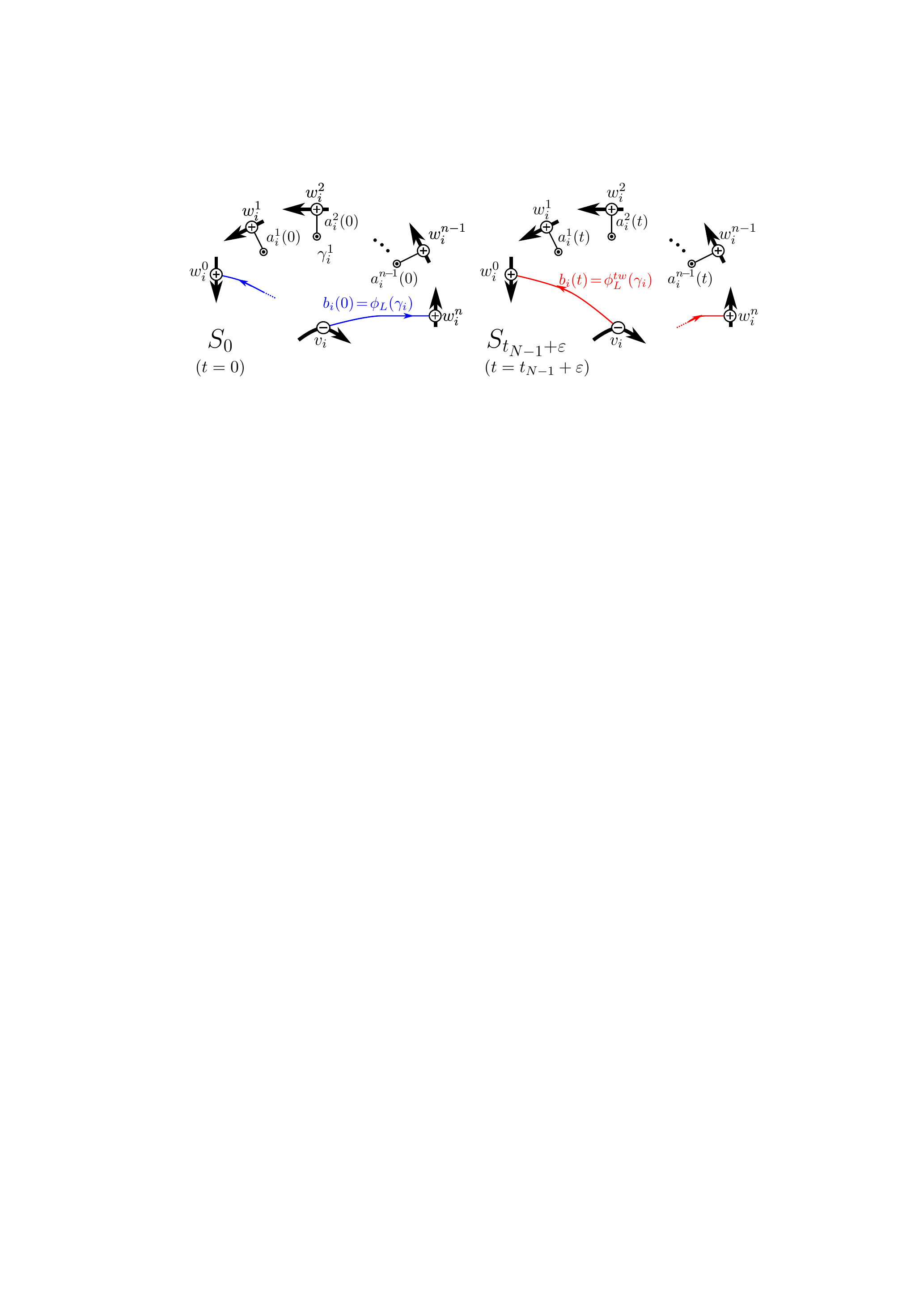}
\caption{a-arcs and b-arcs in the pages $S_0$ and $S_{t_{N-1}+\epsilon}$.} 
\label{figAA}
\end{figure}

The remaining construction consists of two steps:

(Step 1): 
Let $0<t_1<\cdots<t_{N-1}<1$ and $0<\varepsilon\ll 1$. 
For each $i=1,\ldots,N-1$ we introduce a negative hyperbolic point $h^{-}_{i}$ in the page $S_{t_i}$ whose describing arc connects $b_i(t_i-\varepsilon)$ and $b_{N}(t_i-\varepsilon)$
and is contained in $R(\Gamma, \phi_L(\Gamma))$ (see Figure \ref{figB} for $i=1, 2$). 
Therefore, $b_i(t_i-\varepsilon)=\phi(\gamma_i)$ and $b_i(t_{i}+\varepsilon)=\phi^{tw}(\gamma_i)$. 
For $i'\neq i$ we define $b_{i'}(t_i-\varepsilon):=b_{i'}(t_i+\varepsilon)$. As for a-arcs $a_i^j(t)$ where $i=1,\dots,N$, $j=1,\dots,n-1$ and $t\in[0, t_{N-1}+\varepsilon]$ no change is made during the 1st step.

Thus, after the 1st step 
(see the right sketch in Figure~\ref{figAA}) we have $(b_{1}(t),\ldots,b_{N}(t))=\phi_L^{tw}(\Gamma)=\Gamma_0$.
In other words we have at $t=\epsilon+t_{N-1}$ 
$$D\cap S_{\epsilon+t_{N-1}}= \phi_L^{tw}(\Gamma) \cup \left\{ a_i^j(\epsilon+ t_{N-1}) \mid i=1,\ldots,N \mbox{ and } j=1,\ldots,n-1\right\}.$$
\begin{figure}[htbp]
\includegraphics*[width=120mm,bb= 127 588 483 713]{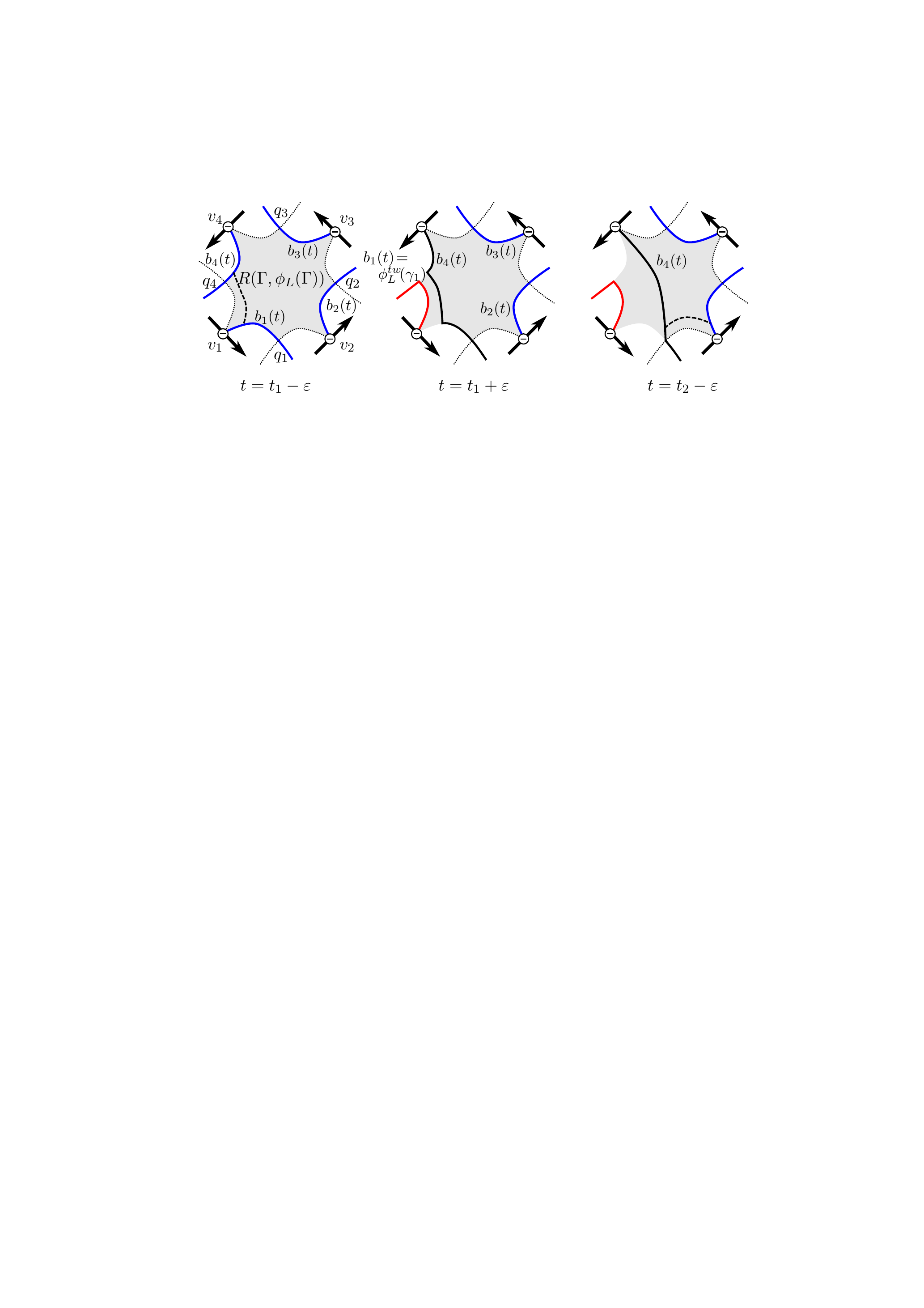}
\caption{Movie presentation for Step 1. Dashed arcs are describing arcs for $h_1^-$ and $h_2^-$.}
\label{figB}
\end{figure} 

The open book foliation of $D$ described in Step 1 is depicted in Figure~\ref{figC} where $N=4$. 
Since $R=R(\Gamma,\phi(\Gamma))$ contains $k$ puncture points, the portion of overtwisted disk $D$ constructed in this step intersects $L$ at $k$ times.

\begin{figure}[htbp]
\includegraphics*[width=60mm,bb= 211 546 399 726]{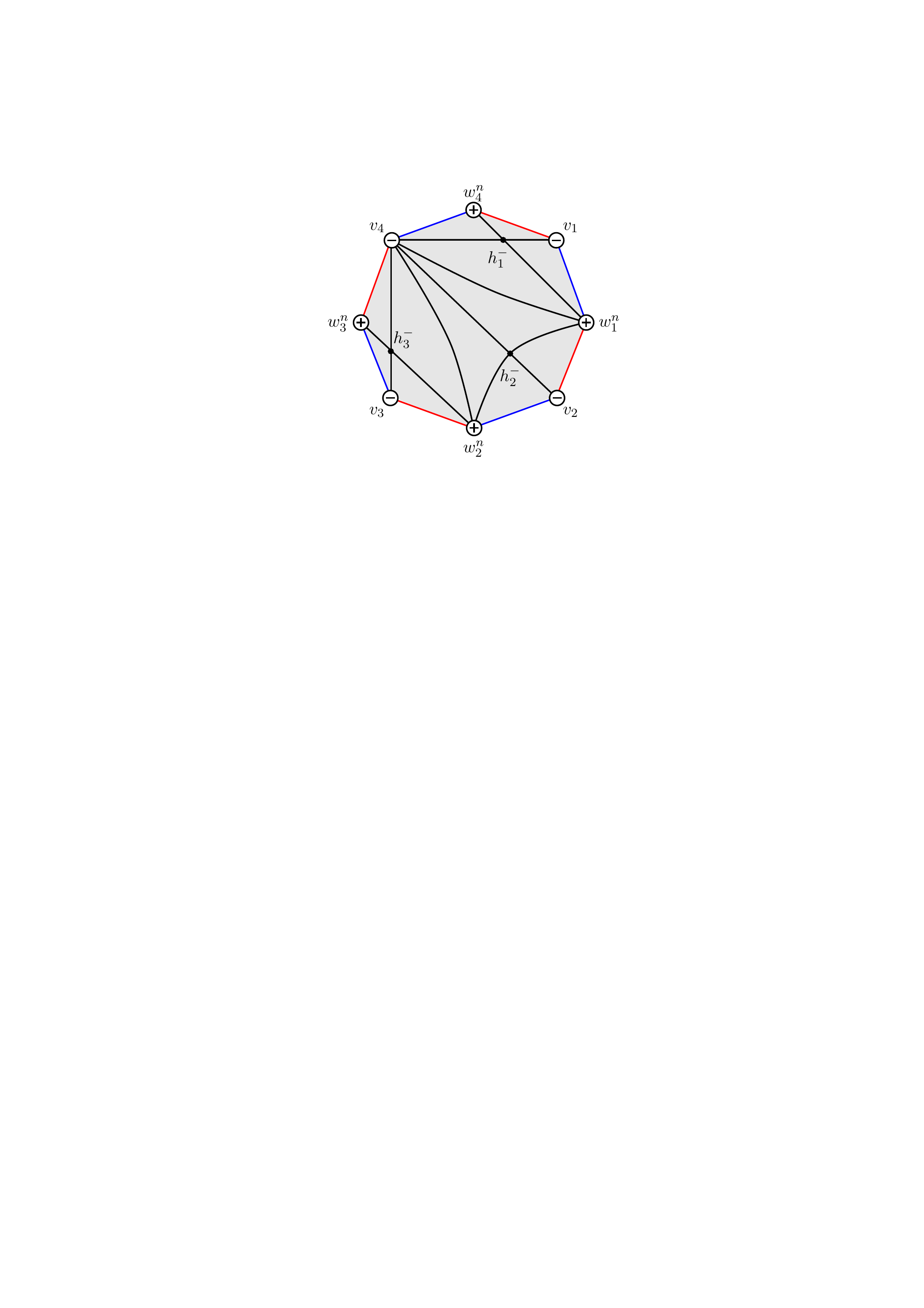}
\caption{The open book foliation for Step 1. (Fan shaped regions swept by a-arcs during Step 1 are not included here, but they are included in Figure~\ref{figD2} below.) If $k>0$ then $k$ puncture points should be added to it.} 
\label{figC}
\end{figure} 

(Step 2): 
Let $t_{N-1}<s_0<s_1<\cdots<s_{n-1}<1$. 
For each $j=0,\ldots, n-1$, we change the arc system $\Gamma_j=\left(b_{1}(s_j-\varepsilon),\ldots, b_{N}(s_j-\varepsilon)\right)$ into $\Gamma_{j+1}=(b_{1}(s_j+\varepsilon),\ldots, b_{N}(s_j+\varepsilon))$ by introducing $N$ positive hyperbolic points simultaneously whose describing arcs are parallel to $\gamma^{j+1}_{i}$ for $i=1,\dots,N$ and connecting $b_{i}(s_j-\varepsilon)$ and $a^{j+1}_{i}(s_j-\varepsilon)$, see Figure~\ref{figD1} for a movie presentation.
The open book foliation of the $t$-interval $[s_j-\varepsilon,s_{j+1}-\varepsilon]$ is the shaded region in the right sketch of Figure~\ref{figD2}.
\begin{figure}[htbp]
\includegraphics*[width=80mm, bb=184 592 435 717]{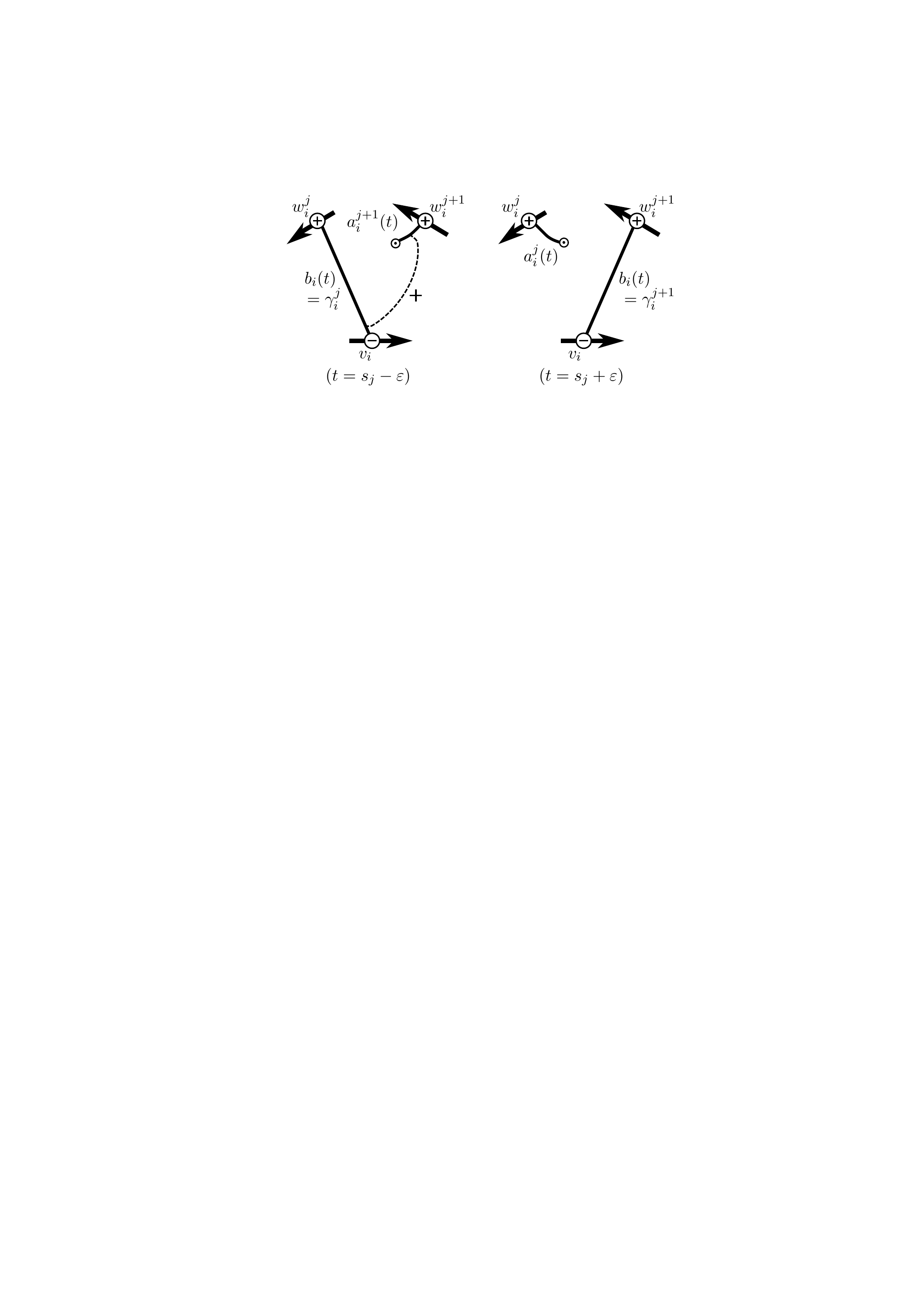}
\caption{Movie presentation for Step 2.} 
\label{figD1}
\end{figure} 
Since $\Gamma_j \disj \Gamma_{j+1}$ these describing arcs can be taken to be pairwise disjoint that enables us to introduce $N$ positive hyperbolic points simultaneously. 
No changes are made to the rest of the $a$-arcs (the ones from $w_i^{j'}$ with $j'\neq j, j+1$.)

In short, before and after $t=s_j$, for every $i=1,\ldots,N$ the b-arc for $v_i$ changes from $b_{i}(s_j-\varepsilon)=\gamma_{i}^j$ to $b_{i}(s_j+\varepsilon)=\gamma_{i}^{j+1}$ and we obtain: 
$$
D\cap S_{s_j+\epsilon} = \Gamma_{j+1} \cup \left\{ \mbox{ a-arcs } \right\}
$$

Eventually, at $t=1$ we obtain $(b_1(1),\ldots, b_N(1))=\Gamma_k=\Gamma$. 
Therefore the intersection of $D$ with the page $S_1$ consists of b-arcs $(b_1(1), \ldots, b_N(1))=\Gamma$ and a-arcs emanating from the positive elliptic points $w^{j}_i$ where $i=1,\ldots,N$ and $j=1,\ldots,n-1$. By the distinguished monodromy $\phi_L$ these arcs are mapped to the b-arcs $\phi_L(\Gamma)$ and a-arcs on the page $S_0$ to give a transverse overtwisted disk which has $N$ negative elliptic points.

During Step 2, since $P$ and leaves of the open book foliation of $D$ do not intersect, the portion of the overtwisted disk $D$ constructed in Step 2 is disjoint from $L$. Thus the resulting overtwisted disk $D$ intersects $L$ in total at $k$ times.

The left sketch in Figure \ref{figD2} depicts the whole open book foliation of $D$. 
\begin{figure}[htbp]
\includegraphics*[width=120mm, bb=127 549 483 714]{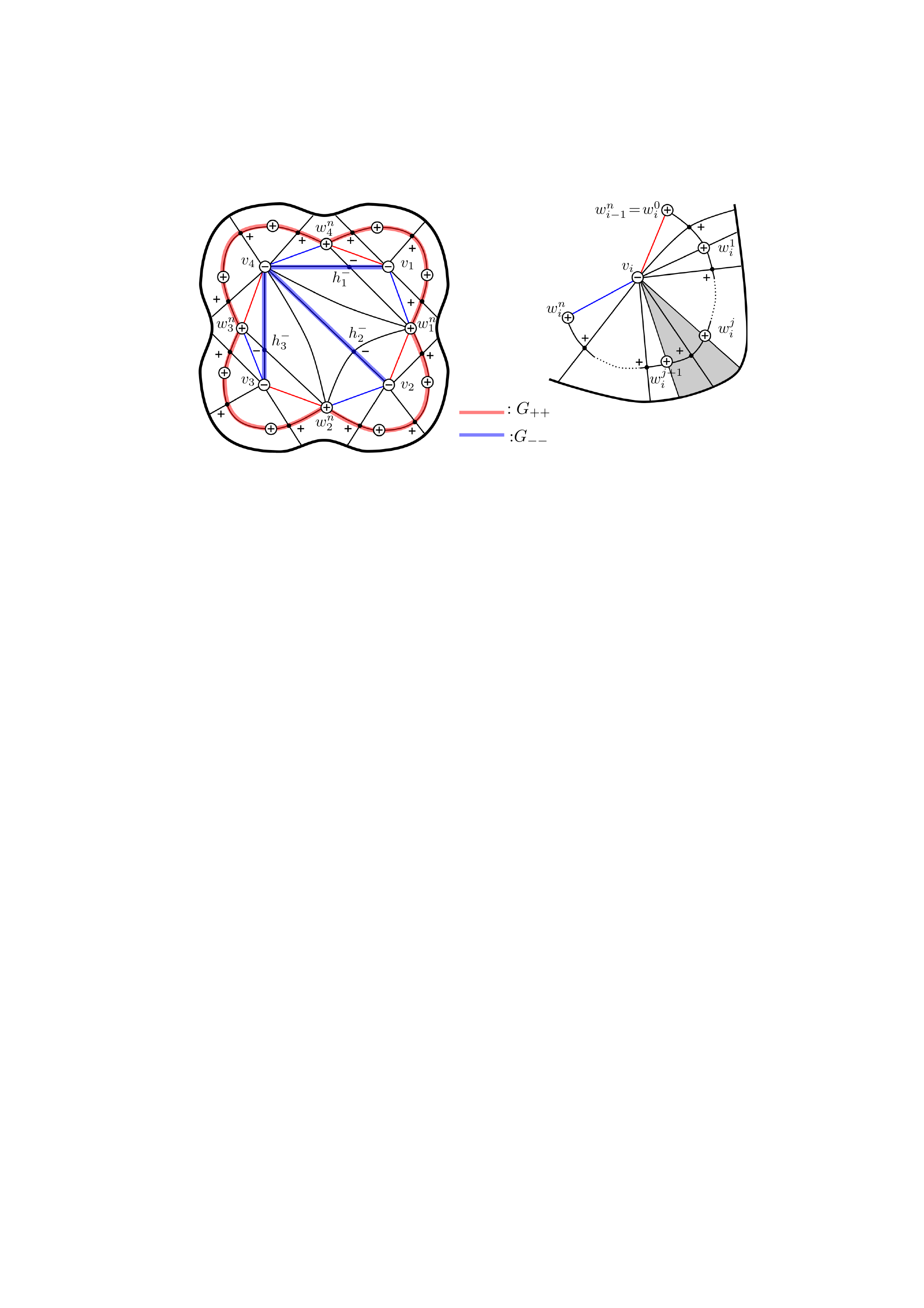}
\caption{The entire open book foliation of $D$ ($N=4$ case) and Step 2 construction.} 
\label{figD2}
\end{figure}
\end{proof}

\section{Application to depth of transverse links}
\label{section5}

Recall the depth of a transverse link defined by Baker and Onaran \cite{BO}: 

\begin{definition}\cite{BO}
Let $\mathcal T$ be a transverse link in an overtwisted contact 3-manifold $(M, \xi)$. 
The depth $\dep(\mathcal T;M)$ of $\mathcal T$ is defined by:
\[ \dep(\mathcal T;M) = \min \{\#(\mathcal T \cap D)  \: | \: D \text{ is an overtwisted disk in } (M,\xi) \} 
\]
\end{definition}
The depth measures non-looseness of transverse links. In particular, $\dep(\mathcal T;M)=0$ if and only if $\mathcal T$ is loose. 

In \cite{IK2} we have defined the \emph{overtwisted complexity} $n(S,\phi)$ of an open book $(S,\phi)$.
\begin{definition}\cite{IK2}. 
Let 
\[
n(S, \phi) = \min \left\{
e_-(D) \ \middle| \ D \subset M_{(S, \phi)}: \mbox{transverse overtwisted disk} \right\}
\]
where $e_-(D)$ is the number of negative elliptic points in the open book foliation of $D$. 
It is an invariant of the open book and we call it the {\em overtwisted complexity} of $(S, \phi)$. Recall that the binding $B_{(S, \phi)}$ is a transverse link. 
In \cite[Theorem 3]{IK-cover} it is shown that 
\begin{equation}\label{n=depth}
n(S, \phi)=\dep(B_{(S,\phi)};M_{(S, \phi)}).
\end{equation}  
\end{definition}

Theorem~\ref{theorem:non-Nrv-OT} gives upper bounds of the depth of closed braids and binding.

\begin{corollary}\label{cor:depth}
Let $(S,\phi)$ be an open book supporting an overtwisted contact structure, $L$ be a closed braid in $M_{(S,\phi)}$, and $B_{(S,\phi)}$ be the binding.
\begin{itemize}
\item[(a)] $\dep(B_{(S,\phi)};M_{(S, \phi)})\leq  \min\{N\: | \: \phi \mbox{ is } N\mbox{-twist left-veering}\}$.
\item[(b)] $\dep(B_{(S, \phi)};M_{(S, \phi)}\setminus L) \leq  \min\{N\: | \: \phi_L \mbox{ is } (N,0)\mbox{-twist left-veering} \}$
\item[(c)] $\dep(L;M_{(S, \phi)})\leq \min\{k\: | \: \phi_L \mbox{ is } (N,k)\mbox{-twist left-veering for some } N \}.$
\item[(d)] $\dep(L\cup B_{(S,\phi)};M_{(S, \phi)})\leq \min\{N+k\: | \: \phi_L \mbox{ is } (N,k)\mbox{-twist left-veering}\}.$ 
\end{itemize}
\end{corollary}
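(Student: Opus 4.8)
The plan is to read off all four inequalities directly from Theorem~\ref{theorem:non-Nrv-OT} together with the definition of the depth; essentially no new geometric input is required. In each item, if the index set on the right-hand side is empty we interpret $\min\emptyset=+\infty$, making the bound vacuous, so I may assume throughout that the relevant index set is nonempty.

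For (a) I would fix $N$ attaining $\min\{N\mid\phi\text{ is }N\text{-twist left-veering}\}$; Theorem~\ref{theorem:non-Nrv-OT}(1) then supplies an overtwisted disk $D\subset M_{(S,\phi)}$ with $\#(D\cap B_{(S,\phi)})=N$, whence $\dep(B_{(S,\phi)};M_{(S,\phi)})\le\#(D\cap B_{(S,\phi)})=N$ straight from the definition of depth (equivalently, the transverse overtwisted disk constructed there has $N$ negative elliptic points, so $n(S,\phi)\le N$ and \eqref{n=depth} applies). Likewise, for (c) and (d) I would fix $N,k$ with $\phi_L$ $(N,k)$-twist left-veering and apply Theorem~\ref{theorem:non-Nrv-OT}(2) to obtain an overtwisted disk $D$ with $\#(D\cap B_{(S,\phi)})=N$ and $\#(D\cap L)=k$: then $\#\bigl(D\cap(L\cup B_{(S,\phi)})\bigr)=N+k$ gives (d), and $\#(D\cap L)=k$ gives (c). Minimizing over the relevant indices in each case yields the stated bounds.

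For (b) the same disk is used with $k=0$: by Theorem~\ref{theorem:non-Nrv-OT}(2), $\phi_L$ being $(N,0)$-twist left-veering produces an overtwisted disk $D$ with $\#(D\cap B_{(S,\phi)})=N$ and $D\cap L=\emptyset$. Since $D$ avoids $L$, it lies in $M_{(S,\phi)}\setminus L$ and is an overtwisted disk there for the restricted contact structure; in particular $M_{(S,\phi)}\setminus L$ is overtwisted, so $\dep(B_{(S,\phi)};M_{(S,\phi)}\setminus L)$ is defined and is at most $\#(D\cap B_{(S,\phi)})=N$. Minimizing over admissible $N$ gives (b).

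The argument is pure bookkeeping, so the only delicate points are already settled elsewhere: that the object produced by Theorem~\ref{theorem:non-Nrv-OT} is a genuine (not merely transverse) overtwisted disk realizing the asserted geometric intersection numbers with $B_{(S,\phi)}$ and $L$ — this is part of that theorem's statement, obtained via the shrinking procedure of \cite[Theorem 3]{IK-cover} — and the trivial observation in (b) that an overtwisted disk disjoint from $L$ restricts to an overtwisted disk in the complement. I therefore expect no real obstacle; all the substance of the corollary is carried by Theorem~\ref{theorem:non-Nrv-OT}.
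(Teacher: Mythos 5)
Your proof is correct and matches the paper's (implicit) argument exactly: the paper states Corollary~\ref{cor:depth} immediately after Theorem~\ref{theorem:non-Nrv-OT} without a separate proof, since it follows by the same bookkeeping you describe — take the overtwisted disk supplied by the theorem, count its intersections with $B_{(S,\phi)}$ and $L$ (noting the disk avoids $L$ when $k=0$ so it lives in the complement), and invoke the definition of depth.
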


\begin{question}
\label{conj:depth}
Can the inequalities (a), \dots, (d) in Corollary~\ref{cor:depth} be equalities?

\end{question}

\begin{remark}
In some cases $\dep(L\cup B_{(S,\phi)};M_{(S, \phi)}) = \dep(B_{(S,\phi)};M_{(S, \phi)}\setminus L)$ but in general 
\begin{equation}\label{eq:depth-difference}
\dep(B_{(S,\phi)};M_{(S, \phi)}\setminus L)-\dep(L\cup B_{(S,\phi)};M_{(S, \phi)})\geq 0
\end{equation}
and the difference can be arbitrary large. 
\end{remark}

The next lemma concerns the equality of (\ref{eq:depth-difference}).

\begin{lemma}
\label{lemma:depthcomparison}
Suppose that $N \leq \dep(B_{(S,\phi)};M_{(S, \phi)})$. We have $\dep(L\cup B_{(S,\phi)};M_{(S, \phi)})=N$ if and only if $\dep(B_{(S,\phi)};M_{(S, \phi)}\setminus L)=N$.
In particular, $\dep(L\cup B_{(S,\phi)};M_{(S, \phi)})=1$ if and only if $\dep(B_{(S,\phi)};M_{(S, \phi)}\setminus L)=1$.
\end{lemma}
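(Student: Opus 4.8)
The plan is to deduce the lemma from three elementary observations about how overtwisted disks meet $L$ and $B_{(S,\phi)}$, together with the identity (\ref{n=depth}).

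First I would record the chain of inequalities
\[
\dep(B_{(S,\phi)};M_{(S,\phi)}) \;\le\; \dep(L\cup B_{(S,\phi)};M_{(S,\phi)}) \;\le\; \dep(B_{(S,\phi)};M_{(S,\phi)}\setminus L).
\]
The right inequality holds because an overtwisted disk realizing $\dep(B_{(S,\phi)};M_{(S,\phi)}\setminus L)$ is disjoint from $L$, hence meets $L\cup B_{(S,\phi)}$ exactly along $B_{(S,\phi)}$. The left inequality holds because for every overtwisted disk $D\subset M_{(S,\phi)}$ one has $\#((L\cup B_{(S,\phi)})\cap D)=\#(L\cap D)+\#(B_{(S,\phi)}\cap D)\ge \#(B_{(S,\phi)}\cap D)$, using that a closed braid $L$ is by definition disjoint from the binding.

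Then for the ``if'' direction I would argue: if $\dep(B_{(S,\phi)};M_{(S,\phi)}\setminus L)=N$, the displayed chain forces $\dep(B_{(S,\phi)};M_{(S,\phi)})\le N$, and combined with the hypothesis $N\le\dep(B_{(S,\phi)};M_{(S,\phi)})$ this gives $\dep(B_{(S,\phi)};M_{(S,\phi)})=N$; feeding this back into the chain pins $\dep(L\cup B_{(S,\phi)};M_{(S,\phi)})=N$. For the ``only if'' direction: if $\dep(L\cup B_{(S,\phi)};M_{(S,\phi)})=N$, the same squeeze gives $\dep(B_{(S,\phi)};M_{(S,\phi)})=N$; now pick an overtwisted disk $D$ with $\#((L\cup B_{(S,\phi)})\cap D)=N$. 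Since $\#(B_{(S,\phi)}\cap D)\ge\dep(B_{(S,\phi)};M_{(S,\phi)})=N$, the additivity above forces $\#(L\cap D)=0$ and $\#(B_{(S,\phi)}\cap D)=N$, so $D$ lies in $M_{(S,\phi)}\setminus L$ and shows $\dep(B_{(S,\phi)};M_{(S,\phi)}\setminus L)\le N$; the chain then upgrades this to equality. Finally, the ``in particular'' clause is the case $N=1$: here the hypothesis $1\le\dep(B_{(S,\phi)};M_{(S,\phi)})$ is automatic, since by (\ref{n=depth}) this depth equals $n(S,\phi)$ and every transverse overtwisted disk carries at least one negative elliptic point, so $n(S,\phi)\ge 1$.

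I do not expect a genuine obstacle here: the argument is essentially bookkeeping with the depth inequalities. The only points that deserve a careful sentence are the additivity $\#((L\cup B_{(S,\phi)})\cap D)=\#(L\cap D)+\#(B_{(S,\phi)}\cap D)$, which rests on $L\cap B_{(S,\phi)}=\emptyset$ (part of the definition of a closed braid with respect to $(S,\phi)$), and the elementary bound $n(S,\phi)\ge 1$ invoked for the last clause.
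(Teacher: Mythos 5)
Your proof is correct and follows essentially the same route as the paper: both rely on the three-term chain $\dep(B;M)\le\dep(L\cup B;M)\le\dep(B;M\setminus L)$ and a squeeze argument, with the $(\Rightarrow)$ direction using a minimizing disk to show the intersection with $L$ must be empty. Your extra sentence justifying $\dep(B_{(S,\phi)};M_{(S,\phi)})\ge 1$ via $n(S,\phi)\ge 1$ for the ``in particular'' clause is a reasonable addition that the paper leaves implicit.
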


\begin{proof}
Let $B:=B_{(S, \phi)}$ and $M:=M_{(S, \phi)}$. 

($\Leftarrow$): 
If $\dep(B; M\setminus L)=N$ then 
by 
$$
N\leq\dep(B;M) \leq  \dep(L\cup B;M) \leq \dep(B;M\setminus L)=N
$$
we get
$\dep(L\cup B;M)=N$. 

($\Rightarrow$): 
Conversely, assume that $\dep(L\cup B;M)=N$. There exists an overtwisted disk $D$ that intersects $L\cup B$ at $N$ points.
By 
$$
\dep(B; M) \leq \dep(L\cup B ; M) = N \leq \dep(B;M)
$$
all the $N$ intersection points belong to $B$ and $D \subset M\setminus L$.
Thus $\dep(B ; M\setminus L) \leq |B \cap D|=N$. 
By (\ref{eq:depth-difference}) we get
$$
N= \dep(L\cup B ; M) \leq \dep(B ; M\setminus L) \leq N,$$
done.
\end{proof}

The answers to Question~\ref{conj:depth} for (a),(b) and (d) are affirmative for the depth $1$ case:

\begin{proposition}\cite[Corollary 1]{IK-cover} \cite[Theorem 5.5]{IK-qveer} 
\label{prop:depth1}
Suppose that $\xi_{(S, \phi)}$ is overtwisted. 
Let $L$ be a closed braid in $M_{(S, \phi)}.$
\begin{itemize}
\item
$\dep(B_{(S,\phi)};M_{(S, \phi)})=1$ if and only if $\phi$ is non-right-veering $($i.e. $1$-twist left-veering$).$ 
\item
$\dep(L\cup B_{(S,\phi)};M_{(S, \phi)})=\dep(B_{(S,\phi)};M_{(S, \phi)} \setminus L) = 1$ if and only if $\phi_L$ is non-quasi-right-veering $($i.e., $(1,0)$-twist left-veering$).$ 
\end{itemize}
\end{proposition}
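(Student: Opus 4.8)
I would prove the two biconditionals by treating their ``if'' and ``only if'' halves separately, using Remark~\ref{remark:N=1} throughout to read ``non-right-veering'' as ``$1$-twist left-veering'' and ``non-quasi-right-veering'' as ``$(1,0)$-twist left-veering''. Write $B:=B_{(S,\phi)}$ and $M:=M_{(S,\phi)}$. The ``if'' directions follow formally from what is already available. If $\phi$ is $1$-twist left-veering, Theorem~\ref{theorem:non-Nrv-OT}(1) supplies an overtwisted disk meeting $B$ in a single point, so $\dep(B;M)\leq 1$; combined with $\dep(B;M)=n(S,\phi)\geq 1$, which holds by (\ref{n=depth}) since the open book foliation of every transverse overtwisted disk contains at least one negative elliptic point, this gives $\dep(B;M)=1$. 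If $\phi_L$ is $(1,0)$-twist left-veering, Theorem~\ref{theorem:non-Nrv-OT}(2) with $(N,k)=(1,0)$ supplies an overtwisted disk $D$ disjoint from $L$ with $|D\cap B|=1$; then $M\setminus L$ is overtwisted, $\dep(B;M\setminus L)\leq 1$, $\dep(L\cup B;M)\leq 1$, and plugging these into the inequalities $\dep(B;M)\leq\dep(L\cup B;M)\leq\dep(B;M\setminus L)$ extracted from the proof of Lemma~\ref{lemma:depthcomparison}, along with $\dep(B;M)\geq 1$, forces both of those depths to equal $1$.

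For the ``only if'' directions I would reverse the open book foliation construction used in Theorem~\ref{theorem:non-Nrv-OT}. Suppose $\dep(B;M)=1$; by (\ref{n=depth}), $n(S,\phi)=1$, so there is a transverse overtwisted disk $D$ whose open book foliation has a single negative elliptic point $v$, which lies on $B$ in each page. After putting $D$ into a taut normal form adapted to the fibration $\pi$, the slice $D\cap S_t$ contains, for every $t$, a single b-arc $b(t)$ incident to $v$ (the other leaves being a-arcs that do not matter near $v$). Let $\gamma\subset S$ be the $p_1$-image of the b-arc in the page $S_1$; since $p_0=\phi\circ p_1$ on the identified page $S_0=S_1$, the $p_0$-image of the b-arc in $S_0$ is $\phi(\gamma)$. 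The heart of the argument is that tautness of $D$ forces the family $\{b(t)\}$ to move to the right at $v$ as $t$ increases from $0$ to $1$, so that $\phi(\gamma)$ lies to the left of $\gamma$ at $v$, i.e.\ $\phi(\gamma)\prec_\ri\gamma$; hence $\phi$ is non-right-veering. For the punctured statement, Lemma~\ref{lemma:depthcomparison} lets me replace the hypothesis $\dep(L\cup B;M)=\dep(B;M\setminus L)=1$ by the single condition $\dep(B;M\setminus L)=1$, so $D$ may be chosen inside $M\setminus L$; then the family $\{b(t)\}$ avoids the punctures $P$ throughout, and the successive rightward moves assemble into an interpolating sequence witnessing the strong inequality $\phi_L(\gamma)\ll_\ri\gamma$, i.e.\ $\phi_L$ is non-quasi-right-veering. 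These two normal-form analyses are exactly \cite[Corollary 1]{IK-cover} and \cite[Theorem 5.5]{IK-qveer}, which I would cite for the technical details.

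The main obstacle is this ``only if'' normal-form step. Reducing a minimal transverse overtwisted disk (one realizing $e_-=1$) to a controlled single b-arc family, and then controlling the hyperbolic singularities this family crosses well enough to conclude that it advances to the right at $v$, is genuine work and is precisely where the input of \cite{IK-cover} and \cite{IK-qveer} is needed; in the punctured case there is the additional point that disjointness of $D$ from $L$ must be leveraged to upgrade the resulting $\prec_\ri$-inequality to the strong ordering $\ll_\ri$. By contrast, the ``if'' directions are formal consequences of Theorem~\ref{theorem:non-Nrv-OT}, Lemma~\ref{lemma:depthcomparison}, and the identity (\ref{n=depth}).
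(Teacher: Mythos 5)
Your proposal is correct and consistent with the paper's treatment: the paper gives no proof of Proposition~\ref{prop:depth1}, importing it wholesale from \cite{IK-cover} and \cite{IK-qveer}, and you likewise defer the hard ``only if'' normal-form analysis to those same references. Your rederivation of the ``if'' directions from Theorem~\ref{theorem:non-Nrv-OT}, the identity (\ref{n=depth}), and the chain $\dep(B;M)\leq\dep(L\cup B;M)\leq\dep(B;M\setminus L)$ is sound and non-circular (Theorem~\ref{theorem:non-Nrv-OT} with $N=1$ only uses \cite[Theorem 4.4]{IK-qveer}, not the depth result), and the heuristic for the ``only if'' direction matches the open-book-foliation argument the authors carry out explicitly in the depth-$2$ case in Theorem~\ref{prop:depth2}.
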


Here is another fact supporting the affirmative answer to  Question~\ref{conj:depth}. We prove equalities (a) and (b) for the depth 2 case.

\begin{theorem}\label{prop:depth2}
Suppose that $\xi_{(S, \phi)}$ is overtwisted. Let $L$ be a closed braid in $M_{(S, \phi)}.$
\begin{itemize}
\item
$\dep( B_{(S,\phi)};M_{(S, \phi)})= 2$ if and only if $\phi$ is right-veering and 2-twist left-veering.
\item
$\dep(B_{(S,\phi)};M_{(S, \phi)}\setminus L)= 2$ if and only if $\phi_L$ is quasi-right-veering and $(2,0)$-twist left-veering.
\end{itemize}
\end{theorem}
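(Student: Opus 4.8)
The plan is to obtain each ``if'' direction from the depth-one characterization (Proposition~\ref{prop:depth1}) together with the upper bounds of Corollary~\ref{cor:depth}, and each ``only if'' direction by analyzing a transverse overtwisted disk of minimal complexity and reversing the construction in the proof of Theorem~\ref{theorem:non-Nrv-OT}. Write $B=B_{(S,\phi)}$ and $M=M_{(S,\phi)}$. The two bullets run in parallel, the second one only adding the bookkeeping of the braid $L$ and the punctures $P$, so I describe the argument for the binding and indicate the modifications for $L$.

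\emph{The ``if'' directions.} Since $\xi_{(S,\phi)}$ is overtwisted, $\dep(B;M)=n(S,\phi)\geq 1$ by (\ref{n=depth}), as every transverse overtwisted disk carries at least one negative elliptic point. If $\phi$ is right-veering then Proposition~\ref{prop:depth1} forbids $\dep(B;M)=1$, so $\dep(B;M)\geq 2$; if moreover $\phi$ is $2$-twist left-veering then Corollary~\ref{cor:depth}(a) gives $\dep(B;M)\leq 2$, hence equality. For the second bullet, $\dep(B;M\setminus L)\geq\dep(B;M)\geq 1$, and if $\phi_L$ is quasi-right-veering then $\dep(B;M\setminus L)\neq 1$ by Proposition~\ref{prop:depth1} together with Lemma~\ref{lemma:depthcomparison}; combined with the bound $\dep(B;M\setminus L)\leq 2$ from Corollary~\ref{cor:depth}(b) this yields equality.

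\emph{The ``only if'' directions.} Suppose $\dep(B;M)=2$. Since $\dep(B;M)\neq 1$, Proposition~\ref{prop:depth1} shows $\phi$ is right-veering, and it remains to produce a $2$-arc system certifying that $\phi$ is $2$-twist left-veering. By (\ref{n=depth}) and the shrinking procedure of \cite[Theorem~3]{IK-cover} there is a transverse overtwisted disk $D$ with exactly two negative elliptic points $v_1,v_2$; among all such disks choose one with the fewest hyperbolic points. In the movie $\{D\cap S_t\}_{t\in S^1}$ there are two $b$-arcs $b_1(t),b_2(t)$ with endpoints at $v_1,v_2$, evolving through the hyperbolic points of $D$. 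I would normalize the foliation so that every negative hyperbolic point precedes every positive one along $S^1$, set $\Gamma:=\big(b_1(1),b_2(1)\big)$ on the page just before the monodromy identification (so that $D\cap S_0$ recovers $\phi(\Gamma)$), and then check: (a) passing through the negative hyperbolic points turns $\phi(\Gamma)$ into the left twist $\phi^{tw}(\Gamma)$ and exhibits the boundary based region $R(\Gamma,\phi(\Gamma))$, of type $(2,0)$ since $P=\emptyset$; (b) passing through the positive hyperbolic points realizes an interpolating chain $\phi^{tw}(\Gamma)=\Gamma_0\disj\Gamma_1\disj\cdots\disj\Gamma_n=\Gamma$, so $\phi^{tw}(\Gamma)\ll_\ri\Gamma$. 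Note $R(\Gamma,\phi(\Gamma))\neq\emptyset$ is automatic here, since $R(\Gamma,\phi(\Gamma))=\emptyset$ together with $\phi^{tw}(\Gamma)\ll_\ri\Gamma$ would force $\phi$ to be non-right-veering. For the second bullet one runs the same extraction on a reduced transverse overtwisted disk $D\subset M\setminus L$ with $e_-(D)=2$ furnished by $\dep(B;M\setminus L)=2$; because $D\cap L=\emptyset$ the sub-disk of $D$ realizing $R(\Gamma,\phi(\Gamma))$ is disjoint from the punctures $P$, so $R(\Gamma,\phi(\Gamma))$ has type $(2,0)$ and $\phi_L$ is $(2,0)$-twist left-veering, while $\phi_L$ is quasi-right-veering since $\dep(B;M\setminus L)\neq 1$.

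\emph{Main obstacle.} The technical heart is the normalization step and part (b): for an arbitrary transverse overtwisted disk the positive and negative hyperbolic points may be interleaved along $S^1$, and the two $b$-arc families may be mutually linked on intermediate pages in a way that yields only $\Gamma_j\prec_\ri\Gamma_{j+1}$, not the disjointness $\Gamma_j\cap\Gamma_{j+1}=\mathcal B$ required for $\disj$ --- exactly the obstruction illustrated in Example~\ref{example:prec-does-not-imply-ll} and Proposition~\ref{prop:Robstruction}. One has to show that for a disk with the minimal number of hyperbolic points no such configuration survives: any interleaving or linking can be undone by an isotopy of $D$ rel.\ binding that does not increase the hyperbolic count, contradicting minimality. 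Proving this ``canonical form for transverse overtwisted disks with $e_-=2$'' is the main work; for the braided statement one must in addition arrange these isotopies to take place in $M\setminus L$.
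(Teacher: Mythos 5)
Your overall strategy matches the paper's: the ``if'' directions are exactly the paper's argument (Corollary~\ref{cor:depth} plus Proposition~\ref{prop:depth1}, together with Lemma~\ref{lemma:depthcomparison} for the braid statement), and for the ``only if'' direction the idea of reading off a $2$-arc system from the $b$-arcs of a transverse overtwisted disk with two negative elliptic points and reversing the construction of Theorem~\ref{theorem:non-Nrv-OT} is also the paper's. However, you have correctly flagged, and then left open, the step that carries the whole argument. The ``main obstacle'' you describe — interleaving of positive and negative hyperbolic points, and the possibility that consecutive intermediate arc systems are only $\prec_\ri$-ordered rather than $\disj$-ordered — is not resolved by a minimality/isotopy normalization in the paper, and your sketch gives no reason such an isotopy argument should succeed. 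This is a genuine gap.

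What you are missing is a structural fact about transverse overtwisted disks: the graph $G_{-,-}$ (vertices the negative elliptic points, edges the negative hyperbolic points) is a tree. With $e_-(D)=2$ this forces $D$ to have exactly one negative hyperbolic point $h_-$, which is joined to both $v_1$ and $v_2$ by a singular leaf. Once this is known, the ``normalization'' you worry about is trivial: place the unique $h_-$ in the first singular page $S_{t_1}$; all the remaining singular pages carry one positive hyperbolic point each. The chain $\Gamma_{t_1+\epsilon}\disj\Gamma_{t_2+\epsilon}\disj\cdots\disj\Gamma_1$ then follows from standard open book foliation facts, not from any delicate minimality argument: at each time $t$ the two $b$-arcs $b_1(t),b_2(t)$ are disjoint leaves of $D\cap S_t$, and passing a single positive hyperbolic point moves exactly one $b$-arc to the right onto an arc disjoint from the old one; hence consecutive $\Gamma$'s meet only in $\mathcal B$. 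Your concern about ``linking'' of the two $b$-arc families therefore does not arise. Finally, the boundary based region $R(\Gamma,\phi_L(\Gamma))$ is exhibited directly in a neighborhood of the describing arc of $h_-$, with $\phi_L^{tw}(\Gamma)=\Gamma_{t_1+\epsilon}$, and it is unpunctured because $D\subset M\setminus L$. Without the $G_{-,-}$ tree fact (and the veering/disjointness property of positive hyperbolic points) your argument does not close, so as written the proposal is incomplete.
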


\begin{proof}
We prove the second statement as the first statement is a special case of the second. 

($\Rightarrow$)
Assume that $\dep(B_{(S,\phi)};M_{(S, \phi)} \setminus L)= 2$.
By Proposition \ref{prop:depth1} we know that $\phi_L$ is quasi-right-veering. Thus, it is enough to show that $\phi_L$ is $(2,0)$-twist left-veering. 

There exists a transverse overtwisted disk $D \subset M_{(S, \phi)}\setminus L$ with exactly two negative elliptic points, which we call $v_1$ and $v_2$. 
For $t\in[0,1]$ and $i=1,2$, let $b_i(t)$ denote the b-arc (if exists) in $D\cap S_t$ that ends at $v_i$.

Since the graph $G_{-,-}$ of any transverse overtwisted disk is a tree, $D$ has exactly one negative hyperbolic point, $h_-$, that is connected to $v_1$ and $v_2$ by a singular leaf.
Let $\{S_{t_n}\ |\ n=1,\cdots, k\}$ where $0<t_1<t_2<\cdots <t_k <1$ be the set of singular pages. For a transverse overtwisted disk with two negative elliptic points, $k\geq 4$. 
Assume that the unique negative hyperbolic point $h_-$ lies in $S_{t_1}$ and each of the pages $S_{t_2}, \ldots, S_{t_k}$ contains one positive hyperbolic point.

For $t\neq t_1,\ldots,t_k$ let 
$\Gamma_t:=(b_1(t), b_2(t))$ be a $2$-arc system with the base $\mathcal B=\{v_1, v_2\}$ consisting of the b-arcs of $D$ in the page $S_t$ (strictly speaking it should be $\Gamma_t:=(p_t(b_1(t)), p_t(b_2(t)))$ where $p_t : S_t \to S$ is the canonical diffeomorphism defined in Section~\ref{section2.1}).
Then we have $\Gamma_0=\phi_L(\Gamma_1)$. Since each positive hyperbolic point veers a b-arc to the right with respect to the negative elliptic point $v_i$ and the resulting arc is disjoint from the original one, we have
\[
\Gamma_{t_i-\epsilon} \disj \Gamma_{t_i + \epsilon} = \Gamma_{t_{i+1} - \epsilon}
\]
therefore, 
\[ \Gamma_{t_1+\epsilon} \disj \Gamma_{t_2+\epsilon} \disj \cdots \disj \Gamma_{t_k+\epsilon}=\Gamma_1.
\] 

We may assume that the end points of the describing arc for the hyperbolic point $h_-$ are very close to $v_1$ and $v_2$ (see the left sketch in Figure~\ref{fig:depth2}).  
The boundary based 4-gon $R(\Gamma,\phi_L(\Gamma))$ exists in a neighborhood of the describing arc (the shaded region in the 4th sketch of Figure~\ref{fig:depth2}).  
This shows that $\phi_L^{tw}(\Gamma_1)=\Gamma_{t_1+\epsilon}$ hence 
\[\phi_L^{tw}(\Gamma_1)= \Gamma_{t_1+\epsilon}\ll_{\sf right} \Gamma_1.\] 
The 4-gon $R(\Gamma,\phi_L(\Gamma))$ is not punctured since $L$ is disjoint from the overtwisted disk $D$;
i.e., $\phi_L$ is $(2,0)$-twist left-veering. 
\begin{figure}[htbp]
\includegraphics*[width=130mm,bb=  99 567 508 719]{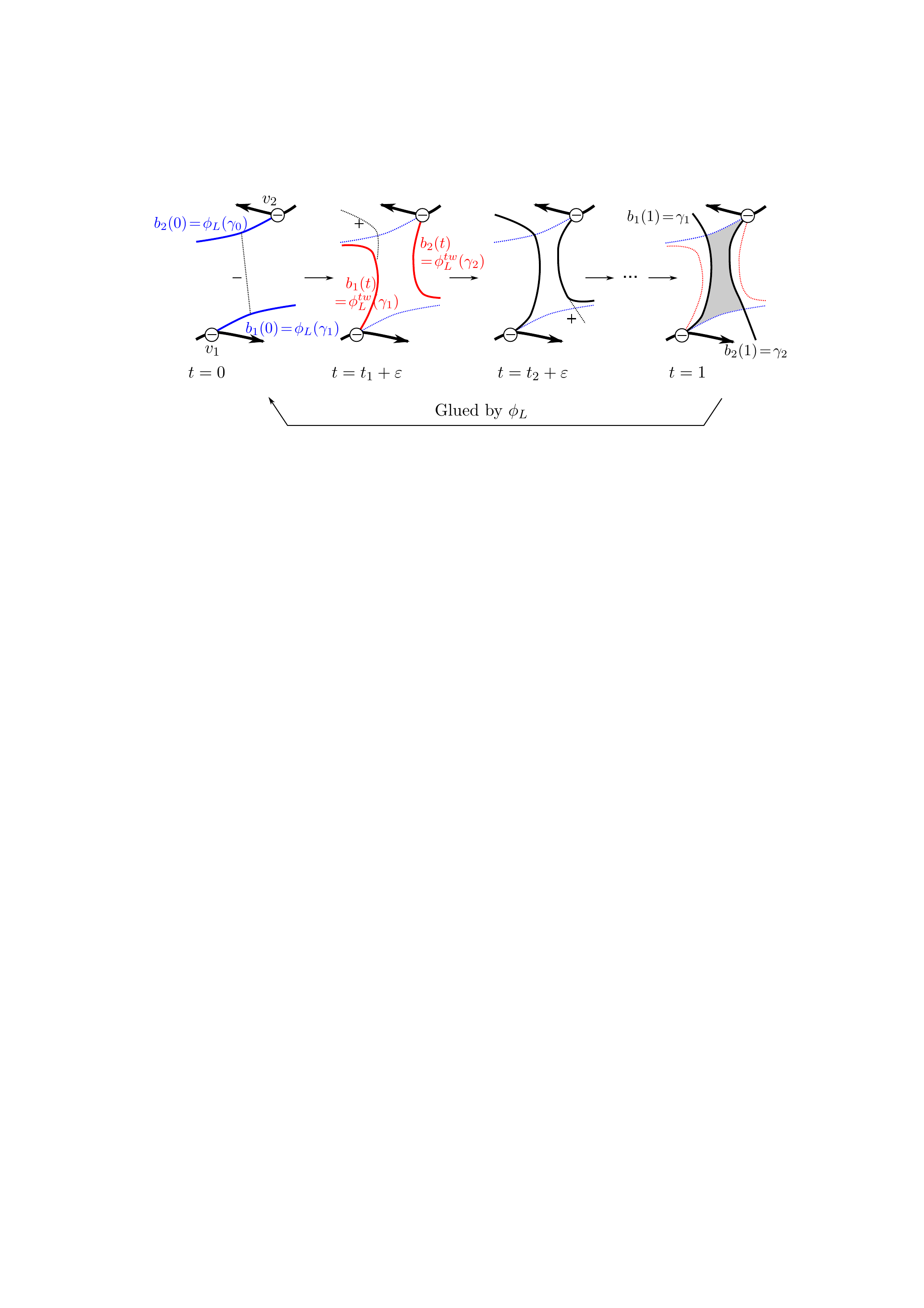}
\caption{Movie presentation of a transverse overtwisted disk with two negative elliptic points.} 
\label{fig:depth2}
\end{figure} 

($\Leftarrow$)
This implication follows by Corollary~\ref{cor:depth} and Proposition~\ref{prop:depth1}, 
\end{proof}
 
\begin{corollary}
If $\phi$ is right-veering and $\xi_{(S, \phi)}$ is overtwisted then  
$\dep(L\cup B_{(S, \phi)};M_{(S, \phi)})=2$ if and only if $\phi_L$ is quasi-right-veering and $(2,0)$-twist left-veering. 
\end{corollary}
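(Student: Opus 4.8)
The plan is to deduce this corollary from Theorem~\ref{prop:depth2} together with the depth--comparison Lemma~\ref{lemma:depthcomparison}, using Proposition~\ref{prop:depth1} once to pin down a lower bound on the depth of the binding. In other words, the corollary is meant to be a formal consequence of results already in place, and the only nontrivial point is to set up the hypotheses of Lemma~\ref{lemma:depthcomparison} correctly.

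First I would extract the content of the standing assumptions. Since $\xi_{(S,\phi)}$ is overtwisted, $\dep(B_{(S,\phi)};M_{(S,\phi)})$ is a well-defined positive integer; since $\phi$ is right-veering, Proposition~\ref{prop:depth1} excludes the value $1$. Hence
\[
\dep(B_{(S,\phi)};M_{(S,\phi)}) \geq 2 .
\]
This is the only place where the right-veering hypothesis enters, and it is exactly the inequality needed to invoke Lemma~\ref{lemma:depthcomparison} with $N=2$.

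Next I would apply Lemma~\ref{lemma:depthcomparison} with $N=2$ (legitimate by the inequality above), which yields the biconditional
\[
\dep(L\cup B_{(S,\phi)};M_{(S,\phi)}) = 2 \iff \dep(B_{(S,\phi)};M_{(S,\phi)}\setminus L) = 2 ,
\]
and then I would invoke the second bullet of Theorem~\ref{prop:depth2} (whose hypothesis that $\xi_{(S,\phi)}$ is overtwisted is part of our assumptions) to rewrite the right-hand side as the statement that $\phi_L$ is quasi-right-veering and $(2,0)$-twist left-veering. Chaining the two equivalences gives the corollary.

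I do not anticipate a real obstacle here; the argument is a bookkeeping exercise once the earlier theorems are granted. The one step that genuinely requires care is verifying $\dep(B_{(S,\phi)};M_{(S,\phi)}) \geq 2$ \emph{before} applying Lemma~\ref{lemma:depthcomparison}: without the right-veering assumption the binding could have depth $1$, in which case the lemma only produces a one-sided conclusion about $\dep(L\cup B_{(S,\phi)};M_{(S,\phi)})$ and the claimed equivalence can fail, so it is worth spelling this out explicitly rather than leaving it implicit.
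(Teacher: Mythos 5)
Your proof is correct and follows essentially the same route as the paper: use Proposition~\ref{prop:depth1} to conclude from right-veering that $\dep(B_{(S,\phi)};M_{(S,\phi)})\geq 2$, invoke Lemma~\ref{lemma:depthcomparison} with $N=2$ to pass between $\dep(L\cup B;M)$ and $\dep(B;M\setminus L)$, and finish with the second bullet of Theorem~\ref{prop:depth2}. The extra care you take to spell out why the right-veering hypothesis is needed before applying the lemma is the same logical content the paper compresses into its opening sentence.
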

 
\begin{proof}
By Proposition~\ref{prop:depth1}, the map $\phi$ is right-veering if and only if $2 \leq \dep(B_{(S, \phi)}, M_{(S, \phi)})$. 
By Lemma~\ref{lemma:depthcomparison}, $\dep(L\cup B_{(S, \phi)};M_{(S, \phi)})=2$ if and only if $\dep(B_{(S, \phi)}, M_{(S, \phi)}\setminus L)=2,$ which is equivalent to $\phi_L$ is quasi-right-veering and $(2,0)$-twist left-veering by  Theorem~\ref{prop:depth2}.
\end{proof}

\section{Connection to Wand's inconsistency}
\label{section6}

In this section we discuss the relation between $(N,0)$-twist left-veering and Wand's inconsistency. 
We begin with definitions of overtwisted region and inconsistent mapping class that Wand introduced in \cite[Definition 2.4]{Wand1}, where the puncture set $P$ is empty.

\begin{definition}[Wand]\label{def:OT-region}
Let $\phi\in\MCG(S)$ and $\Gamma\in\mathcal A_{\mathcal B}(S)$ an $N$-arc system with $N\geq 1$.  
An {\em overtwisted region}  in the augmented open book $(S, \phi, \Gamma)$ is a  $2N$-gon disk $A$ embedded in $S$ (when $N=1$ relaxing the condition that $\Gamma$ and $\phi(\Gamma)$ intersect efficiently, $A$ can be a bigon)
and $\partial A \subset (\Gamma \cup \phi(\Gamma))$, such that 
\begin{enumerate}
\item 
The orientation of $\partial A$ 
\begin{itemize}
\item
agrees with that of $\phi(\Gamma)$ and disagrees with that of $\Gamma$, or 
\item
disagrees with that of $\phi(\Gamma)$ and agrees with that of $\Gamma$.
\end{itemize}

\item
Each point of $\Gamma\cap\phi(\Gamma)\cap\Int(S)$ is a corner of $A$. Corners of $A$ alternate between points in $\partial \Gamma= \mathcal B \cup \Gamma(1)$ and points in $\Gamma\cap\phi(\Gamma)\cap\Int(S)$. 

\item
$A$ is the unique such disk.
\end{enumerate}
\end{definition}

As the name suggests, Wand showed that:
\begin{proposition}
If $(S, \phi, \Gamma)$ has an overtwisted region then $(S,\phi)$ supports an overtwisted contact structure. 
\end{proposition}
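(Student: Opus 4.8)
The plan is to derive the proposition from machinery already available: Theorem~\ref{theorem:non-Nrv-OT}(1), Proposition~\ref{prop:Robstruction}, Remark~\ref{remark:N=1}, Theorem~\ref{thm:R=A} (together with its $N=1$ analogue Proposition~\ref{prop:R=A}), and the Honda--Kazez--Mati\'c criterion recorded in Theorem~\ref{thm:previous}(1). In every case the target is the same statement: the existence of an overtwisted region in $(S,\phi,\Gamma)$ forces $\phi$ to be $N$-twist left-veering for some $N$; once that is known, Theorem~\ref{theorem:non-Nrv-OT}(1) produces an overtwisted disk in $(M_{(S,\phi)},\xi_{(S,\phi)})$ and we are done.

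First I would fix an overtwisted region $A$ in $(S,\phi,\Gamma)$ and split along the two orientation possibilities in condition~(1) of Definition~\ref{def:OT-region}. When $\partial A$ agrees with $\phi(\Gamma)$ and disagrees with $\Gamma$, I would identify $A$ with the boundary based region $R(\Gamma,\phi(\Gamma))$ of Definition~\ref{def-of-R}: the $2N$-gon shape, the orientation requirement, the alternation of corners between $\mathcal{B}$ and $\Gamma\cap\phi(\Gamma)\cap\Int(S)$, and the uniqueness clause all transcribe directly, and Wand's requirement that every interior intersection point of $\Gamma\cap\phi(\Gamma)$ be a corner of $A$ is exactly Definition~\ref{def-of-R}(i), that $\Int(A)$ be disjoint from $\phi(\Gamma)$. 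Since $P=\emptyset$ this region has type $(N,0)$, and since $A=R(\Gamma,\phi(\Gamma))$ is an overtwisted region, Theorem~\ref{thm:R=A} for $N\ge 2$ — or Proposition~\ref{prop:R=A} for $N=1$, using Wand's relaxed-efficiency convention — yields $\phi^{tw}(\Gamma)\ll_\ri\Gamma$, so $\phi$ is $(N,0)$-twist left-veering. (If one prefers to bypass Theorem~\ref{thm:R=A} here, the same conclusion follows from a short direct argument: because $A$ is embedded and carries no interior intersection points other than its corners, the left twist $\phi^{tw}(\Gamma)$ can be isotoped off $\Gamma$ to its left in a single step, so $\phi^{tw}(\Gamma)\disj\Gamma$.)

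When instead $\partial A$ agrees with $\Gamma$ and disagrees with $\phi(\Gamma)$, the region $A$ realizes the boundary based region $R(\phi(\Gamma),\Gamma)$, so $R(\phi(\Gamma),\Gamma)\neq\emptyset$. Proposition~\ref{prop:Robstruction} then gives $\phi(\Gamma)\prec_\ri\Gamma$; since $\phi(\Gamma)\neq\Gamma$, some component satisfies $\phi(\gamma_i)\prec_\ri\gamma_i$ with $\phi(\gamma_i)\neq\gamma_i$, which exhibits $\phi$ as non-right-veering, hence as $1$-twist left-veering by Remark~\ref{remark:N=1} (alternatively, Theorem~\ref{thm:previous}(1) closes this case at once). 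In both cases $\phi$ is $N$-twist left-veering for some $N$, and Theorem~\ref{theorem:non-Nrv-OT}(1) completes the proof.

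The hard part will be the identification $A=R(\Gamma,\phi(\Gamma))$ (respectively $A=R(\phi(\Gamma),\Gamma)$): Wand's notion of overtwisted region and Definition~\ref{def-of-R} are phrased differently — Wand permits corners at endpoints of the arcs of $\Gamma$ lying on $\partial S$, relaxes the efficiency hypothesis when $N=1$, and encodes disjointness of the region's interior from $\phi(\Gamma)$ as ``every interior intersection is a corner'' — so I would need to check carefully, with attention to the $N=1$ and $P=\emptyset$ degeneracies, that the two $2N$-gons genuinely coincide (or, in the degenerate bigon cases, that the relevant bigon still forces $\phi$ to be non-right-veering). Once this dictionary between the two notions of region is set up, the rest of the argument is a formal application of the cited results.
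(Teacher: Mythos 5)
The paper does not give a proof of this proposition at all: it is stated as Wand's result and attributed to \cite{Wand1}, so Wand's original argument (constructing the overtwisted disk directly from the $2N$-gon) is the intended source. Your proposal takes a genuinely different route, deriving the statement as a formal consequence of the paper's own Theorem~\ref{theorem:non-Nrv-OT} combined with Theorem~\ref{thm:R=A}/Proposition~\ref{prop:R=A}; since the proofs of those three results do not use the proposition you are deriving (Theorem~\ref{theorem:non-Nrv-OT} is proved by an open book foliation construction, Theorem~\ref{thm:R=A} by an elementary interpolating-arc argument, Proposition~\ref{prop:R=A} via the Honda--Kazez--Mati\'c algorithm), there is no circularity and your re-derivation is legitimate. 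What it buys is a self-contained proof internal to this paper's framework, making the implication ``overtwisted region $\Rightarrow$ overtwisted'' a formal corollary of the twist-left-veering machinery rather than an external import; Wand's approach is more direct but lives in his own paper.

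Two accuracy points you should tighten. First, the dichotomy you split on (which arc family $\partial A$ agrees with) is not the only degree of freedom: the $N$ boundary corners of Wand's polygon can lie at the base points $\mathcal B$ or at the other endpoints $\Gamma(1)$, and in the latter situation the polygon is $R(\underline\Gamma,\phi(\underline\Gamma))$ or $R(\phi(\underline\Gamma),\underline\Gamma)$ for the orientation-reversed system $\underline\Gamma$ rather than for $\Gamma$ itself. You do flag ``Wand permits corners at endpoints of the arcs of $\Gamma$ lying on $\partial S$'' as the hard part, and this can indeed be disposed of by replacing $\Gamma$ with $\underline\Gamma$ (the conclusion about $(S,\phi)$ is insensitive to the orientation of $\Gamma$), but the case split needs to be made explicit, since Theorems~\ref{thm:R=A} and~\ref{prop:R=A} and Proposition~\ref{prop:Robstruction} presuppose the base corners are at $\mathcal B$. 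Second, the parenthetical shortcut claiming $\phi^{tw}(\Gamma)\disj\Gamma$ ``in a single step'' is incorrect: $\phi^{tw}(\Gamma)(1)=\Gamma(1)$ as sets, so $\phi^{tw}(\Gamma)\cap\Gamma$ properly contains $\mathcal B$, and the proof of Theorem~\ref{thm:R=A} ($\Rightarrow$) shows the interpolation genuinely requires intermediate arc systems $\Gamma_1,\Gamma_2,\Gamma_3$ (in general picking up a boundary component or a handle of the complementary region $R'_i$); you cannot avoid invoking Theorem~\ref{thm:R=A} with a single application of $\disj$. Your main path through Theorem~\ref{thm:R=A} is fine; just drop or correct the shortcut.
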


\begin{remark}
Wand's proof of the above statement does not immediately generalize to surfaces with $P\neq \emptyset$. His construction of an overtwisted disk $D$ uses $\Gamma\times[0,1]$ as part of $D$, but in general $\Gamma\times[0,1]$ may intersect the transverse link $L$ that corresponds to $P$. 

Moreover, even the definition of an overtwisted region does not immediately extend to $P\neq \emptyset$ case. For example, if $\phi(\gamma) \prec_\ri \gamma$ and they bound a punctured bigon and $F(\phi)=id \in \MCG(S)$ under the forgetful map $F:\MCG(S, P)\to \MCG(S)$ then one can introduce a bigon at the base point $\gamma(0)$ satisfying the conditions (1)-(3). However the contact structure supported by $(S, \phi)$ is tight.

\end{remark}

Now we recall the definition of inconsistency. 

\begin{definition}[Wand]
A class $\phi \in\MCG(S)$ is {\em inconsistent} if there is some arc system $\Gamma$ in $S$ and 
a stabilization $(S',\phi')$ of $(S,\phi)$ such that $(S', \phi', \iota(\Gamma))$ has an overtwisted region, where $\iota: S \to S'$ is the inclusion map (for simplicity $\iota(\Gamma)$ is denoted by $\Gamma$ in the following). Otherwise, $\phi$ is {\em consistent}.  
\end{definition}

Inconsistency is a central concept in Wand's work due to the following result: 
\begin{theorem}\cite[Theorem 1.1]{Wand1}\label{wand'sThm1.1}
\label{theorem:Wand}
$(S,\phi)$ supports an overtwisted contact structure if and only if $\phi$ is inconsistent. 
\end{theorem}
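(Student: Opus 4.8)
The plan is to establish the two implications separately. The direction ``$\Leftarrow$'' (inconsistent $\Rightarrow$ overtwisted) is the quick one. If $\phi$ is inconsistent, then by definition some stabilization $(S',\phi')$ of $(S,\phi)$ admits an arc system $\Gamma$ for which $(S',\phi',\Gamma)$ carries an overtwisted region; the Proposition just recalled (Wand's ``overtwisted region $\Rightarrow$ overtwisted'') then says $(S',\phi')$ supports an overtwisted contact structure, and since a stabilization does not change the supported contact structure up to isotopy (Giroux correspondence), $\xi_{(S,\phi)}=\xi_{(S',\phi')}$ is overtwisted.

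The substantial content is ``$\Rightarrow$'': assuming $\xi_{(S,\phi)}$ overtwisted, one must produce an arc system whose image forms an overtwisted region after \emph{only} stabilizations of $(S,\phi)$ --- no destabilizations --- which is precisely what makes the statement stronger than the Honda--Kazez--Mati\'c criterion. I would attack it through open book foliations, essentially running the construction in the proof of Theorem~\ref{theorem:non-Nrv-OT} in reverse. Since $\xi_{(S,\phi)}$ is overtwisted (here $P=\emptyset$) there is a transverse overtwisted disk $D\subset M_{(S,\phi)}$. First I would put $D$ into a normal form with respect to the open book: using that the graph $G_{-,-}$ of the open book foliation of any transverse overtwisted disk is a tree, and applying finger moves that are realized by positive stabilizations of $(S,\phi)$, arrange that $D\cap S_t$ consists of $N$ b-arcs together with a-arcs, that $D$ has exactly $N$ negative elliptic points, and that all negative hyperbolic points precede all positive hyperbolic points in the page parameter. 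Then the b-arcs of a page just after the last negative hyperbolic point give an $N$-arc system $\Gamma$ in the stabilized surface $S'$, the b-arcs just before the monodromy gluing give $\phi'(\Gamma)$, the negative hyperbolic points trace the left-twist carrying $\phi'(\Gamma)$ to $(\phi')^{tw}(\Gamma)$, and the positive-hyperbolic part of the movie exhibits an interpolating sequence witnessing $(\phi')^{tw}(\Gamma)\ll_\ri\Gamma$.

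Finally I would check that the boundary based region $R(\Gamma,\phi'(\Gamma))$ obtained this way satisfies the extra hypotheses of Theorem~\ref{thm:R=A}: that it is embedded, i.e.\ $\Int(R(\Gamma,\phi'(\Gamma)))\cap\Gamma=\emptyset$, and that $\Int((\phi')^{tw}(\Gamma))\cap\Int(\Gamma)=\emptyset$. These geometric conditions are exactly what separates a mere boundary based region from a genuine overtwisted region, and I expect them to be the main obstacle. The point I would have to make carefully is that any excess intersection of $\Gamma$ (or of the intermediate arc systems in the interpolating sequence) with $\Int(R)$ or with each other can be pushed off across a newly attached one-handle, so it is removable by finitely many \emph{positive} stabilizations of $(S,\phi)$ alone. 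Once this is arranged, Theorem~\ref{thm:R=A} (and Proposition~\ref{prop:R=A} in the $N=1$ case) upgrades $R(\Gamma,\phi'(\Gamma))$ to an overtwisted region in $(S',\phi',\Gamma)$, so $\phi$ is inconsistent. The delicate bookkeeping --- showing that finitely many positive stabilizations simultaneously straighten $D$, make $R$ embedded, and make the interpolating arc systems pairwise disjoint, all without any destabilization --- is where the real work lies.
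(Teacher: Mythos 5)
This statement is Wand's theorem, cited as \cite[Theorem 1.1]{Wand1}; the paper does not supply a proof of it but invokes it as a black box to derive Corollary~\ref{cor:stably-LV}. So there is no in-paper argument to compare against --- you are proposing an independent proof of an external result. Your $\Leftarrow$ direction is fine: an overtwisted region in a stabilization yields an overtwisted $\xi_{(S',\phi')}$, and stabilization preserves the supported contact structure.

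The $\Rightarrow$ direction as written is a plan, not a proof, and the gap you yourself flag is exactly where the entire difficulty of Wand's theorem lives. Two substantive points. First, the normalization you assert --- that after finger moves realized by \emph{positive} stabilizations, a transverse overtwisted disk $D$ can be arranged so that \emph{all} negative hyperbolic points precede \emph{all} positive hyperbolic points in the page parameter --- is not established anywhere in this paper or in \cite{IK1-1, IK1-2, IK2}; those references \emph{construct} disks with such a movie presentation (as in the proof of Theorem~\ref{theorem:non-Nrv-OT}), but constructing one is very different from normalizing an arbitrary $D$ to that form via stabilizations only. Second, even granting the normal form, your step that excess intersections of $\Gamma$, $\phi'(\Gamma)$, and the interpolating arc systems with $\Int(R)$ or with one another ``can be pushed off across a newly attached one-handle'' is precisely the combinatorial accounting that occupies the bulk of Wand's argument: one must check that each such push-off is induced by a stabilization of the open book (not merely an isotopy of $D$ or of the arcs), that it is \emph{positive}, and that it does not introduce new bad intersections or destroy the interpolating sequence for $\ll_\ri$. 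None of this is carried out, and it is not a routine consequence of Theorem~\ref{thm:R=A} (which only tells you what must be true of $R(\Gamma,\phi'(\Gamma))$ once you have it, not how to achieve it). If you can supply those two pieces, you would in fact obtain a new open-book-foliation proof of both Wand's theorem and Corollary~\ref{cor:stably-LV}; as it stands, the hard implication is not proved.
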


We show that existence of overtwisted region can be understood as a special case of $N$-twist left-veering.

\begin{theorem}\label{thm:R=A}
Let $\Gamma$ be an $N$-arc system with $N\geq 2$ such that the boundary based region $R(\Gamma,\phi(\Gamma))$ exists.  
Then  $R(\Gamma,\phi(\Gamma))$ is an overtwisted region if and only if
$\phi^{tw}(\Gamma) \ll_\ri \Gamma$, $\Int(\phi^{tw}(\Gamma))\cap\Int(\Gamma)=\emptyset$, and $R(\Gamma,\phi(\Gamma))$ is embedded. 
\end{theorem}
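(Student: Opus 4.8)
\textbf{Proof proposal for Theorem~\ref{thm:R=A}.}

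The plan is to verify the two implications directly by unpacking the two definitions and matching them condition by condition. Recall that by Definition~\ref{def-of-R} a boundary based region $R=R(\Gamma,\phi(\Gamma))$ is automatically a $2N$-gon whose corners alternate between $\mathcal B\subset\partial S$ (the base corners) and points of $(\Gamma\cap\phi(\Gamma))\setminus\mathcal B$ in the interior, with $\partial R\subset\Gamma\cup\phi(\Gamma)$, and whose boundary orientation agrees with $\phi(\Gamma)$ and disagrees with $\Gamma$. Uniqueness of $R$ for a given $\Gamma$ is already recorded among the remarks after Definition~\ref{def-of-R}. So $R$ already satisfies orientation condition (1) and the alternation part of condition (2) of Wand's Definition~\ref{def:OT-region}, as well as the uniqueness condition (3). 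Thus the only gap between ``$R$ is a boundary based region'' and ``$R$ is an overtwisted region'' is: (a) $R$ must be embedded (a hypothesis built into Definition~\ref{def:OT-region} but only optional in Definition~\ref{def-of-R}); and (b) the full strength of condition (2), namely that \emph{every} point of $\Gamma\cap\phi(\Gamma)\cap\Int(S)$ is a corner of $A$ — equivalently, that there are no interior intersection points of $\Gamma$ and $\phi(\Gamma)$ other than the $N$ non-base corners $q_1,\dots,q_N$.

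First I would prove the forward direction ($R$ is an overtwisted region $\Rightarrow$ the three stated conditions). Embeddedness of $R$ is immediate from Definition~\ref{def:OT-region}. For $\Int(\phi^{tw}(\Gamma))\cap\Int(\Gamma)=\emptyset$: by the description of the left twist in Definition~\ref{def:left-twist}, $\phi^{tw}(\gamma_i)$ runs along $\gamma_i$ up to $q_{i-1}$ and then switches to $\phi(\gamma_{i-1})$; the only interior crossings it could have with $\Gamma$ come either from crossings of $\phi(\Gamma)$ with $\Gamma$ away from the $q_j$'s, or from crossings among the arcs of $\phi^{tw}(\Gamma)$ themselves forced by such extra crossings. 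Wand's condition (2) says there are no interior $\Gamma\cap\phi(\Gamma)$ crossings other than the $q_j$, and since $R$ is embedded the twisted arcs can be pushed to the inside of $R$ near their base points without creating new crossings; I would make this precise by an explicit isotopy supported in a neighborhood of $\partial R$. That $R$ being a boundary based region then gives $\phi^{tw}(\Gamma)\prec_\ri\Gamma$ with $\phi^{tw}(\Gamma)\cap\Gamma=\mathcal B$ (using the just-established disjointness of interiors), i.e. $\phi^{tw}(\Gamma)\disj\Gamma$, which in particular yields $\phi^{tw}(\Gamma)\ll_\ri\Gamma$.

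For the converse, suppose $\phi^{tw}(\Gamma)\ll_\ri\Gamma$, $\Int(\phi^{tw}(\Gamma))\cap\Int(\Gamma)=\emptyset$, and $R$ is embedded. Since $R$ is embedded and is a boundary based region, conditions (1), (3) and the alternation half of (2) of Definition~\ref{def:OT-region} hold, as noted above; it remains only to rule out extra interior intersection points of $\Gamma$ with $\phi(\Gamma)$. Here is where I would use the hypothesis $\Int(\phi^{tw}(\Gamma))\cap\Int(\Gamma)=\emptyset$ together with embeddedness: an extra crossing $x\in\Gamma\cap\phi(\Gamma)\cap\Int(S)$ not among the $q_j$ would, via the left-twist construction, survive as an interior crossing of $\phi^{tw}(\Gamma)$ with $\Gamma$ (the twist only reroutes the arcs past the corners $q_j$, not past $x$), contradicting the disjointness hypothesis — unless $x$ is removable by an isotopy, i.e. $\Gamma$ and $\phi(\Gamma)$ were not intersecting efficiently, which is excluded by the standing efficiency convention in Definition~\ref{def-of-R}. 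Hence no such $x$ exists and $R$ satisfies all conditions of Definition~\ref{def:OT-region}.

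The main obstacle I anticipate is the careful bookkeeping in the step that converts ``extra interior crossing of $\Gamma$ and $\phi(\Gamma)$'' into ``interior crossing of $\phi^{tw}(\Gamma)$ and $\Gamma$'' (and back), because the left twist is a cut-and-paste operation near the corners and one must make sure no crossings are spuriously created or destroyed by the reroute or by the subsequent tightening to efficient position. This is essentially a local planar argument in a neighborhood of $R\cup\Gamma\cup\phi(\Gamma)$, and the cleanest route is probably to fix an explicit model of the twist in a bicollar of $\partial R$ and track each strand; the $N=1$ case (Proposition~\ref{prop:R=A}) should be handled separately since there the bigon is allowed to be inefficient, which is exactly why it is stated apart.
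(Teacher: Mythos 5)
Your proposal has two genuine gaps, one in each direction, and both stem from underestimating the content of Wand's uniqueness condition~(3) and the content of the relation $\ll_\ri$.

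In the forward direction you assert that the just-established disjointness of interiors gives $\phi^{tw}(\Gamma)\cap\Gamma=\mathcal B$, hence $\phi^{tw}(\Gamma)\disj\Gamma$, hence $\phi^{tw}(\Gamma)\ll_\ri\Gamma$. This is false: since $\phi$ fixes $\partial S$ pointwise, $\phi^{tw}(\gamma_i)(1)=\phi(\gamma_{i-1})(1)=\gamma_{i-1}(1)$, so $\phi^{tw}(\Gamma)(1)=\Gamma(1)$ as sets, and therefore $\phi^{tw}(\Gamma)\cap\Gamma\supseteq \mathcal B\cup\Gamma(1)\neq\mathcal B$. Thus $\phi^{tw}(\Gamma)\not\disj\Gamma$ by Definition~\ref{def:SO}. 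To get around the shared endpoints one must perturb them to obtain a new arc system $\Gamma_1$; but once that is done, one must still produce an interpolating chain from $\Gamma_1$ to $\Gamma$, and this is exactly where the nontrivial work lies. The paper constructs $\Gamma_2,\Gamma_3$ sitting between $\Gamma_1$ and $\Gamma$ by analyzing each component $R'_i$ of $S\setminus(\Gamma\cup\Gamma_1)$, and it is precisely the uniqueness hypothesis~(3) that guarantees $R'_i$ is not a polygonal region, so that either a boundary component of $S$ or a handle is available to build the interpolating arcs. Your proof never engages with this and so never actually establishes $\phi^{tw}(\Gamma)\ll_\ri\Gamma$.

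In the reverse direction you claim that condition~(3) is ``already recorded'' among the remarks after Definition~\ref{def-of-R}. That remark asserts uniqueness of the boundary based region $R(\Gamma,\Gamma')$ among $2N$-gons satisfying conditions (i)--(iii), in particular the fixed orientation convention~(ii). Wand's condition~(3) in Definition~\ref{def:OT-region}, by contrast, is uniqueness among disks satisfying his condition~(1), which permits \emph{either} orientation of $\partial A$ relative to $\Gamma$ and $\phi(\Gamma)$. So uniqueness of the boundary based region does not by itself rule out a competing region with the reversed orientation convention (i.e.\ a nonempty $R(\underline\Gamma,\phi(\underline\Gamma))$). This is exactly what the hypothesis $\phi^{tw}(\Gamma)\ll_\ri\Gamma$ is for: if such a reversed region existed, then $R(\phi^{tw}(\Gamma),\Gamma)\neq\emptyset$, and Proposition~\ref{prop:Robstruction} would then give $\phi^{tw}(\Gamma)\not\ll_\ri\Gamma$, a contradiction. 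Your converse argument never invokes $\phi^{tw}(\Gamma)\ll_\ri\Gamma$ at all, which is a telltale sign that something is missing; without it, condition~(3) cannot be obtained. Your treatment of condition~(2) via the hypotheses $\Int(\phi^{tw}(\Gamma))\cap\Int(\Gamma)=\emptyset$ and embeddedness is essentially correct and matches the paper's decomposition of $\Gamma\cap\phi(\Gamma)\cap\Int(S)$, but conditions~(2) and~(3) require different hypotheses and your proof conflates their sources.
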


\begin{proof}
($\Leftarrow$) 
Assume that $\phi^{tw}(\Gamma) \ll_\ri \Gamma$, $\Int(\phi^{tw}(\Gamma))\cap\Int(\Gamma)=\emptyset$, and $\Int(R(\Gamma,\phi(\Gamma)) \cap \Gamma=\emptyset$.

Condition (1) of Definition  \ref{def:OT-region} is clearly satisfied. 

Since $\Int(R(\Gamma,\phi(\Gamma)) \cap \Gamma=\emptyset$ we  have 
$\partial R(\Gamma,\phi(\Gamma)) \cap \Int(\phi(\Gamma)) \cap \Int(\Gamma) =\{q_1,\ldots,q_N\}$.
Then 
\begin{eqnarray*}
\Gamma \cap \phi(\Gamma) \cap \Int(S) &=& [\Int(\phi^{tw}(\Gamma))\cap \Int(\Gamma)]  \cup \left[\partial R(\Gamma,\phi(\Gamma)) \cap \Int(\phi(\Gamma)) \cap \Int(\Gamma)\right]\\
&=&
\{q_1,\ldots,q_N\}
\end{eqnarray*}
i.e., Condition (2) follows.

Assume to the contrary that Condition (3) does not hold.
By (2), this happens only if $R(\underline{\Gamma},\phi(\underline{\Gamma}))\neq \emptyset$ where $\underline{\Gamma}$ denotes the $\Gamma$ with reversed orientation (see Figure \ref{fig:WandOTregion} (a)).
\begin{figure}[htbp]
\includegraphics*[width=80mm, bb=177 602 427 716]{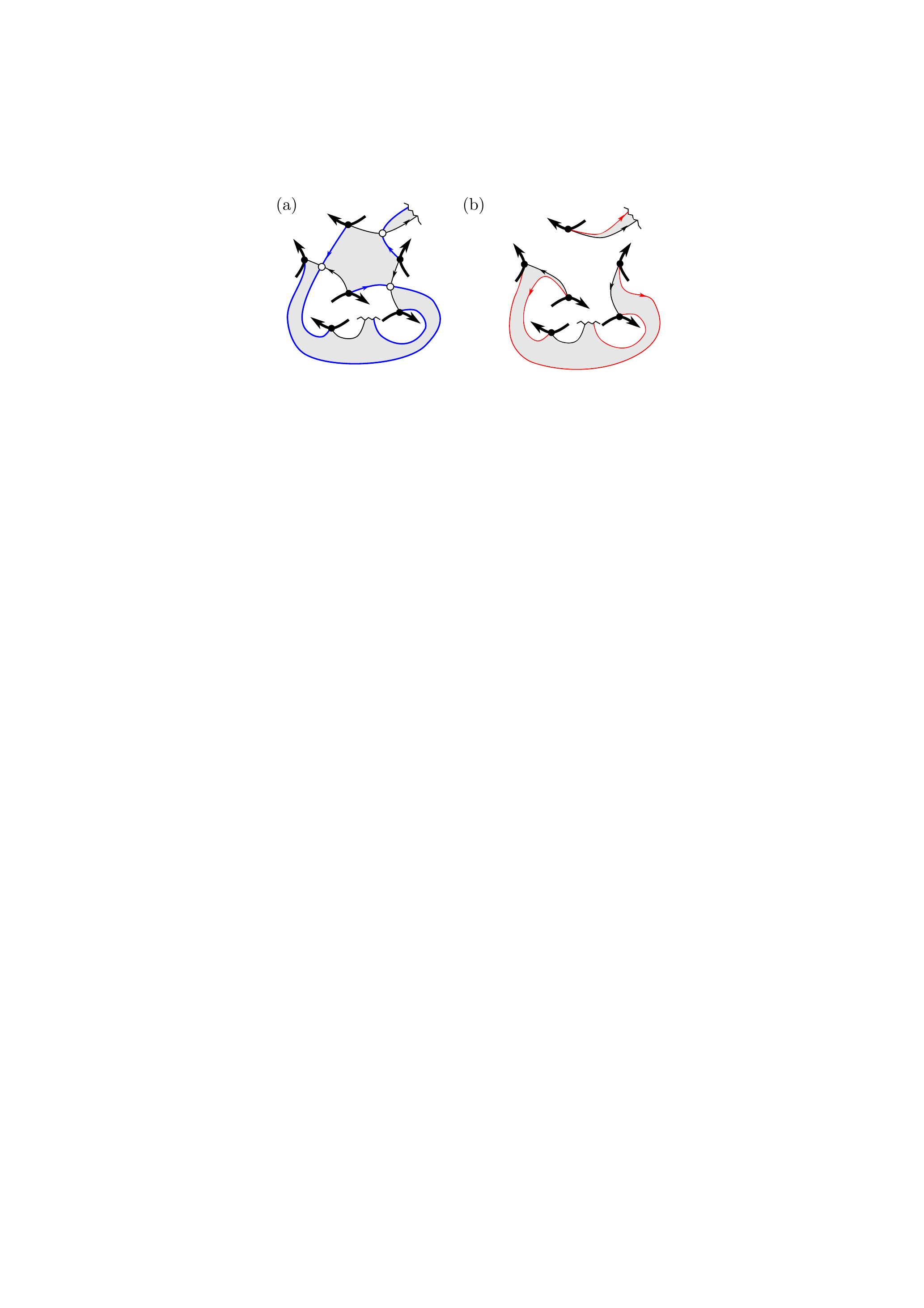}
\caption{(a) Regions that satisfy Properties (1) and (2) but (3). 
(b) The boundary based region $R(\phi^{tw}(\Gamma), \Gamma)$.}
\label{fig:WandOTregion}
\end{figure} 
We note that $R(\phi^{tw}(\Gamma),\Gamma)$ is non-empty as it contains $R(\underline{\Gamma},\phi(\underline{\Gamma}))$, see Figure \ref{fig:WandOTregion} (b).
By Proposition \ref{prop:Robstruction} this implies $\phi^{tw}(\Gamma) \not\ll_{\sf right}\Gamma$ which contradicts the assumption.

($\Rightarrow$) 
To prove the only if direction, assume that $R(\Gamma,\phi(\Gamma))$ satisfies (1)--(3) of Definition \ref{def:OT-region}.
Then clearly $\Int(\Gamma) \cap \Int(\phi^{tw}(\Gamma))=\emptyset$ and $\Int(R(\Gamma,\phi(\Gamma)) \cap \Gamma=\emptyset$ are satisfied. It is left to show $\phi^{tw}(\Gamma) \ll_{\ri} \Gamma$.

Let $\Gamma=(\gamma_1,\ldots,\gamma_N)$. Define $\Gamma_1=(\gamma^1_1,\ldots,\gamma^1_N)$ to be an arc system obtained from $\phi^{tw}(\Gamma)$ by shifting the endpoints $\phi^{tw}(\Gamma)(1)$ slightly to the left (with respect to the orientation of the boundary $\partial S$). Then $\phi^{tw}(\Gamma) \disj \Gamma_1 \prec_{\ri} \Gamma$. 
For $i=1,\ldots,N$ let $R'_i$ be the connected component of $S \setminus (\Gamma \cup \Gamma_1)$ which lies between $\gamma^1_i$ and $\gamma_i$. We note that $R'_i$ cannot be a
boundary based region because otherwise Condition (3) would be violated. 

If $\partial R'_i$ contains a boundary component of $S$, let $\gamma_i^2$ be a properly embedded arc in $R'_{i}$ connecting $v_i$ and the boundary component of $S$ (see the left sketch in Figure \ref{fig:WandOTregion2}). We have $\gamma_i^1\disj \gamma_i^2 \disj \gamma_i$. Let $\gamma_i^3$ be an arc obtained from $\gamma_i^2$ whose endpoint $\gamma_i^2(1)$ is slightly moved so that $\gamma_i^1\disj \gamma_i^2 \disj \gamma_i^3 \disj \gamma_i$.

If $\partial R'_i$ does not contain any boundary components of $S$ then $R'_i$ has genus $\geq 1$. We construct arcs $\gamma_i^2$ and $\gamma_i^3$ from simple closed curves in $R'_i$ based at $v_i$ that go around a handle (see the right sketch in Figure \ref{fig:WandOTregion2}) and whose endpoints $\gamma_i^2(1)$ and $\gamma_i^3(1)$ are slightly moved to realize $\gamma_i^1\prec^{\sf disj} \gamma_i^2 \prec^{\sf disj} \gamma_i^3 \prec^{\sf disj} \gamma_i$.

\begin{figure}[htbp]
\includegraphics*[width=90mm, bb=184 583 440 713]{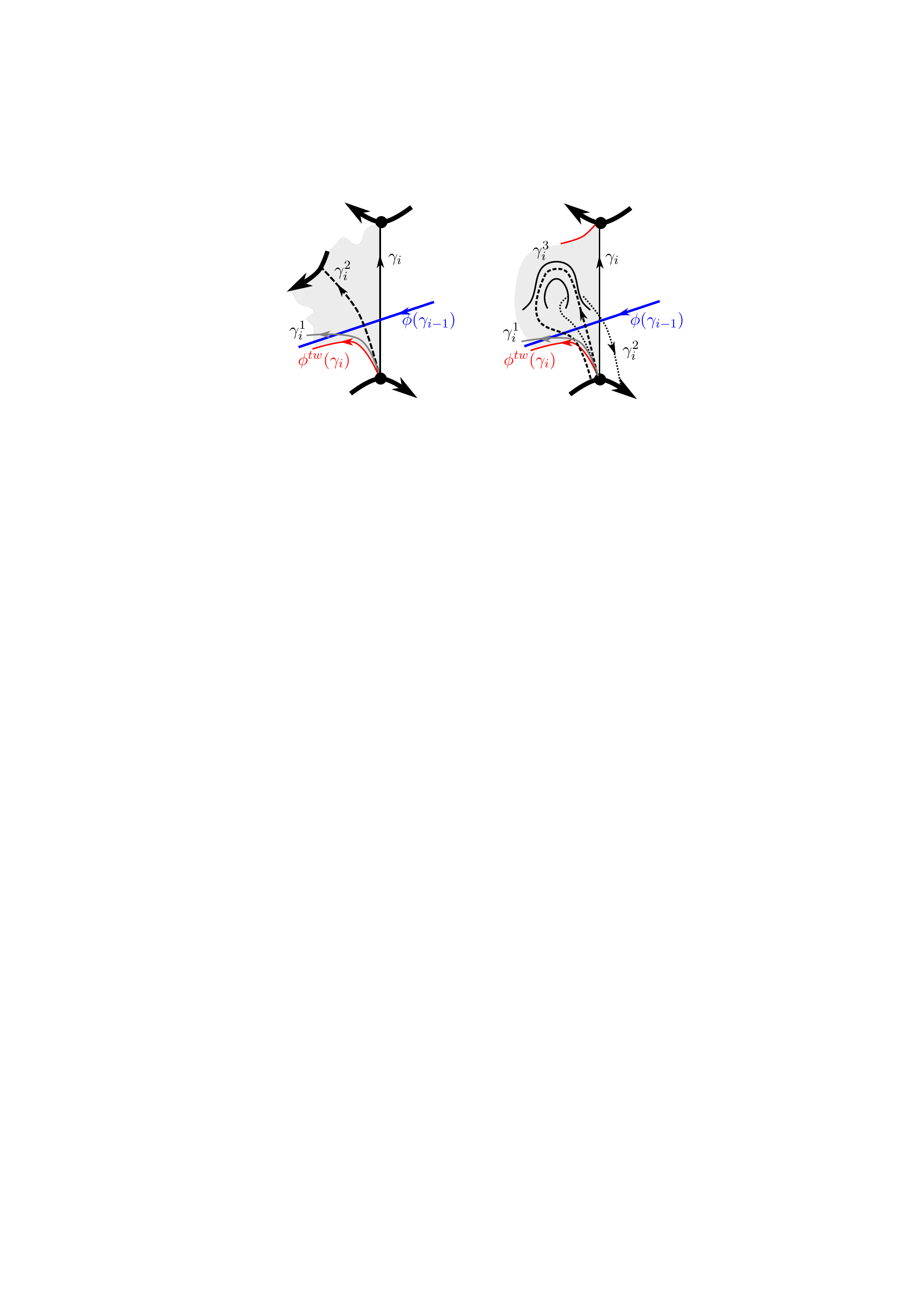}
\caption{Construction of arc systems $\phi^{tw}(\Gamma) \prec^{\sf disj}\Gamma_1  \prec^{\sf disj}\Gamma_2  \disj \Gamma_3 \prec^{\sf disj} \Gamma$.}
\label{fig:WandOTregion2}
\end{figure} 

We take such arcs so that $\Gamma_2=(\gamma^2_1,\ldots,\gamma_N^2)$ and $\Gamma_3 = (\gamma^3_1,\ldots,\gamma_N^3)$ are $N$-arc systems with 
\[ \phi^{tw}(\Gamma) \prec^{\sf disj}\Gamma_1  \prec^{\sf disj}\Gamma_2  \disj \Gamma_3 \prec^{\sf disj} \Gamma,\]
hence, $\phi^{tw}(\Gamma) \ll_{\ri} \Gamma$.
\end{proof}

When $N=1$ a parallel statement to Theorem~\ref{thm:R=A} holds: 

\begin{proposition}\label{prop:R=A}
Let $\gamma$ be an arc (i.e., a 1-arc system) in $S$.  
The augmented open book  $(S, \phi, \gamma)$ has an overtwisted region if and only if $\phi^{tw}(\gamma) \ll_\ri \gamma$ and that $\Int(\phi^{tw}(\gamma)) \cap \Int(\gamma) =\emptyset$. 
\end{proposition}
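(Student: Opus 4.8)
The plan is to run the argument of Theorem~\ref{thm:R=A} at $N=1$, exploiting two features special to a single arc and dealing with the one genuinely new ingredient, the relaxation of efficiency allowed in Definition~\ref{def:OT-region} when $N=1$. First I would split on whether $R(\gamma,\phi(\gamma))=\emptyset$. If $R(\gamma,\phi(\gamma))=\emptyset$ then $\phi^{tw}(\gamma)=\phi(\gamma)$, and since for a single arc $\alpha\prec_\ri\beta$ together with $\Int(\alpha)\cap\Int(\beta)=\emptyset$ already implies $\alpha\ll_\ri\beta$ (if the region between $\alpha$ and $\beta$ were a disk we would have $\alpha=\beta$, so it carries a handle or meets $\partial S$ and we interpolate exactly as in the ``$\Rightarrow$'' part of the proof of Theorem~\ref{thm:R=A}), the two hypotheses of the Proposition collapse to ``$\phi(\gamma)\prec_\ri\gamma$ and $\Int(\phi(\gamma))\cap\Int(\gamma)=\emptyset$''. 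If $R(\gamma,\phi(\gamma))\neq\emptyset$ then $\gamma$ and $\phi(\gamma)$ intersect efficiently and $R(\gamma,\phi(\gamma))$ is the bigon of Figure~\ref{fig:N1case}.

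Consider first the case $R(\gamma,\phi(\gamma))\neq\emptyset$. I would begin by showing that $R(\gamma,\phi(\gamma))$ is automatically embedded, i.e.\ $\Int(R(\gamma,\phi(\gamma)))\cap\gamma=\emptyset$: the bigon $R(\gamma,\phi(\gamma))$ has one side a sub-arc of $\gamma$ and the other a sub-arc of $\phi(\gamma)$, so the remaining part of $\gamma$, being embedded, could enter the interior only across the $\phi(\gamma)$-side, and it would then have to leave across that \emph{same} side, producing a bigon between $\gamma$ and $\phi(\gamma)$ and contradicting efficiency. This is exactly where $N=1$ is used: for $N\geq 2$ an arc of $\Gamma$ may traverse $\Int(R)$ by entering and leaving through two \emph{distinct} $\phi(\Gamma)$-sides, which is why the embeddedness hypothesis cannot be dropped in Theorem~\ref{thm:R=A}. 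Granting automatic embeddedness, an overtwisted region of $(S,\phi,\gamma)$ is forced (by an innermost-bigon argument together with Definition~\ref{def:OT-region}(2)) to coincide with $R(\gamma,\phi(\gamma))$ itself; both implications of Theorem~\ref{thm:R=A} then go through verbatim with $N=1$, the only change being that the embeddedness hypothesis is now automatic and that Definition~\ref{def:OT-region}(2) involves a single non-base corner. In particular the uniqueness clause~(3) is handled as there: a second qualifying bigon forces $R(\phi^{tw}(\gamma),\gamma)\neq\emptyset$, contradicting $\phi^{tw}(\gamma)\ll_\ri\gamma$ via Proposition~\ref{prop:Robstruction}.

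It remains to treat the case $R(\gamma,\phi(\gamma))=\emptyset$, where by the reduction above the statement becomes: $(S,\phi,\gamma)$ has an overtwisted region if and only if $\phi(\gamma)\prec_\ri\gamma$ and $\Int(\phi(\gamma))\cap\Int(\gamma)=\emptyset$. For the ``if'' direction, under these hypotheses $\phi(\gamma)$ lies strictly to the left of $\gamma$ near $v$, so replacing $\phi(\gamma)$ by an inefficient representative that curls back to meet $\gamma$ once at a point $q$ near $v$ yields a small embedded bigon $A$ with sides along $\gamma$ and along $\phi(\gamma)$; one checks directly that $\partial A$ agrees with $\gamma$ and disagrees with $\phi(\gamma)$ (Definition~\ref{def:OT-region}(1)), that $q$ is the unique interior intersection so the corners $v,q$ alternate correctly (2), and that $A$ is the unique such bigon because the complementary region between $\gamma$ and $\phi(\gamma)$ is a disk only if $\phi(\gamma)=\gamma$ (3). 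For the ``only if'' direction, an overtwisted region in this case must be such an inefficient bigon; its orientation forces $\phi(\gamma)\prec_\ri\gamma$, and condition~(2) (the unique interior crossing must be the only one) forces $\Int(\phi(\gamma))\cap\Int(\gamma)=\emptyset$ once the arcs are put in efficient position.

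The main obstacle I anticipate is the bookkeeping around Wand's relaxation of efficiency at $N=1$: one must carefully identify his relaxed bigon with our (efficient) boundary based region, verify that the uniqueness clause~(3) survives the inefficient representative used in the last case, and fix a convention for the base corner — since Wand's $N=1$ definition permits the base corner of the bigon to sit at either endpoint of $\gamma$, the statement should be read as referring to overtwisted regions based at $v=\gamma(0)$ (the $\gamma(1)$-based ones being governed by the same statement applied to $\underline\gamma$). Once these points are pinned down, the proof is a transcription of that of Theorem~\ref{thm:R=A} to $N=1$ together with the elementary construction in the previous paragraph.
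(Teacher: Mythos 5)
Your handling of the case $R(\gamma,\phi(\gamma))=\emptyset$ is essentially the paper's proof: use $\phi^{tw}(\gamma)=\phi(\gamma)$, observe that the two hypotheses then reduce to ``$\phi(\gamma)\prec_\ri\gamma$ and $\Int(\phi(\gamma))\cap\Int(\gamma)=\emptyset$'', and in one direction isotope $\phi(\gamma)$ slightly to create the inefficient bigon at the base point, while in the other remove the bigon and note that the region between $\phi(\gamma)$ and $\gamma$ cannot be a disk (else $\phi(\gamma)=\gamma$, contradicting uniqueness) so the Honda--Kazez--Mati\'c interpolation gives $\phi(\gamma)\ll_\ri\gamma$. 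This matches the paper's argument in both substance and technique.

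However, your opening case split is a red herring, and the entire second paragraph is vacuous. Wand's overtwisted region (Definition~\ref{def:OT-region}) is defined only for $\phi\in\MCG(S)$ and $\Gamma\in\mathcal A_{\mathcal B}(S)$, i.e.\ $P=\emptyset$; Proposition~\ref{prop:R=A} therefore lives in the unpunctured setting. For $N=1$ and $P=\emptyset$, a boundary based region is a bigon cut off by efficiently intersecting arcs, which is impossible (any such bigon is removable by isotopy), so $R(\gamma,\phi(\gamma))=\emptyset$ automatically and $\phi^{tw}(\gamma)=\phi(\gamma)$ always. The paper makes this explicit at the very start of its proof. Your discussion of ``the case $R(\gamma,\phi(\gamma))\neq\emptyset$,'' including the appeal to Figure~\ref{fig:N1case} (a punctured bigon) and the automatic-embeddedness argument, presupposes $P\neq\emptyset$, where overtwisted regions are not even defined; it should be deleted. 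Once that is dropped, what remains is a correct transcription of the paper's own argument.
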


\begin{proof}
Note that $\phi(\gamma)=\phi^{tw}(\gamma)$ since $\gamma$ is a 1-arc system. 

($\Rightarrow$) 
Suppose that $(S, \phi, \gamma)$ has a bigon overtwisted region. 
Reversing the orientation of $\gamma$ if necessary, we may further assume that the bigon is at the base point  $\gamma(0)$. 
By the uniqueness property (3) of Definition~\ref{def:OT-region} we know that $\phi\neq id$. 
After removing the bigon formed by $\gamma$ and $\phi(\gamma)$ we see that $\phi(\gamma)\prec_\ri \gamma$ and $\phi(\gamma)\cap \gamma = \partial \gamma$; that means $\gamma$ and $\phi(\gamma)$ are almost disjoint. 
Since $\phi\neq id$ the component of $S\setminus (\phi(\gamma) \cup \gamma)$ that lies between $\phi(\gamma)$ and $\gamma$ at the base point $\gamma(0)$ is not a disk. 
Applying Honda, Kazez and Mati\'c's algorithm \cite{HKM} we can find a sequence of arcs with 
$$\phi(\gamma) \disj \gamma_1 \disj\dots\disj \gamma_l \disj \gamma,$$ thus $\phi^{tw}(\gamma)=\phi(\gamma) \ll_\ri \gamma$. 

($\Leftarrow$) 
Assume that $\phi^{tw}(\gamma) \ll_{\ri} \gamma$ and that $\Int(\phi^{tw}(\gamma)) \cap \Int(\gamma)=\emptyset$. Since $\phi^{tw}(\gamma)=
\phi(\gamma)$ we have $\Int(\phi(\gamma)) \cap \Int(\gamma)=\emptyset$ so by isotopy, $\phi(\gamma)$ and $\gamma$ form a bigon at the base point which is an overtwisted region for $(S, \phi, \gamma)$. 
\end{proof} 

By Theorems~\ref{theorem:non-Nrv-OT}, \ref{wand'sThm1.1} and \ref{thm:R=A}, and Proposition~\ref{prop:R=A} we obtain the following corollary.

\begin{corollary}
\label{cor:stably-LV}
An open book $(S,\phi)$ supports an overtwisted contact structure if and only if a stabilization of $(S,\phi)$ is $N$-twist left-veering for some $N$.
\end{corollary}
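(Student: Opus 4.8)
The plan is to deduce Corollary~\ref{cor:stably-LV} by combining the three results already at our disposal: Wand's inconsistency criterion (Theorem~\ref{theorem:Wand}), the equivalence between overtwisted regions and a special form of $N$-twist left-veering (Theorem~\ref{thm:R=A} together with Proposition~\ref{prop:R=A}), and the overtwistedness half of Theorem~\ref{theorem:non-Nrv-OT}(1). One direction is essentially immediate, and the other direction is where Wand's theorem and the translation lemma do the work.

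For the ``if'' direction, suppose some stabilization $(S',\phi')$ of $(S,\phi)$ is $N$-twist left-veering for some $N$. By Theorem~\ref{theorem:non-Nrv-OT}(1) (applied with $P=\emptyset$ to the open book $(S',\phi')$), the contact manifold $(M_{(S',\phi')},\xi_{(S',\phi')})$ is overtwisted. Since stabilization does not change the supported contact structure, $\xi_{(S',\phi')}\simeq\xi_{(S,\phi)}$, so $(S,\phi)$ supports an overtwisted contact structure. This direction does not even need Wand's theorem.

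For the ``only if'' direction, suppose $(S,\phi)$ supports an overtwisted contact structure. By Wand's inconsistency criterion (Theorem~\ref{theorem:Wand}), $\phi$ is inconsistent; that is, there is an arc system $\Gamma$ in $S$ and a stabilization $(S',\phi')$ of $(S,\phi)$ such that the augmented open book $(S',\phi',\Gamma)$ has an overtwisted region $A$. If $\Gamma$ is an $N$-arc system with $N\geq 2$, then by Theorem~\ref{thm:R=A} the overtwisted region $A$ equals the boundary based region $R(\Gamma,\phi'(\Gamma))$ and satisfies $\phi'^{tw}(\Gamma)\ll_\ri \Gamma$; in particular $R(\Gamma,\phi'(\Gamma))$ has type $(N,0)$ since $P=\emptyset$, so $\phi'$ is $(N,0)$-twist left-veering, hence $N$-twist left-veering. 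If $N=1$, Proposition~\ref{prop:R=A} gives $\phi'^{tw}(\gamma)=\phi'(\gamma)\ll_\ri\gamma$, so $\phi'$ is $1$-twist left-veering (equivalently, non-right-veering, by Remark~\ref{remark:N=1}). Either way, $(S',\phi')$ is $N$-twist left-veering for some $N$, which is what we wanted.

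The only genuine subtlety — and the step I would be most careful about — is matching conventions between Wand's ``overtwisted region'' and our ``boundary based region with $\phi^{tw}(\Gamma)\ll_\ri\Gamma$.'' Theorem~\ref{thm:R=A} is stated under the hypothesis that $R(\Gamma,\phi(\Gamma))$ \emph{exists}, whereas Wand's definition allows the orientation of $\partial A$ to agree with either $\Gamma$ or $\phi(\Gamma)$; one must observe that after possibly reversing the orientation of (every arc of) $\Gamma$, Wand's region becomes exactly a boundary based region in the sense of Definition~\ref{def-of-R}, so the hypothesis of Theorem~\ref{thm:R=A} is met. This orientation-reversal bookkeeping is routine but is the one place where the argument could superficially look like a gap, so I would spell it out explicitly. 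Everything else is a direct citation of the stated results.
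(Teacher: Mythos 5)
Your proof is correct and takes exactly the same route as the paper, which simply cites Theorems~\ref{theorem:non-Nrv-OT}, \ref{theorem:Wand}, \ref{thm:R=A} and Proposition~\ref{prop:R=A} and leaves the assembly to the reader. The orientation subtlety you flag is real: Wand's Definition~\ref{def:OT-region}(1) permits the orientation of $\partial A$ to agree with $\Gamma$ rather than $\phi(\Gamma)$, while Definition~\ref{def-of-R}(ii) fixes one convention, so one must reverse the orientations of all arcs of $\Gamma$ (thereby swapping $\mathcal B$ with $\Gamma(1)$) to land in the hypothesis of Theorem~\ref{thm:R=A} --- this is the same move the paper makes explicit in the $N=1$ case inside the proof of Proposition~\ref{prop:R=A} and implicitly assumes for $N\geq 2$, so spelling it out as you do is a sound and welcome addition.
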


\section{Variation of twist-left-veering and virtual looseness}\label{section7}

One motivation to introduce the ordering $\ll_{\ri}$ comes from an observation that non-right-veering closed braids are not necessarily loose. 
On the other hand, in \cite[Corollary 5.7]{IK-branch} we showed that non-right-veering closed braids are \emph{virtually loose}; that is, some finite cover of its complement is overtwisted. 
In this section we will generalize the result in \cite{IK-branch} from arcs to $N$-arc systems. 
We do this by introducing a variation $\ll_\ri^{\partial+P}$ of the ordering $\ll_\ri$.

We begin with reviewing the standard branched cyclic coverings (studied in \cite{IK-branch}) and then discuss how twist-left-veering can be related to virtual looseness.

Let $d>1$. 
For an $m$-component transverse link $\mathcal{T}=\mathcal{T}_1\cup \cdots \cup \mathcal{T}_m$ in a contact 3-manifold $(M,\xi)$.  
Let
\[ e_d = p_d \circ H: \pi_{1}(M\setminus \mathcal{T}) \rightarrow \Z \slash d\Z \]
be a homomorphism obtained by composing the Hurewicz homomorphism 
$$H:\pi_{1}(M\setminus \mathcal{T}) \rightarrow H_1(M\setminus \mathcal{T};\Z)$$ 
and 
$$p_d: H_1(M\setminus \mathcal{T};\Z) \cong H_{1}(M;\Z) \oplus \bigoplus_{i=1}^{m} \Z[\mu_i] \rightarrow \Z \slash d\Z$$ 
that is defined by $p_d(x)=0$ for $x \in H_1(M;\Z)$ and $p_d([\mu_i])=1$ for all $i=1,\dots,m$, where $[\mu_{i}] \in  H_1(M\setminus \mathcal{T};\Z)$ be the homology class represented by a meridian $\mu_i$ of the $i$-th component $\mathcal{T}_i$.
The \emph{standard} $d$-fold cyclic branched covering of $\mathcal{T}$ is a contact branched covering \cite{Geiges, ON}. 
$$\pi:(\widetilde{M},\widetilde{\xi})\to (M, \xi)$$ such that the restriction $\pi: \widetilde{M} \setminus \widetilde{\mathcal{T}} \rightarrow M\setminus \mathcal{T}$ is a usual covering that corresponds to $\textrm{Ker}\,e_d$. 

Assume that $(M,\xi)$ is supported by an open book $(S,\phi)$ and $\mathcal{T}$ is represented by a closed $n$-braid $L$ in $(S,\phi)$ with the distinguished monodromy  $\phi_L \in \MCG(S, P)$ where $P$ is a set of $n$ interior points of $S$. 
Let 
\[ e'_d=p'_d \circ H': \pi_{1}(S \setminus P) \rightarrow \Z \slash d\Z \]
be a homomorphism defined by composing the Hurewicz homomorphism $$H':\pi_{1}(S\setminus P) \rightarrow H_1(S\setminus P;\Z)$$ and $$p'_d: H_1(S\setminus P;\Z)\cong H_{1}(S;\Z) \oplus \bigoplus_{i=1}^{n} \Z[c_i] \rightarrow \Z \slash d\Z$$ defined by $p'_d(x)=0$ for $x \in H_1(S;\Z)$ and $p'_d([c_i])=1$ for $i=1,\ldots, n$ where $[c_i]$ is the homology class represented by a loop around $p_i \in P$.

Let $\pi_{S}=\pi^{(d)}_{S}: (\widetilde{S}, \widetilde P) \rightarrow (S, P)$ be the $d$-fold cyclic branched covering that corresponds to $\textrm{Ker}\,e'_d$. We call it the \emph{standard} $d$-fold cyclic branched covering of $(S,P)$.
The distinguished monodromy $\phi_L$ always lifts to $\widetilde{\phi_L}:(\widetilde{S}, \widetilde P) \rightarrow (\widetilde{S}, \widetilde P)$ which we call the \emph{standard} lift.

\begin{lemma}
\label{lemma:virtual-loose}
In the above setting, 
if the standard lift $\widetilde{\phi_L}:(\widetilde{S}, \widetilde P) \rightarrow (\widetilde{S}, \widetilde P)$ is $(N, 0)$-twist-left-veering 
then $\mathcal T$ is virtually loose. 
\end{lemma}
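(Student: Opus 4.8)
The plan is to lift everything to the standard $d$-fold cyclic branched cover and read off overtwistedness of the complement upstairs directly from Theorem~\ref{theorem:non-Nrv-OT}(2); virtual looseness of $\mathcal{T}$ is then immediate, because that complement is a finite cover of $M\setminus\mathcal{T}$ carrying the pulled-back contact structure.

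First I would recall, from \cite{IK-branch} and the contact branched covering constructions of \cite{Geiges, ON}, the structure of the situation upstairs. The standard $d$-fold cyclic branched covering $\pi_S:(\widetilde{S},\widetilde{P})\to(S,P)$ of the page carries a lift $\widetilde{\phi}\in\MCG(\widetilde{S})$ of $\phi$, and the open book $(\widetilde{S},\widetilde{\phi})$ supports the standard contact branched covering $\pi:(\widetilde{M},\widetilde{\xi})\to(M,\xi)$ of $\mathcal{T}$. Under this identification the branch locus $\widetilde{\mathcal{T}}=\pi^{-1}(\mathcal{T})$ is realized as a closed braid $\widetilde{L}$ with respect to $(\widetilde{S},\widetilde{\phi})$ whose distinguished monodromy is precisely the standard lift $\widetilde{\phi_L}\in\MCG(\widetilde{S},\widetilde{P})$, i.e.\ $\widetilde{L}$ arises from the marked set $\widetilde{P}$ exactly as $L$ arises from $P$. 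Finally, by definition of a contact branched covering, the restriction $\pi:\widetilde{M}\setminus\widetilde{\mathcal{T}}\to M\setminus\mathcal{T}$ is the honest connected $d$-fold covering corresponding to $\textrm{Ker}\,e_d$, and $\widetilde{\xi}|_{\widetilde{M}\setminus\widetilde{\mathcal{T}}}$ is the pull-back of $\xi|_{M\setminus\mathcal{T}}$.

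Next I would apply Theorem~\ref{theorem:non-Nrv-OT}(2) to the open book $(\widetilde{S},\widetilde{\phi})$, the closed braid $\widetilde{L}=\widetilde{\mathcal{T}}$, and its distinguished monodromy $\widetilde{\phi_L}$. Since $\widetilde{\phi_L}$ is $(N,0)$-twist left-veering, the theorem produces an overtwisted disk $D\subset\widetilde{M}$ meeting the binding $B_{(\widetilde{S},\widetilde{\phi})}$ in $N$ points and meeting $\widetilde{L}$ in $0$ points; equivalently $\widetilde{L}=\widetilde{\mathcal{T}}$ is loose in $(\widetilde{M},\widetilde{\xi})$. Thus $D\subset\widetilde{M}\setminus\widetilde{\mathcal{T}}$ is an overtwisted disk for $\widetilde{\xi}|_{\widetilde{M}\setminus\widetilde{\mathcal{T}}}$, which is the pull-back of $\xi|_{M\setminus\mathcal{T}}$ along a finite covering. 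Hence some finite cover of $M\setminus\mathcal{T}$ is overtwisted, i.e.\ $\mathcal{T}$ is virtually loose.

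The argument brings in no new geometric input beyond Theorem~\ref{theorem:non-Nrv-OT}; the load-bearing points are bookkeeping facts already available in \cite{IK-branch, Geiges, ON}: that the branched cover of the page open book is an open book for the branched cover of $(M,\xi)$, that the standard lift $\widetilde{\phi_L}$ is the distinguished monodromy of the preimage braid $\widetilde{\mathcal{T}}$, and that $\widetilde{\xi}$ restricts to the pull-back of $\xi$ off the branch locus. The one feature of the hypothesis that is genuinely used is that the boundary based region $R(\Gamma,\widetilde{\phi}(\Gamma))$ be non-punctured, i.e.\ of type $(N,0)$: this is exactly what forces the overtwisted disk of Theorem~\ref{theorem:non-Nrv-OT} to be disjoint from $\widetilde{\mathcal{T}}$, so that it survives as an overtwisted disk in the complement $\widetilde{M}\setminus\widetilde{\mathcal{T}}$. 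So the only real care needed is to verify that the lifted data $(\widetilde{S},\widetilde{\phi},\widetilde{L},\widetilde{\phi_L})$ satisfies the hypotheses of Theorem~\ref{theorem:non-Nrv-OT}(2) and to track what its conclusion says back downstairs.
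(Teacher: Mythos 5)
Your proposal is correct and follows essentially the same route as the paper: lift to the standard cyclic branched cover, note that $\widetilde{L}=\pi^{-1}(L)$ is a closed braid representing $\widetilde{\mathcal{T}}$ with distinguished monodromy $\widetilde{\phi_L}$, invoke Theorem~\ref{theorem:non-Nrv-OT}(2) with $k=0$ to get looseness of $\widetilde{L}$ upstairs, and conclude virtual looseness of $\mathcal{T}$. The additional discussion you include about the contact branched covering structure is accurate bookkeeping that the paper leaves implicit.
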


\begin{proof}
The lift $\widetilde{L}:=\pi^{-1}(L)$ is a closed braid representative of $\widetilde{\mathcal{T}} = \pi^{-1}(\mathcal{T})$ and its distinguished monodromy is $\widetilde{\phi_L}$. 
If $\widetilde{\phi_L}$ is twist-left-veering with a non-punctured boundary based region then by Theorem~\ref{theorem:non-Nrv-OT} we know that $\widetilde{L}$ is loose. 
Thus $\mathcal{T}$ is virtually loose. 
\end{proof}

The above lemma motivates us to ask when the standard lift $\widetilde{\phi_L}$ becomes twist-left-veering. 
To this end, we extend the right-veering orderings to a slightly bigger set.

\begin{definition}
A \emph{$(\partial+P)$-arc} $\gamma$ is an oriented properly embedded arc in $S\setminus P$ with the starting point $\gamma(0)\in \partial S$ and the end point $\gamma(1) \in \partial S\cup P$. 

A \emph{$(\partial+P)$-arc system} $\gamma=(\gamma_1,\ldots,\gamma_N)$ is defined similarly with the set of endpoints 
$\Gamma(1)\subset \partial S \cup P$. For a set of $N$ boundary points $\B \subset \partial S$ let $\AP$ denote the set of $(\partial+P)$-arc systems that start at $\B$. 

The orderings $\prec_{\ri}$ and $\ll_{\ri}$, and the relation $\disj$ on $\A$ can be extended to $\AP$ and denoted by $\prec^{\partial+P}_{\ri}$, $\ll^{\partial+P}_{\ri}$, and $\disjp$.
\end{definition}

\begin{remark}
By the definition $\A \subset \AP$.

For $\Gamma,\Gamma' \in \A \subset \AP$,
we have 
$\Gamma \prec_{\ri} \Gamma'$ if and only if $\Gamma \prec^{\partial+P}_{\ri} \Gamma'$.
However, in general $\Gamma \ll_{\ri}^{\partial+P} \Gamma'$ does not imply $\Gamma \ll_{\ri} \Gamma'$ since  $\Gamma \disjp  \Gamma'$ does not imply $\Gamma \disj \Gamma'$. 
\end{remark}

As stated in Proposition \ref{prop:Robstruction}, when the boundary based region $R(\Gamma,\Gamma')$ is nonempty then $\Gamma \not \ll_{\ri} \Gamma'$. This is not the case for $\ll_\ri^{\partial+P}$: 

\begin{lemma}
If a boundary based region $R(\Gamma,\Gamma')$ is embedded (i.e., its interior does not intersect $\Gamma$) and contains a puncture point, then $\Gamma \ll_{\ri}^{\partial+P} \Gamma'$ (see Figure \ref{Fig:Ptrick}).
\end{lemma}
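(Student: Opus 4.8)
The plan is to exhibit an explicit interpolating sequence for the relation $\disjp$ that ``passes through the puncture.'' The key point is that, unlike an ordinary arc, a $(\partial+P)$-arc is allowed to terminate at a puncture point, and this extra freedom lets us slip past the obstruction described in Proposition~\ref{prop:Robstruction}. So the first thing I would do is set up coordinates near the region $R=R(\Gamma,\Gamma')$: write $\Gamma=(\gamma_1,\dots,\gamma_N)$, $\Gamma'=(\gamma'_1,\dots,\gamma'_N)$, with base corners $v_i=\gamma_i(0)$ and non-base corners $q_i\in\gamma'_i\cap\gamma_{i+1}$, exactly as in Definition~\ref{def-of-R}, and fix a puncture $p\in P\cap\Int(R)$.

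Next I would construct the two intermediate $(\partial+P)$-arc systems. Let $\Gamma''=(\gamma''_1,\dots,\gamma''_N)$ be defined by running $\gamma''_i$ along $\gamma_i$ from $v_i$ until it is near $q_{i-1}$ and then diverting it into the interior of $R$ to terminate at the puncture $p$ (for $i=i_0$, say, one chosen index) while for the remaining indices $\gamma''_i$ is a small push-off of $\gamma_i$ to the right that still terminates on $\partial S$. Because $\Int(R)\cap\Gamma=\emptyset$ (the hypothesis that $R$ is embedded), these arcs can be chosen pairwise disjoint and meeting $\Gamma$ only at $\mathcal B$, so $\Gamma\disjp\Gamma''$. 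Then let $\Gamma'''$ be obtained from $\Gamma'$ by similarly pulling the arc $\gamma'_{i_0}$ (or the appropriate one with $q_{i_0}$ on its boundary) back to end at $p$ from the other side of the region, with all other arcs small push-offs of $\Gamma'$ to the left; again embeddedness of $R$ gives $\Gamma'''\disjp\Gamma'$. Finally, $\Gamma''$ and $\Gamma'''$ both terminate at $p$ along the two sides of the now-punctured bigon cut out of $R$, and since that bigon has a puncture it is not a disk, so one checks directly that $\Gamma''\disjp\Gamma'''$ (this is exactly the local $N=1$ ``$P$-trick'' of Figure~\ref{Fig:Ptrick}, applied arc-by-arc). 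Chaining $\Gamma\disjp\Gamma''\disjp\Gamma'''\disjp\Gamma'$ gives $\Gamma\ll_\ri^{\partial+P}\Gamma'$.

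The step I expect to be the main obstacle is verifying that all the diverted and pushed-off arcs can simultaneously be made pairwise disjoint within each of the systems $\Gamma''$ and $\Gamma'''$, and that the orientations line up so that each consecutive pair is genuinely related by $\disjp$ (not merely $\prec_\ri^{\partial+P}$). The hypothesis that $R$ is \emph{embedded} is what makes this work: since $\Int(R)$ is disjoint from $\Gamma$, the portion of each $\gamma_i$ bounding $R$ is an honest boundary edge, so the diverting arcs into the puncture stay inside $R\setminus(P\setminus\{p\})$ and never collide with the other arcs or with $\Gamma$. One must also take care that when only one index $i_0$ is diverted to $p$, the remaining $N-1$ arcs, being small push-offs, do not create spurious intersections with the diverted arc; this is a routine transversality/innermost-disk argument once the picture is set up, and is the natural analogue of the genus/boundary case analysis in the proof of Theorem~\ref{thm:R=A}. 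Everything else is bookkeeping with the definitions of $\disjp$ and $\ll_\ri^{\partial+P}$.
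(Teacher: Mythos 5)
The essential gap is in the middle step $\Gamma'' \disjp \Gamma'''$. As you have set things up, each of $\Gamma''$ and $\Gamma'''$ contains a $(\partial+P)$-arc terminating at the puncture $p$, so $p \in \Gamma'' \cap \Gamma'''$ and hence $\Gamma'' \cap \Gamma''' \supsetneq \B$. This violates the definition of $\disjp$, which extends $\disj$ of Definition~\ref{def:SO} and therefore requires $\Gamma'' \cap \Gamma''' = \B$, not merely $\Int(\Gamma'') \cap \Int(\Gamma''') = \emptyset$; this stricter convention is exactly the one the paper flags in the remark following Definition~\ref{def:<<single-arc}, and unlike a shared endpoint on $\partial S$, a shared endpoint at a puncture cannot be perturbed apart while remaining a $(\partial+P)$-arc to $p$. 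Worse, if $R$ contains only the single puncture $p$, the two arcs you send to $p$ from opposite sides bound a sub-bigon inside the embedded disk $R$ whose interior contains \emph{no} puncture ($p$ is a vertex of that bigon, not an interior point), so those two arcs are isotopic; your justification ``since that bigon has a puncture it is not a disk'' confuses a corner of the bigon with a puncture in its interior.

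The correct ``$P$-trick'' uses only one intermediate arc system that ends at $p$. Take $\Gamma_1$ to be the system sending one arc into $R$ to terminate at $p$ (this is your $\Gamma''$, and $\Gamma \disjp \Gamma_1$ holds exactly as you argue). But the next system $\Gamma_2$ must \emph{not} end at $p$: instead its corresponding arc enters $R$ strictly to the right of the $\Gamma_1$-arc at $p$, winds around $p$ — this is precisely what the puncture buys you, since an arc terminating at $p$ no longer separates the two sides of $R$ near $p$ — and then exits $R$ through the $\Gamma$-side of $\partial R$ (it must avoid the $\Gamma'$-side because $\Gamma_2 \disjp \Gamma'$ requires $\Gamma_2 \cap \Gamma' = \B$), ending on $\partial S$ close to the base point. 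With this corrected middle system the chain $\Gamma \disjp \Gamma_1 \disjp \Gamma_2 \disjp \Gamma'$ does go through, and the embeddedness of $R$ plays the role you identified, keeping the in-$R$ portions of these arcs off $\Gamma$.
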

\begin{figure}[htbp]
\includegraphics*[width=85mm, bb=
176 550 434 713]{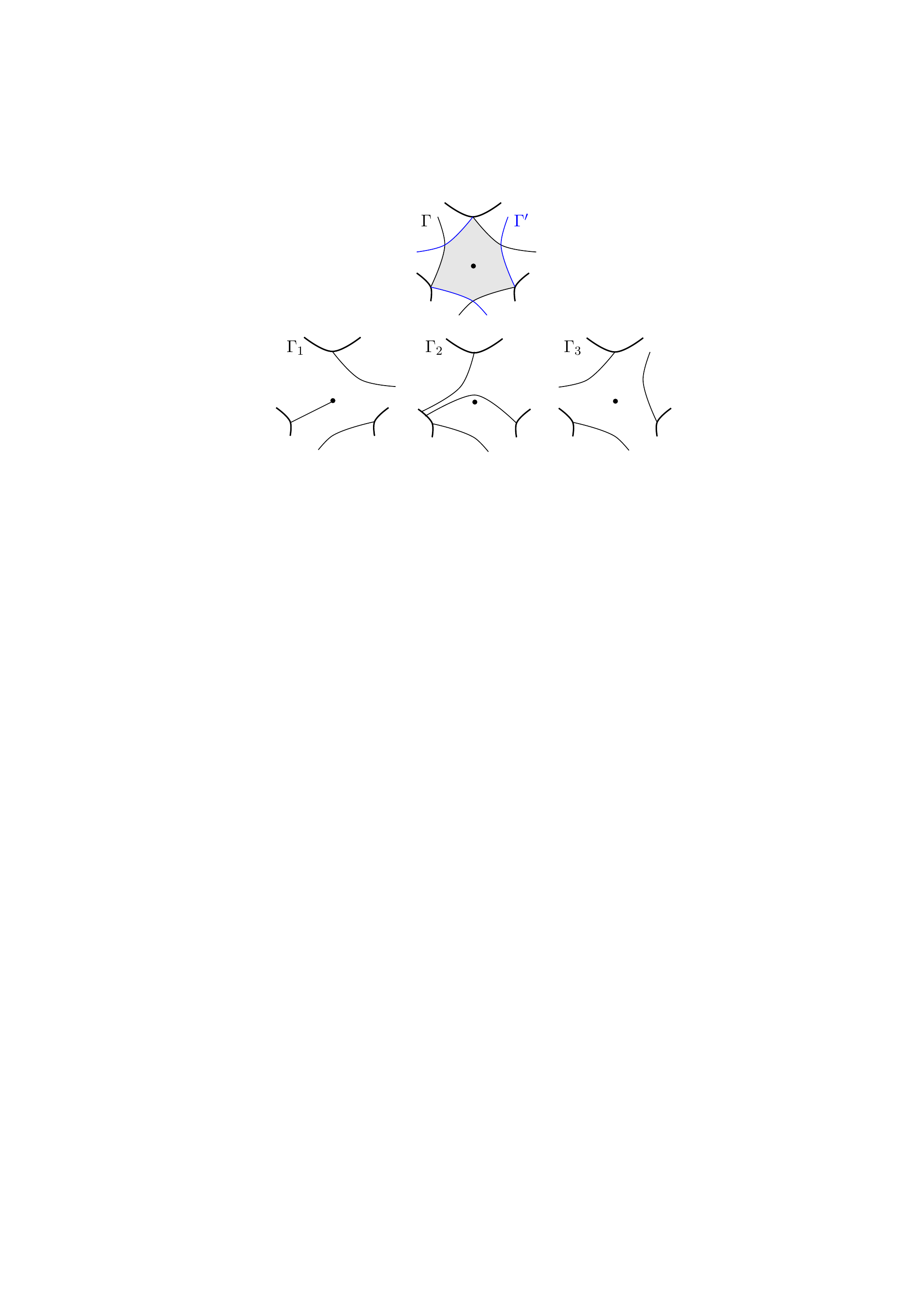}
\caption{Embedded boundary based region with puncture gives $\Gamma \ll_{\ri}^{\partial+P} \Gamma'$: $\Gamma \disjp \Gamma_1 \disjp \Gamma_2 \disjp \Gamma_3=\Gamma'$} 
\label{Fig:Ptrick}
\end{figure}

Let $\pi_S=\pi^{(d)}_{S}: (\widetilde{S}, \widetilde P) \rightarrow (S, P)$ be the standard $d$-fold cyclic branched covering for $d\geq 2$.
For each base point $v_i \in \mathcal{B}$ we choose a lift $\widetilde{v_i} \in \pi^{-1}_S(v_i) \subset \partial \widetilde{S}$ and define a set of base points $\widetilde{\mathcal{B}}=\{\widetilde{v_1},\ldots,\widetilde{v_N}\}$ for $\widetilde{S}$.
For an $N$-arc system $\Gamma=\{\gamma_1,\ldots,\gamma_N\} \in \A$, 
we denote by $\widetilde{\Gamma}=\{\widetilde{\gamma_1},\ldots,\widetilde{\gamma_N}\} \subset \mathcal{A}_{\widetilde{B}}(\widetilde{S},\widetilde{P})$ the $N$-arc system such that $\widetilde{\gamma_i}$ is the lift of $\gamma_i$ with $\widetilde{\gamma_i}(0)=\widetilde{v_i}$.

The following proposition clarifies the relation among coverings and orderings $\ll_{\ri}$ and $\ll_{\ri}^{\partial +P}$.

\begin{proposition}
\label{prop:order-lift}
For $\Gamma,\Gamma' \in \A$, if $\Gamma \ll_{\ri}^{\partial+P} \Gamma'$ in $\AP$ then there exists a standard $d$-fold cyclic branched cover for some $d$ such that 
$\widetilde{\Gamma} \ll_{\ri} \widetilde{\Gamma'}$ in $\mathcal{A}_{\widetilde{B}}(\widetilde{S},\widetilde{P})$. 
\end{proposition}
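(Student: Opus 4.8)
The plan is to analyze how an elementary step $\Gamma \disjp \Gamma'$ in the definition of $\ll_\ri^{\partial+P}$ behaves under the standard branched covering, and to show that each such step can be ``resolved'' in a sufficiently large cover into a genuine sequence of $\disj$ steps. Since $\ll_\ri^{\partial+P}$ is by definition a finite concatenation $\Gamma = \Gamma_0 \disjp \Gamma_1 \disjp \cdots \disjp \Gamma_k = \Gamma'$, and since a composite of two standard cyclic branched covers is dominated by a single standard cyclic branched cover (take $d$ to be a common multiple of the degrees, using that $\textrm{Ker}\,e'_d \subseteq \textrm{Ker}\,e'_{d'}$ whenever $d' \mid d$), it suffices to treat a single step: given $\Gamma \disjp \Gamma'$ in $\AP$ with $\Gamma, \Gamma' \in \A$, produce $d$ with $\widetilde{\Gamma} \ll_\ri \widetilde{\Gamma'}$ in $\mathcal{A}_{\widetilde{B}}(\widetilde{S}, \widetilde{P})$.

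First I would unpack what $\Gamma \disjp \Gamma'$ says: $\Gamma \prec_\ri \Gamma'$ (equivalently $\prec_\ri^{\partial+P}$, by the preceding remark) and $\Gamma \cap \Gamma' = \B$. The obstruction to $\Gamma \disj \Gamma'$ holding downstairs is precisely that some arc $\gamma_i$ may end on a puncture $p \in P$ rather than on $\partial S$, so when the orientation-reversed arc $\underline{\gamma_i}$ is taken into account the two arc systems are not ``almost disjoint'' in the $\A$-sense — the relevant complementary component between $\gamma_i$ and $\gamma'_i$ near the base point may be a once-punctured disk rather than a positive-genus or boundary-containing region. The key local picture is exactly the one in Figure~\ref{Fig:Ptrick}: an embedded boundary based region containing a puncture, which obstructs $\ll_\ri$ but is permitted for $\ll_\ri^{\partial+P}$. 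Upstairs, however, a puncture point $p$ of ramification index $e \mid d$ is covered by $d/e$ branch points, and a loop $c_i$ around $p$ lifts (when $e>1$) to an arc running between distinct preimages — so the once-punctured disk in $S$ is covered by a region in $\widetilde S$ of positive complexity (more punctures, or in the branched-cover picture a disk whose boundary now meets several branch points), and Honda–Kazez–Matić's algorithm for interpolating arc systems through such a region applies.

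Concretely, the main step is: interpolate $\Gamma$ to $\Gamma'$ by a finite sequence $\Gamma = \Lambda_0 \prec_\ri \Lambda_1 \prec_\ri \cdots \prec_\ri \Lambda_m = \Gamma'$ of $N$-arc systems in $\A$ with consecutive ones $\disjp$-related and ``locally model'' each pair $\Lambda_j \disjp \Lambda_{j+1}$ by either (a) a $\disj$ step already in $\A$, or (b) the punctured-bigon model of Figure~\ref{Fig:Ptrick}. For the steps of type (b), choose $d$ divisible by the ramification needed so that each puncture involved lifts non-trivially; then lift the whole sequence, arc-by-arc, to $\widetilde S$ (using the chosen base point lifts $\widetilde{v_i}$), and invoke the lemma preceding this proposition — an embedded boundary based region with a puncture gives $\widetilde{\Lambda_j} \ll_\ri \widetilde{\Lambda_{j+1}}$ upstairs — together with transitivity of $\ll_\ri$ to conclude $\widetilde\Gamma \ll_\ri \widetilde{\Gamma'}$. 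The main obstacle I anticipate is the bookkeeping of lifting arc \emph{systems}, not just single arcs: one must check that the lifts of the interpolating arc systems remain genuinely \emph{pairwise disjoint} $N$-arc systems in $\widetilde S$ (Example~\ref{example:prec-does-not-imply-ll} shows this is not automatic for systems), and that a single $d$ can be chosen to work simultaneously for all finitely many steps and all the punctures they involve; handling the case where an arc ends on $\partial S$ but the complementary region between $\gamma_i$ and $\gamma'_i$ still encloses punctures (so the region is punctured but not a bigon) requires first decomposing that region using the boundary and then applying the punctured-bigon trick to the remaining pieces.
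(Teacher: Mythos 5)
Your reduction to a single step $\Gamma \disjp \Gamma'$ with both $\Gamma, \Gamma' \in \A$ is vacuous and misses where the difficulty actually lies. For arc systems that both live in $\A$, the relations $\disjp$ and $\disj$ coincide (the arcs have no puncture endpoints, and the definitions of $\prec_\ri$ and of ``intersecting only in $\B$'' are the same whether or not puncture-endpoints are allowed), so such a single step already gives $\Gamma \disj \Gamma'$, hence $\Gamma \ll_\ri \Gamma'$ downstairs with no cover needed. The content of the proposition is entirely about the \emph{intermediate} $(\partial+P)$-arc systems $\Gamma_1, \dots, \Gamma_{k-1}$ in the interpolating chain, whose arcs may genuinely end at punctures of $P$: those cannot be lifted to elements of $\mathcal{A}_{\widetilde{\B}}(\widetilde{S}, \widetilde{P})$ at all, because their lifts end at branch points of $\widetilde{S}$ rather than at $\partial\widetilde{S}$. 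Your single-step framing discards exactly these systems. Moreover, your type-(b) ``resolve a punctured bigon in the cover'' idea would fail even in the form you intend: a $d$-fold cyclic cover branched over a point inside an embedded once-punctured boundary based region again covers that region by a once-punctured disk (the branched cover of a disk branched over one interior point is a disk, with the branch point as its one marked point), so by Proposition~\ref{prop:Robstruction} the obstruction to $\ll_\ri$ persists upstairs unchanged. The lemma immediately before the proposition only upgrades such a region to $\ll_\ri^{\partial+P}$, not to $\ll_\ri$.

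The paper's mechanism is quite different and you should look at it: keep the full interpolating chain $\Gamma_0 \disjp \Gamma_1 \disjp \cdots \disjp \Gamma_k$ and, for each intermediate $\Gamma_i$, replace any arc $\gamma_i^j$ that ends at a puncture $p$ by the ordinary arc $\gamma_i^{*\,j} = \gamma_i^j \ast c_p \ast \overline{\gamma_i^j}$ (run out to $p$, encircle $p$ once clockwise, run back, and land slightly to the right of $v_j$ on $\partial S$). This produces $\Gamma_i^* \in \A$ that agree with $\Gamma$ and $\Gamma'$ at the ends of the chain. The price is that consecutive $\Gamma_i^*, \Gamma_{i\pm 1}^*$ now intersect near the base points, so the chain is not a $\disj$-chain in $S$. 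But precisely because the new arcs wind once around a puncture, their lifts to a standard $d$-fold branched cover with $d \geq 2$ come apart onto different sheets, killing those intersections, and $\widetilde{\Gamma} = \widetilde{\Gamma_0^*} \disj \widetilde{\Gamma_1^*} \disj \cdots \disj \widetilde{\Gamma_k^*} = \widetilde{\Gamma'}$ holds in $\widetilde{S}$. Note in particular that one fixed $d$ works for the whole chain at once, so your composition-of-covers lemma is not needed.
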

\begin{proof}
Assume that  $\Gamma \ll_{\ri}^{\partial+P} \Gamma'$. Then there is a sequence of $(\partial +P)$-arc systems $\Gamma_1,\ldots,\Gamma_{k-1}$ such that
\[ \Gamma =:\Gamma_0\disjp \Gamma_1 \disjp \cdots \disjp \Gamma_{k-1} \disjp\Gamma_k:=\Gamma'. \]
For each $\Gamma_i=(\gamma_i^{1},\ldots,\gamma_i^{N}) \in \AP$ we construct $\Gamma_i^{*} = (\gamma^{*}_i {}^{1},\ldots,\gamma^{*}_i{}^{N})\in \A$ as follows: 
For $j=1,\ldots, N$, if the endpoint of the $j$-th arc $\gamma_i^{j}$ lies on $\partial S$, then we define $\gamma^{*}_i {}^{j}:=\gamma_i^{j}$. This means when $\Gamma_i \in \A$ then $\Gamma^{*}_i=\Gamma_i$. If the endpoint of the $j$-th arc $\gamma_i^{j}$ is a puncture point $p \in P$ then we define  
\[ \gamma^{*}_i {}^{j}= \gamma_i^{j} \ast c_p \ast \overline{\gamma_{i}^{j}}. \]
where $\overline{\gamma_{i}^{j}}$ is the path $\gamma_{i}^{j}$ with the reversed orientation, $c_p$ is a small loop around the point $p$ clockwise, and $\ast$ means concatenation of paths. 
We slightly move the end point of $\gamma^{* j}_i$ to the right of $v_i$ along the boundary.

By construction, $\gamma^{*}_{i}{}^{j}$ is disjoint from $\gamma^{*j'}_{i\pm 1}$ whenever $j'\neq j$. When $j'=j$ we have $\gamma^{*j}_i \cap \gamma^{*j}_{i\pm 1} \neq \emptyset$ (see the hollowed point near $v_j$ in Figure~\ref{Fig:lift}). 
However this intersection point can be removed after taking lifts of $\gamma^{*j}_i$ and $\gamma^{*j}_{i\pm 1}$. 
Thus, in the covering space $(\widetilde{S},\widetilde{P})$ we have
\[ \widetilde{\Gamma} \disj \widetilde{\Gamma^{*}_1} \disj \cdots \disj \widetilde{\Gamma^{*}_{k-1}} \disj \widetilde{\Gamma'},
\]
which means $\widetilde{\Gamma} \ll_{\ri} \widetilde{\Gamma'}$  in $\mathcal{A}_{\widetilde{B}}(\widetilde{S},\widetilde{P})$. 
\end{proof}
\begin{figure}[htbp]
\includegraphics*[width=45mm, bb=233 603 377 713]{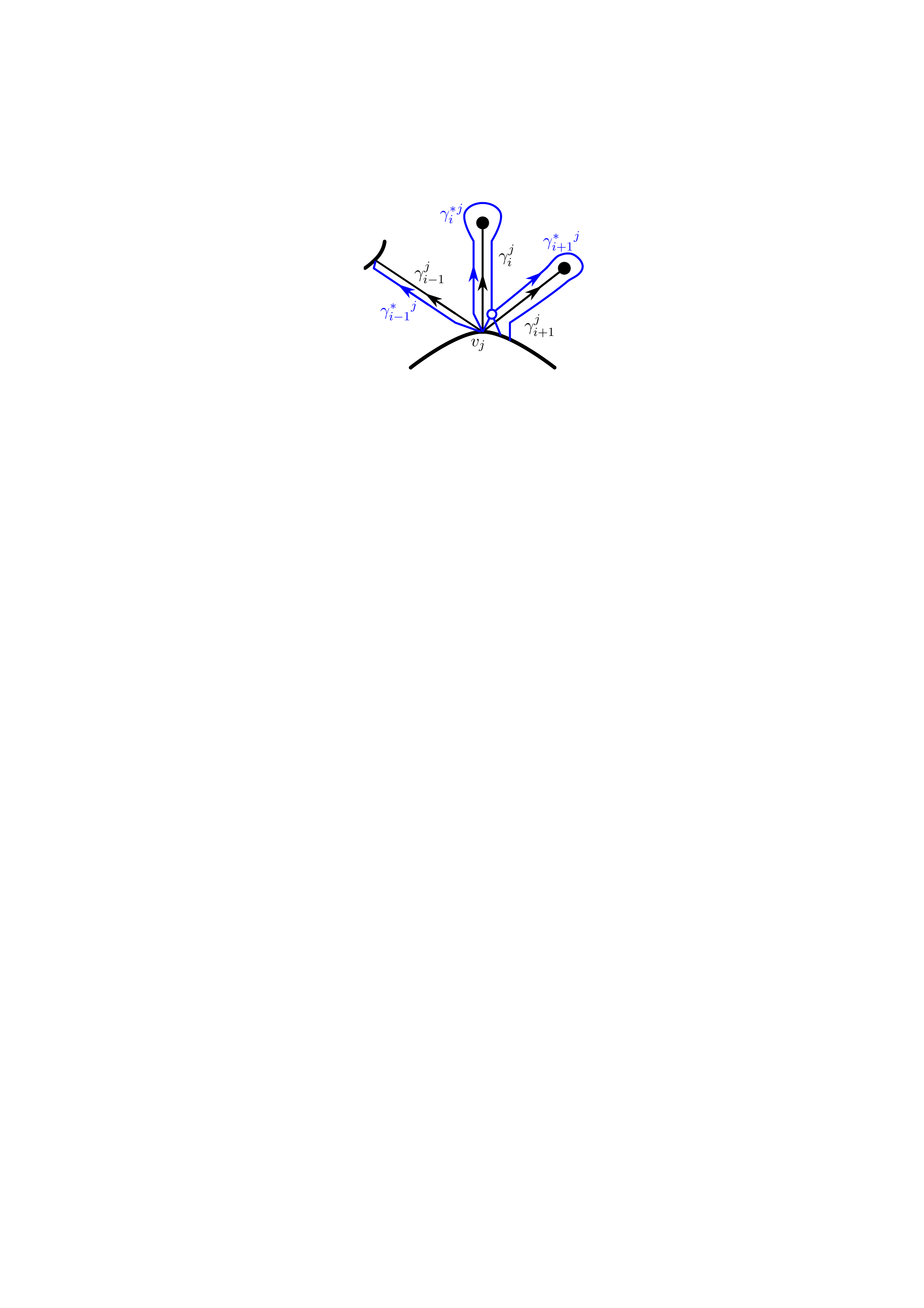}
\caption{Construction of $\gamma^{*j}_i$. The intersection point of $\gamma_i^{*}{}^{j}$ and $\gamma_{i+1}^{*j}$ disappears when we take lifts.} 
\label{Fig:lift}
\end{figure}

\begin{definition}
Let $L$ be a closed braid with respect to an open book $(S, \phi)$.
If there is an $N$-arc system $\Gamma \in \A$ such that 
$$\phi^{tw}_L(\Gamma) \ll^{\partial + P}_{\ri} \Gamma \mbox{ in } \AP$$
and the associated boundary based region $R(\Gamma, \phi_L(\Gamma))$ has type $(N, k)$ then we say that $\phi_L$ is {\em weakly} $(N, k)$-twist left veering. 
\end{definition}

\begin{remark}\label{rem7.6}
When $N=1$, \cite[Corollary 7.6]{IK-qveer} implies that $\gamma \ll^{\partial + P}_{\ri} \gamma'$ if and only if $\gamma \prec_{\ri} \gamma'$. Thus weakly $(1,0)$-twist left-veering is nothing but non-right-veering.
\end{remark}

Theorem~\ref{theorem:non-Nrv-OT} states that if 
$\phi_L$ is $(N, 0)$-twist left veering then $L$ is loose. 
In the next theorem, with a weaker condition we show that $L$ is virtually loose.

\begin{theorem}\label{theorem:virtually-loose}
If $\phi_L$ is weakly $(N, 0)$-twist left-veering 
then $L$ is virtually loose. 
\end{theorem}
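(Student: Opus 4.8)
The plan is to deduce Theorem~\ref{theorem:virtually-loose} from Lemma~\ref{lemma:virtual-loose} by way of Proposition~\ref{prop:order-lift}, essentially by lifting the weak twist-left-veering data to a standard cyclic branched cover where it becomes genuine twist-left-veering. So suppose $\phi_L$ is weakly $(N,0)$-twist left-veering: there is an $N$-arc system $\Gamma\in\A$ with $\phi_L^{tw}(\Gamma)\ll_\ri^{\partial+P}\Gamma$ in $\AP$ and with $R(\Gamma,\phi_L(\Gamma))$ non-punctured (type $(N,0)$). First I would apply Proposition~\ref{prop:order-lift} to the pair $\phi_L^{tw}(\Gamma)\ll_\ri^{\partial+P}\Gamma$ — note both of these lie in $\A$, since $\phi_L^{tw}(\Gamma)$ is built from $\Gamma$ and $\phi_L(\Gamma)$ which have endpoints on $\partial S$ — to obtain a standard $d$-fold cyclic branched cover $\pi_S:(\widetilde S,\widetilde P)\to(S,P)$ with $\widetilde{\phi_L^{tw}(\Gamma)}\ll_\ri\widetilde\Gamma$ in $\mathcal A_{\widetilde{\mathcal B}}(\widetilde S,\widetilde P)$.

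The key remaining point is to identify $\widetilde{\phi_L^{tw}(\Gamma)}$ with $(\widetilde{\phi_L})^{tw}(\widetilde\Gamma)$, i.e.\ to check that lifting commutes with the left-twist operation. This should follow from the fact that the boundary based region $R(\Gamma,\phi_L(\Gamma))$ is non-punctured: since $R$ contains no point of $P$, the branched cover $\pi_S$ is unbranched over $R$, so the preimage $\pi_S^{-1}(R)$ is a disjoint union of copies of $R$, and the copy based at $\widetilde{\mathcal B}$ is exactly the boundary based region $R(\widetilde\Gamma,\widetilde{\phi_L}(\widetilde\Gamma))$ — here I use that $\widetilde{\phi_L}$ is the standard lift, so $\widetilde{\phi_L}(\widetilde\gamma_i)$ is the lift of $\phi_L(\gamma_i)$ starting at an appropriate point. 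Because the left-twist $\phi^{tw}$ is defined locally by the recipe ``walk along $\gamma_i$ to the corner $q_{i-1}$, turn left, follow $\phi(\gamma_{i-1})$'' (Definition~\ref{def:left-twist}), and this recipe takes place entirely inside $R$ and along $\Gamma\cup\phi_L(\Gamma)$, it lifts corner-by-corner: the lifted corners $\widetilde{q_i}$ are the corners of $R(\widetilde\Gamma,\widetilde{\phi_L}(\widetilde\Gamma))$, and concatenating lifted pieces gives $(\widetilde{\phi_L})^{tw}(\widetilde\Gamma)=\widetilde{\phi_L^{tw}(\Gamma)}$. Once this identification is in hand, $\widetilde{\phi_L^{tw}(\Gamma)}\ll_\ri\widetilde\Gamma$ reads $(\widetilde{\phi_L})^{tw}(\widetilde\Gamma)\ll_\ri\widetilde\Gamma$, and the associated boundary based region $R(\widetilde\Gamma,\widetilde{\phi_L}(\widetilde\Gamma))$ is one of the unbranched copies of $R(\Gamma,\phi_L(\Gamma))$, hence still non-punctured with respect to $\widetilde P$; that is, $\widetilde{\phi_L}$ is $(N,0)$-twist left-veering.

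Finally I would invoke Lemma~\ref{lemma:virtual-loose}: the standard lift $\widetilde{\phi_L}$ being $(N,0)$-twist left-veering forces $\mathcal T$ (equivalently $L$) to be virtually loose, since by Theorem~\ref{theorem:non-Nrv-OT} the lifted braid $\widetilde L=\pi^{-1}(L)$ with distinguished monodromy $\widetilde{\phi_L}$ is loose, so the finite cover $\widetilde M\setminus\widetilde{\mathcal T}\to M\setminus\mathcal T$ witnesses virtual looseness. The main obstacle I anticipate is the bookkeeping in the second step: one must be careful that the chosen lifts $\widetilde{v_i}$ of the base points, the standard lift $\widetilde{\phi_L}$, and the lift of the interpolating sequence from Proposition~\ref{prop:order-lift} are all mutually compatible so that the lifted $2N$-gon is honestly the boundary based region for $(\widetilde\Gamma,\widetilde{\phi_L}(\widetilde\Gamma))$ and the lifted left-twist matches $(\widetilde{\phi_L})^{tw}(\widetilde\Gamma)$ — in particular checking that no new intersection points among the lifted arcs spoil efficiency or the non-base-corner count. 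The non-punctured hypothesis on $R$ is exactly what makes this work, since it keeps the branch locus away from the region where all the relevant combinatorics happens.
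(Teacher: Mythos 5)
Your proposal is correct and follows essentially the same route as the paper's proof: lift to a standard cyclic branched cover via Proposition~\ref{prop:order-lift}, use the non-punctured hypothesis on $R(\Gamma,\phi_L(\Gamma))$ to see that lifting commutes with the left-twist so that $\widetilde{\phi_L}$ is $(N,0)$-twist left-veering, and conclude with Lemma~\ref{lemma:virtual-loose}. The only cosmetic difference is that the paper dispatches the $N=1$ case separately by citing Remark~\ref{rem7.6} and \cite[Corollary 5.7]{IK-branch}, whereas your argument treats all $N$ uniformly (which also works, since for $N=1$ the region $R$ is empty and the lifting is trivial).
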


\begin{proof} Let $N=1$.
By Remark~\ref{rem7.6}, $\phi_L$ is non-right-veering. 
Then by \cite[Corollary 5.7]{IK-branch}, $L$ is virtually loose.

Let $N>1$. 
Let $\widetilde{\phi_L}:(\widetilde{S}, \widetilde P)\to(\widetilde{S}, \widetilde P)$ be a standard lift of $\phi_L: (S,P)\to(S,P)$ so that with a suitable lift of the base $\mathcal{B}$,  $\widetilde{\phi_L}(\widetilde{\Gamma})$ and $\widetilde{\Gamma}$ can form a non-punctured boundary based region which is a lift of the non-punctured boundary based region $R(\Gamma, \phi_L(\Gamma))$ and 
we have $\widetilde{\phi_L^{tw}(\Gamma)} = \widetilde{\phi_L}{}^{tw}(\widetilde{\Gamma})$.
Since $\phi^{tw}_L(\Gamma) \ll_{\ri}^{\partial + P} \Gamma$  by Proposition~\ref{prop:order-lift} we obtain $\widetilde{\phi_L}{}^{tw}(\widetilde{\Gamma})=\widetilde{\phi_L^{tw}(\Gamma)} \ll_{\ri} \widetilde{\Gamma}$. Thus, $\widetilde{\phi_L}$ is $(N, 0)$-twist left-veering. By Lemma \ref{lemma:virtual-loose} $L$ is virtually loose.
\end{proof}


\begin{thebibliography}{99}
\bibitem[BO]{BO} K. Baker and S. Onaran,
{\em Nonlooseness of nonloose knots.} 
Algebr. Geom. Topol. 15 (2015), no. 2, 1031-1066.

\bibitem[B]{Ben} D. Bennequin,
{\em Entrelacements et {\'e}quations de Pfaff,} 
Ast{\'e}risque, 107-108, (1983) 87-161.

\bibitem[BF]{BF}
J. Birman and E. Finkelstein,
{\em Studying surfaces via closed braids.} 
J. Knot Theory Ramifications {\bf 7} (1998), no. 3, 267-334. 

\bibitem[BM]{bm2} J. Birman, W. Menasco, 
{\em Stabilization in the braid groups. II. Transversal simplicity of knots.} Geom. Topol. \textbf{10} (2006), 1425-1452. 

\bibitem[E]{Et} 
J. Etnyre,
{\em Lectures on open book decompositions and contact structures}, pp. 103--141 in Floer homology, gauge theory, and low-dimensional topology, edited by D. A. Ellwood et al., Clay Math. Proc. 5, Amer. Math. Soc., Providence, RI, 2006.

\bibitem[EV]{EV}
J. Etnyre and J. Van Horn-Morris, 
{\em Monoids in the mapping class group}, Geom. Topol. Monographs, 19 (2015), 319-365.

\bibitem[FM]{FM}
B. Farb and D. Margalit, 
{\em A primer on mapping class groups.}
Princeton Mathematical Series, 49. Princeton University Press, Princeton, NJ, 2012. xiv+472 pp. 

\bibitem[G]{Geiges}
H. Geiges, {\em Constructions of contact manifolds}. Math. Proc. Cambridge Philos. Soc. 121 (1997), no. 3, 455-464.

\bibitem[Gi]{Giroux} E. Giroux, 
{\em{G\'eom\'etrie de contact: de la dimension trois vres les dimensions sup\'erieures,}}
Proceedings of the International Congress of Mathematics, vol. II (Beijing, 2002), 405-414.

\bibitem[Go]{Goodman} N. Goodman, 
{\em Overtwisted open books from sobering arcs.}
Algebr. Geom. Topol. 5 (2005), 1173-1195. 


\bibitem[HKM]{HKM} K. Honda, W. Kazez, and G. Mati\'{c}, \textit{Right-veering diffeomorphisms of compact surfaces with boundary}, Invent. math. \textbf{169}, No. 2 (2007), 427-449.

\bibitem[IK1]{IK1-1} T. Ito and K. Kawamuro, 
{\em Open book foliation}, Geom. Topol. 18 (2014), 1581--1634.

\bibitem[IK2]{IK1-2} T. Ito and K. Kawamuro, 
{\em Visualizing overtwisted discs in open books,}
Publ. Res. Inst. Math. Sci. 50  (2014), 169--180.


\bibitem[IK3]{IK2} T. Ito and K. Kawamuro, 
{\em Essential open book foliation and fractional Dehn twist coefficient},  Geom. Dedicata 187 (2017), 17--67.

\bibitem[IK4]{IK-cover} T. Ito and K. Kawamuro,
{\em Coverings of open books.} Advances in the mathematical sciences, 139-154, 
Assoc. Women Math. Ser., 6, Springer, 2016. 

\bibitem[IK5]{IK-qveer} T. Ito and K. Kawamuro, 
{{\em Quasi-right-veering braids and non-loose links},}
Algebr. Geom. Topol. 19 (2019) 2989--3032.

\bibitem[IK6]{IK-branch} T. Ito and K. Kawamuro,
{\em On the fractional Dehn twist coefficients of branched coverings}, arXiv:1807.04398.


\bibitem[LM]{LM}
D. LaFountain, and W. Menasco, 
{\em Braid foliations in low-dimensional topology}. 
Graduate Studies in Mathematics, 185. American Mathematical Society, Providence, RI, 2017. xi+289 pp.

\bibitem[OS]{OS}
S. Y. Orevkov and V. V. Shevchishin, 
{\em Markov theorem for transversal links}, J. Knot Theory Ramifications 12 (2003) 905-913. 

\bibitem[ON]{ON}
F. \"Ozt\"urk and K. Niederkr\"uger, {\em Brieskorn manifolds as contact branched covers of spheres}. Period. Math.
Hungar. 54 (2007), 85-97.

\bibitem[P1]{pav1}
E. Pavelescu, 
{\em Braids and open book decompositions}, PhD thesis, University of Pennsylvania (2008) Available at http://search.proquest.com/docview/304492097

\bibitem[P2]{pav} E. Pavelescu,
{\em Braiding knots in contact 3-manifolds,}
Pacific J. Math.,\textbf{253} (2011), 475-487.

\bibitem[TW]{TW}
W. Thurston and H. Winkelnkemper, 
{\em On the existence of contact forms. }
Proc. Amer. Math. Soc. 52 (1975), 345--347. 

\bibitem[Wa]{Wand1}
A. Wand,
{\em Tightness is preserved by Legendrian surgery.}
Ann. of Math. (2) 182 (2015), no. 2, 723--738. 



\end{thebibliography}
\end{document}